\documentclass[12pt,twoside]{amsart}
\usepackage{amssymb}
\usepackage{amscd}
\usepackage[abbrev,alphabetic]{amsrefs}
\usepackage{hyperref}
\usepackage{comment}
\usepackage{array,multirow,tabularx,longtable}
\usepackage{tikz}
\usetikzlibrary{cd}
\usepackage{here}
\usepackage{multirow}
\usepackage[margin=1.25in]{geometry}
\usepackage{mathtools}

\usepackage{framed}
\usepackage{fancybox}
\usepackage{ascmac}

\title[Mukai bundles on prime Fano threefolds]
{Mukai bundles and Brill-Noether generality for prime Fano threefolds in positive characteristic} 

\author{Hiromu Tanaka} 
\subjclass[2020]{14J45, 
14J30, 
14G17
}
\keywords{prime Fano threefolds, Brill-Noether, Mukai bundles, positive characteristic.}
\address{Department of Mathematics, 
Graduate School of Science, 
Kyoto University, 
Kyoto 606-8502, JAPAN} 
\email{tanaka.hiromu.7z@kyoto-u.ac.jp}

\newcommand{\cHom}{\mathcal{H}om}
\newcommand{\cExt}{\mathcal{E}xt}

\DeclareMathOperator{\Ev}{ev}

\newcommand{\ol}{\overline}

\newcommand{\wt}{\widetilde}

\newcommand{\Br}[0]{{\operatorname{Br}}}

\newcommand{\rank}[0]{\operatorname{rank}}

\newcommand{\Ker}[0]{\operatorname{Ker}}

\renewcommand{\Im}[0]{\operatorname{Im}}

\newcommand{\Spec}[0]{\operatorname{Spec}}

\newcommand{\Hom}[0]{{\operatorname{Hom}}}
\newcommand{\Bs}[0]{\operatorname{Bs}}
\newcommand{\Supp}[0]{\operatorname{Supp}}
\newcommand{\Pic}[0]{\operatorname{Pic}}

\newcommand{\Ex}[0]{{\operatorname{Ex}}}

\newcommand{\Frac}[0]{\operatorname{Frac}}

\newcommand{\Gr}[0]{{\operatorname{Gr}}}

\newcommand{\Ext}[0]{{\operatorname{Ext}}}

\newcommand{\Gal}[0]{{\operatorname{Gal}}}

\newtheorem{thm}{Theorem}[section]
\newtheorem{lem}[thm]{Lemma}
\newtheorem*{lem*}{Lemma}

\newtheorem{cor}[thm]{Corollary}

\newtheorem{prop}[thm]{Proposition}

\newtheorem*{claim*}{Claim}         
\newtheorem{step}{Step}

\theoremstyle{definition}

\newtheorem{dfn}[thm]{Definition}

\newtheorem{rem}[thm]{Remark}

\newtheorem{nota}[thm]{Notation}         

\makeatletter
  
  \@addtoreset{equation}{thm}
  \makeatother

\newcommand{\bE}{\mathbb{E}}

\newcommand{\cE}{\mathcal{E}}

\newcommand{\cH}{\mathcal{H}}

\newcommand{\cM}{\mathcal{M}}

\newcommand{\cO}{\mathcal{O}}
\newcommand{\cP}{\mathcal{P}}
\newcommand{\cQ}{\mathcal{Q}}

\newcommand{\cS}{\mathcal{S}}

\newcommand{\cU}{\mathcal{U}}
\newcommand{\cV}{\mathcal{V}}

\newcommand{\cX}{\mathcal{X}}
\newcommand{\cY}{\mathcal{Y}}
\newcommand{\cZ}{\mathcal{Z}}

\newcommand{\U}{\mathcal{U}}

\newcommand{\MO}{\mathcal{O}}

\newcommand{\Q}{\mathbb{Q}}
\newcommand{\Z}{\mathbb{Z}}

\renewcommand{\P}{\mathbb{P}}

\newcommand{\m}{\mathfrak{m}}

\usepackage{listings}
\begin{document}

\maketitle

\begin{abstract}
Let $X$ be a prime Fano threefold of genus $g\geq 8$ in positive characteristic. 
For every nontrivial decomposition $g =r s$, we prove that there exists a unique Mukai bundle on $X$ of type $(r, s)$. 
To this end, we show that 
every smooth hyperplane section of $X$ is Brill-Noether general. 
\end{abstract}


\tableofcontents

\section{Introduction}

The classification of Fano varieties is one of the central themes in algebraic geometry.
In dimension one, the only Fano variety is the projective line $\P^1$.
Fano varieties of dimension two are known as del Pezzo surfaces, and their classification is classical: every del Pezzo surface is isomorphic either to $\P^1 \times \P^1$ or to the blowup of $\P^2$ at no more than eight points in general position.

The study of Fano varieties in dimension three goes back to Fano.
Over an algebraically closed field of characteristic zero, the classification of Fano threefolds was completed by Mori and Mukai
\cite{MM81}, \cite{MM83}, \cite{MM03},
building on earlier work of Iskovskih and Shokurov
\cite{Isk77}, \cite{Isk78}, \cite{Sho79a}, \cite{Sho79b}; 
see also \cite{IP99}, \cite{Tak89}.
Subsequently, Mukai announced explicit geometric descriptions of prime Fano threefolds
\cite{Muk89}.
These descriptions were later established by Bayer--Kuznetsov--Macri
\cite{BKM24}, \cite{BKM25}.

\begin{dfn} Let $k$ be an algebraically closed field. 
We say that $X$ is a {\em prime Fano threefold} over $k$ if $X$ is a smooth projective threefold over $k$ 
such that $-K_X$ is ample and $\Pic X$ is generated by $\omega_X$. The positive integer $g$ defined by $(-K_X)^3 = 2g-2$ is called the {\em genus} of $X$. 
\end{dfn}

In positive characteristic, the Mori--Mukai classification has been established by Asai and the author 
\cite{FanoI}, \cite{FanoII}, \cite{FanoIII}, \cite{FanoIV}.
This naturally raises the problem of obtaining a positive-characteristic counterpart of Mukai's description of prime Fano threefolds.
The aim of the present paper, together with its sequel 
\cite{KT-primitive}, is to establish such a description in positive characteristic.
The main result of this paper is the existence of Mukai bundles on prime Fano threefolds, which provides the key step toward establishing Mukai's description in positive characteristic.


\begin{thm}[Theorem \ref{t Mukai bdl X}, 
Proposition \ref{p Mukai bdl stable}, Proposition \ref{p uniqueness Mukai bdl}]\label{intro Mukai bdl}
Let $k$ be an algebraically closed field of characteristic $p>0$. 
Let $X$ be a prime Fano threefold over $k$ 
of genus $g \geq 8$. 
Take integers $r$ and $s$ satisfying $r \geq 2$, $s \geq 2$, 
and $g =rs$. 
Then there exists a $\mu_{-K_X}$-stable vector bundle $\cU_X$ on $X$ such that 
\begin{enumerate}
\item 
$\rank(\cU_X) = r$, $c_1(\cU_X) = K_X$, 
\item 
$H^j(X,\cU_X) = 0$ for every $j \geq 0$, 
\item 
$\cU_X^\vee := \cHom_{\MO_X}(\cU_X, \MO_X)$ is globally generated, and 
\item 
$\dim H^0(X, \cU_X^\vee)=r+s$, $H^i(X, \cU_X^\vee) = 0$ for every $i>0$.
\end{enumerate}
Furthermore, such a vector bundle is unique up to isomorphisms. 
\end{thm}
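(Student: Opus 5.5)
The plan follows Mukai's strategy, in three steps: build the bundle on a K3 hyperplane section, extend it to $X$, then prove stability and uniqueness. Two facts about characteristic $p$ are assumed available from earlier work on prime Fano threefolds: $|-K_X|$ is very ample and a general member $S \in |-K_X|$ is a smooth K3 surface. Then $C := S \cap H$, for a general $H \in |-K_X|$, is a smooth canonical curve of genus $g$.

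\emph{Step 1: the bundle on $S$.} We first show that every smooth $S\in|-K_X|$ is Brill--Noether general, i.e.\ $h^0(L)h^0(-K_X|_S - L) < h^0(-K_X|_S)$ for every nontrivial decomposition. Suppose $L$ violates this. Then, after replacing $L$ by a suitable saturation, $L$ or $-K_X|_S - L$ gives an elliptic pencil or a low-degree divisor on $S$. The plan is to rule this out using the fact that $\Pic X = \Z K_X$ together with a Lefschetz-type argument. Concretely:
\begin{itemize}
\item if the bad divisor moves in a pencil on $S$, deform $S$ in $|-K_X|$;
\item the divisor then sweeps out a surface on $X$, or forces a curve of low degree through a general point;
\item this contradicts the known geometry of lines and conics on $X$ for $g\ge 8$ (for instance, that conics cover only a surface, and that $X$ contains no plane or quadric).
\end{itemize}
Given Brill--Noether generality, the characteristic-free Lazarsfeld--Mukai construction gives a rank $r$ bundle $E_S$ on $S$ with $\det E_S^\vee = -K_X|_S$, $h^0(E_S^\vee)=r+s$, $E_S^\vee$ globally generated, higher cohomology of $E_S^\vee$ vanishing, $\chi(E_S,E_S)=2$, and $E_S$ simple. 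Concretely, take a $g^{r-1}_d$ on $C$ of the correct degree $d$, whose existence follows from Brill--Noether theory, and dualize the kernel of the evaluation map. Rigidity ($v^2=-2$) together with Mukai's argument, which is valid in positive characteristic since it uses only Riemann--Roch and Serre duality on K3 surfaces, gives stability and uniqueness of $E_S$.

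\emph{Step 2: extension to $X$.} Consider the exact sequences $0\to \cF(-1)\to \cF\to \cF|_S\to 0$ with $\cO_X(1)=\cO_X(-K_X)$. Since $E_S$ is rigid and $H^2(S,\mathcal{E}nd\,E_S)=0$, the obstruction to extending $E_S$ to successive infinitesimal neighbourhoods vanishes provided $\Ext^2_X$-type groups vanish. These reduce to $H^1(S, \mathcal{E}nd(E_S)(-m))$ for $m\ge1$, and the plan is to prove this vanishing from stability of $E_S$ restricted to curves together with Serre duality. This yields a formal extension, and hence, by Grothendieck existence followed by a Lefschetz-type algebraization (legitimate since $\dim X=3$ and $S$ is ample), an actual bundle $\cU_X^\vee$ on $X$ with $\cU_X^\vee|_S\cong E_S^\vee$. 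Alternatively, and probably more cleanly, we can run Mukai's direct construction on $X$: apply the Serre correspondence to an elliptic-type curve, or take a kernel of an evaluation map of $H^0$ of an ideal sheaf twisted suitably, and check that its restriction to $S$ is $E_S$. I expect this step, existence on $X$, to be the main obstacle. The first attempt will be the deformation/extension route, with the needed vanishings $H^1(S,E_S^\vee(-m))=0$ and $H^1(S,\mathcal{E}nd(E_S)(-m))=0$ derived from Kodaira-type vanishing on the K3 surface. That vanishing holds in every characteristic for K3 surfaces via Riemann--Roch and Bogomolov-type arguments for stable bundles.

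\emph{Step 3: properties, stability, uniqueness.} Properties (1), (2) and (4) follow by restricting to $S$ and applying Kodaira vanishing on $X$ through the sequences above. Kodaira vanishing holds for prime Fano threefolds in positive characteristic by the earlier work. Global generation in (3) lifts from $S$ because $H^1(X,\cU_X^\vee(-1))=0$. Stability holds because a destabilizing subsheaf would restrict to a destabilizing subsheaf of the stable bundle $E_S$; this uses $\Pic X=\Z K_X$ to control first Chern classes. For uniqueness, given two such bundles $\cU,\cU'$, both restrictions to $S$ are isomorphic to $E_S$ by uniqueness on the K3 surface. The vanishing $H^1(X,\cHom(\cU,\cU')(-1))=0$ then lifts the isomorphism to a nonzero map $\cU\to\cU'$, and stability of bundles with equal slope and rank forces it to be an isomorphism.
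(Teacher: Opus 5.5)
\textbf{There are genuine gaps, in both places where positive characteristic matters.}

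\textbf{Existence on $X$.} Your extension step fails as written. The obstruction to extending $E_S$ from the $n$-th to the $(n+1)$-st infinitesimal neighbourhood of $S$ in $X$ lies in $H^2(S,\mathcal{E}nd(E_S)(-m))$ with $m=n+1$, not in $H^1$. By Serre duality on the K3 surface this group is $H^0(S,\mathcal{E}nd(E_S)(m))^\vee$. It contains $H^0(S,\MO_S(mH))^\vee\neq 0$ via $\MO_S\subset\mathcal{E}nd(E_S)$, so no vanishing theorem can kill these groups. Moreover, a formal extension would a priori only algebraize to a reflexive sheaf on $X$. Your fallback ``direct construction'' is not specified.

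The missing idea is to build $\cU_X$ one level up, as the elementary transform
$F_{X,S,M}=\Ker\bigl(H^0(S,M)\otimes\MO_X\to\MO_S(M)\bigr)$
of a line bundle $M$ on some (possibly $A_1$-singular) $S\in|-K_X|$ with $M\cdot H=(r-1)(s+1)$, $h^0(M)=r$, $H^1(M)=H^2(M)=0$ and $|M|$ base point free. Properties (1)--(4) are then formal. The real work is producing such a pair $(S,M)$ in characteristic $p$:
\begin{enumerate}
\item the paper lifts $X$ to $W(k)$;
\item it uses the characteristic-zero result of Bayer--Kuznetsov--Macr\`i to get a special Mukai divisor $M_Q$ on a member of a lifted very general Lefschetz pencil;
\item it replaces the resulting family of K3 surfaces over $R$ by a factorial model via a mixed-characteristic MMP, so that the closure of $M_Q$ is Cartier and specializes;
\item it uses Brill--Noether generality of the special fibre, together with semicontinuity, to force $h^0(M)=r$ and $H^1(M)=0$.
\end{enumerate}

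\textbf{Brill--Noether generality.} This is the main new theorem needed, and your step has no working mechanism. Violating divisors on different members $S$ need not assemble into a surface on $X$ or into low-degree curves. You also cannot arrange $\Pic S=\Z H$ by choosing $S$ general: every smooth hyperplane section of the characteristic-$3$ Fermat quartic threefold is supersingular. The same defect undermines your stability and uniqueness of $E_S$ on a closed-point $S$, which genuinely needs $\rho(S)=1$.

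The paper instead works on the generic member $T$ of the pencil through $C$, base changed to $\kappa=k(t^{1/p^\infty})$. There $\Pic T=\Z H$, by Lefschetz for generic members plus primitivity of $-K_X|_S$; primitivity is itself deduced from the weak inequality $h^0(M)h^0(H-M)\le g+1$. Consequently $F_{T,C,A}$ is $\mu$-stable, and its endomorphism ring is a field because $\kappa$ is a perfect $C_1$-field. Over $\ol{\kappa}$ it splits into Galois-conjugate simple summands with $H=H_1+\cdots+H_n$, and a numerical argument using the weak inequality excludes this.

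\textbf{Uniqueness and stability.} The paper proves uniqueness by restricting to the generic member of $|-K_X|$, which has Picard rank one over its function field. It then lifts the resulting isomorphism through the map $\cU_1^\perp\to\cU_2^\vee$. That map vanishes on $S$, hence factors through $\cU_2^\vee(K_X)$, and is killed by a slope comparison. Your route instead needs $H^1(X,\cHom(\cU,\cU')(-1))=0$, which you assert without proof. Finally, stability of $\cU_X$ needs no restriction to $S$ at all: $\Pic X=\Z K_X$, $H^0(\cU_X)=0$, and generic global generation of $\cU_X^\vee$ already force it.
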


To prove the existence of Mukai bundles, we establish the following Brill-Noether generality result: 

\begin{thm}[Theorem \ref{t main2}]\label{intro BN}
Let $k$ be an algebraically closed field of characteristic $p>0$. 
Let $X$ be a prime  Fano threefold over $k$ 
such that $|-K_X|$ is very ample. 
Then the following hold: 
\begin{enumerate}
\item 
$(S, H)$ is Brill-Noether general for a smooth member $S$ of $|-K_X|$ and $H := -K_X|_S$. 
\item If $S_1$ and $S_2$ are smooth members of $|-K_X|$ 
such that $C := S_1 \cap S_2$ is a smooth curve, then $C$ is Brill-Noether general. 
\end{enumerate}
\end{thm}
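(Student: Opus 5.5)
The plan is to use Lazarsfeld's approach, via vector bundles on the K3 surface, and to follow the surface argument in characteristic zero for (1). The key input that survives in positive characteristic is that $\Pic X = \Z(-K_X)$. Recall that $(S,H)$ is Brill--Noether general when $h^0(S,L)\,h^0(S,M) < h^0(S,H) = g+1$ for every decomposition $H = L + M$ with $L, M$ nonzero effective (or nontrivial) divisors.

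For (1), let $S \in |-K_X|$ be smooth. Then $S$ is a K3 surface with $H^2 = 2g-2$. Suppose $H = L+M$ with $h^0(L),h^0(M)\geq 2$. First I show that $\Pic S$ contains no class violating the inequality, by using the geometry of $X$ rather than a Noether--Lefschetz statement. Lefschetz-type arguments such as $\Pic X \to \Pic S$ being injective with $\Z H$ primitive do not suffice, since $\Pic S$ can be bigger.

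The concrete route is the following. Take such a decomposition with $L$ chosen minimally, so that $L$ and $M$ are base point free and nef and one can assume $h^1(L)=h^1(M)=0$. Suppose $h^0(L)h^0(M)\geq g+1$. By Riemann--Roch, $h^0(L)=2+L^2/2$ and $h^0(M)=2+M^2/2$. Combined with $L^2+2LM+M^2=2g-2$ and the Hodge index inequality $(L\cdot M)^2\geq L^2M^2$, this forces small degrees: typically $L$ is an elliptic pencil with $L\cdot H$ small, or $L^2=2$ and so on. For instance, with $L^2=0$ one gets $2h^0(M)\geq g+1$, which forces $L\cdot H\leq$ roughly $\sqrt{2g}$. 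So $S\subset \P^{g+1}$ contains an elliptic curve, conic or line of low degree, spanning a small linear space, and there is a rational normal scroll containing $S$ swept out by the spans of the members of $|L|$.

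The key step is to lift this structure to $X$. The scroll $T$ swept by the spans $\langle D\rangle$, $D\in|L|$, has dimension $\dim\langle D\rangle+1$. Since $X$ is cut out by quadrics (for $g\geq 5$ this is known in positive characteristic from the authors' earlier papers, which I will cite), the spans $\langle D\rangle\cap X$ and a Castelnuovo-type count show that $X\cap T$ is a threefold-level object, or that $X$ contains a divisor of degree less than $2g-2$. Either way this contradicts $\Pic X=\Z H$. In the end, $X$ must contain a surface whose class is not a multiple of $-K_X$, or $X$ is a divisor in a scroll, which contradicts primality.

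Alternatively, and this is what I would try first, I would argue via the Lazarsfeld--Mukai bundle. If $C\in|H|$ carries a $g^1_d$ violating Brill--Noether, then the associated Lazarsfeld--Mukai bundle $E$ on $S$ is non-simple and yields the decomposition above. The main obstacle is precisely the extension from $S$ to $X$, because Lefschetz-type extension of line bundles fails for $\Pic S$. I expect the scroll/quadric argument to be where care is needed, including in low characteristic where generic smoothness fails. I will therefore avoid general-member arguments and work with the fixed $S$.

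For (2), let $C=S_1\cap S_2$, which is a smooth curve in $|H|$ on the K3 surface $S_1$ of genus $g$. By (1), $(S_1,H)$ is Brill--Noether general. The result on K3 surfaces that Brill--Noether generality of $(S,H)$ implies that every smooth curve in $|H|$ is Brill--Noether general was proved by Lazarsfeld (and Mukai) using Lazarsfeld--Mukai bundles. I will check that its proof is characteristic-free. It uses only Riemann--Roch, Serre duality, and the Hodge index theorem on $S$, and so it applies in positive characteristic. Applying it to $C\subset S_1$ gives (2).
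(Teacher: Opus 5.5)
There is a genuine gap, and it sits exactly at what you call the key step of (1). Numerics on $S$ (Riemann--Roch, Hodge index) can only list candidate decompositions $H=L+M$. Excluding them needs input from $X$, and the scroll argument cannot supply it. The scroll $\Sigma$ (your $T$) swept out by the spans $\langle D\rangle$, $D\in|L|$, lies in the hyperplane $\langle S\rangle\cong\P^g$. Since $X\cap\langle S\rangle=S$, you get $X\cap \Sigma\subseteq S$. So no threefold, no new divisor of $X$, and no scroll containing $X$ ever appears. Getting anything on $X$ would require extending $|L|$ (or $\Sigma$) from $S$ to $X$, which is precisely the Noether--Lefschetz obstruction you point out yourself. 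Neither the unspecified ``Castelnuovo-type count'' nor quadric generation of $X$ addresses it, and your Lazarsfeld--Mukai alternative stops at the same obstruction.

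There is also a separate issue with deducing (2) from (1). It needs the nontrivial fact that BN-generality of $(S,H)$ in Mukai's sense forces every smooth $C\in|H|$ to be BN-general. Lazarsfeld's argument gives this only under much stronger hypotheses such as $\Pic S=\Z H$, so ``it only uses Riemann--Roch, Serre duality and Hodge index'' is not a verification.

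The paper supplies two ideas you are missing.

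\emph{Transport by a vector bundle.} Information on $S$ is carried to $X$ by a vector bundle rather than a line bundle. For base point free $|M|$, the elementary transform $F=F_{X,S,M}=\ker\bigl(H^0(M)\otimes\cO_X\to M\bigr)$ is $\mu$-stable because $\Pic X=\Z K_X$, hence simple. An $\Ext$ computation gives $\dim\Hom(F,F)\ge 1+h^0(M)h^0(H-M)-(g+1)$. This yields, for \emph{every} smooth $S$, only the weak bound $h^0(M)h^0(H-M)\le g+1$ (Proposition~\ref{p weak BN generality}).

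\emph{Reversing the order and passing to the generic member.} To sharpen $g+1$ to $g$, the paper proves (2) first. The fixed curve $C=S_1\cap S_2$ lies on the generic member $T$ of the pencil, base changed to $\kappa=k(t^{1/p^\infty})$. A generic Lefschetz theorem together with primitivity of $-K_X|_S$ (itself a consequence of the weak bound) gives $\Pic T=\Z H$. The Lazarsfeld--Mukai bundle $F_{T,C,A}$ is then $\mu$-stable, and its endomorphism ring is a field because $\kappa$ is a perfect $C_1$-field. Over $\ol{\kappa}$ it splits into Galois-conjugate simple summands with equal invariants. The weak bound on $T_{\ol{\kappa}}$, plus a case analysis on $H_i^2$ using $g\le 12$, rules this out. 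Statement (1) then follows from (2) by the easy restriction argument.

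In particular, your decision to ``avoid general-member arguments and work with the fixed $S$'' discards exactly the device that makes the proof work. The generic member of the pencil has Picard rank one, yet it still contains the fixed curve $C$.
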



\subsection{Proof of Theorem \ref{intro Mukai bdl}}

Let $X$ be a prime Fano threefold over $k$ 
of genus $g  =rs \geq 8$ satisfying $r \geq 2$ and $s \geq 2$. 
The existence problem of a vector bundle $\cU_X$ as in Theorem \ref{intro Mukai bdl} is reduced to the case when $s \geq r$ (Lemma \ref{l perp is Mukai}). 
Then 
it is enough to find a pair $(S, M)$ such that 
\begin{itemize}
\item $S$ is a member of $|-K_X|$ which is a  canonical (i.e., RDP) K3 surface. 
\item $M$ is a special Mukai divisor of type $(r, s)$ 
on $(S, H)$ for $H:=-K_X|_S$, i.e., 
$M$ is a Cartier divisor on $S$ such that 
\begin{enumerate}
\renewcommand{\labelenumi}{(\roman{enumi})}
\item $M \cdot H = (r-1)(s+1)$, 
\item $h^0(\MO_S(M))=r$, 
\item $H^1(\MO_S(M))= H^2(\MO_S(M))=0$, and 
\item $|M|$ is base point free. 
\end{enumerate}
\end{itemize}
Indeed, we can check that the vector bundle $\cU_X$  defined by the exact sequence 
\[
0 \to \cU_X \to H^0(M) \otimes \MO_X \to \MO_S(M) \to 0 
\]
satisfies the required properties. 
In characteristic zero, Bayer--Kuznetsov--Macri proved the existence of such a pair up to taking the minimal resolution of $S$ \cite[Proposition 5.5]{BKM24}. 
However, 
it seems to be hard to apply the same argument to the case of positive characteristic. 

Our strategy is to utilise a $W(k)$-lift $\cX$ of $X$, 
whose existence is guaranteed by \cite[Theorem A]{KTLift1}. 
By taking a $W(k)$-lift of a very general Lefshetz pencil of $X \subset \P^{g+1}_k$, 
we can find a finite extension $R$ of $W(k)$ and a mixed-characteristic family 
$\pi : \cS \to \Spec R$ of canonical K3 surfaces 
such that the generic fibre $S_Q$ with $Q := \Frac R$ admits a 
Cartier divisor $M_Q$ whose base change $M_{\ol{Q}}$ to the algebraic closure $\ol{Q}$ is a special Mukai divisor $M_Q$  of type $(r, s)$.  

We now treat the case when 
\begin{enumerate}
\item[($\star$)] $\cS$ is factorial (e.g., $\cS$ is regular). 
\end{enumerate}
In this case, we can find the specialisation $M$ of $M_Q$,  
because the Weil divisor $\cM$ on $\cS$, obtained as the Zariski closure of $M_Q$, 
is automatically Cartier by $(\star)$. 
Then it suffices to check that 
the Cartier divisor $M$ on the positive-characteristic fibre $S$ of $\pi$ satisfies (i)-(iv). 
The condition (i) follows from the invariance of intersection numbers $M \cdot H = M_Q \cdot H_Q = (r-1)(s+1)$, 
where $H := -K_{\cX}|_S$ and $H_Q := -K_{\cX}|_{S_Q}$. 
The condition (ii) holds by 
\[
g =rs = 
h^0(\MO_{S_Q}(M_Q))h^0(\MO_{S_Q}(H_Q-M_Q)) \overset{{\rm (a)}}{\leq} 
h^0(\MO_S(M))h^0(\MO_S(H-M)) 
\overset{{\rm (b)}}{\leq} 
g, 
\]
where (a) follows from the upper semi-continuity and (b) is a conclusion of the Brill-Noether generality (Theorem \ref{intro BN}(1)). 
By standard argument, (ii) implies (iii). 
The condition (iv) is automatic by Remark \ref{intro rem min resol}. 
This completes the case when $(\star)$ holds.

\begin{rem}\label{intro rem min resol}
Let $\mu : \wt{S} \to S$ be the minimal resolution of $S$ and set $\wt{H} := \mu^*H$. 
In \cite{BKM24}, 
they prove that 
the minimal resolution 
$(\wt{S}, \wt{H})$ admits a special Mukai divisor of type $(r, s)$. 
In our present situation, we establish the following results which enable us to ignore the condition (iv) 
and the difference between $S$ and $\wt{S}$:  
\begin{enumerate}
\item If $M$ satisfies (i)-(iii), then the condition (iv) holds automatically (Proposition \ref{p weakly is not weakly}). 
\item $(S, H)$ has a special Mukai divisor of type $(r, s)$ if and only if $(\wt{S}, \wt{H})$ has a special Mukai divisor of type $(r, s)$ (Corollary \ref{c exist Mukai div up to resol}).  
\end{enumerate}
\end{rem}


Although 
it is too much to hope $(\star)$ in the general case, 
we can apply the above argument after replacing $\cS$ by a suitable birational model $\cS'$. 
Unfortunately, Artin's simultaneous resolution is not 
enough for our purpose. 
Instead, we will apply minimal model program. 
More specifically, 
by using a desingularisation and minimal model program, 
we can find a projective birational morphism $\cS' \to \cS$ such that 
$\cS'$ is a $\Q$-factorial terminal threefold and 
the induced morphism $\cS'_t \to \cS_t$ for each geometric point $t$ of $\Spec R$ is  either an isomorphism or the minimal resolution of $\cS_t$. 
Note that we may ignore the difference between the fibres 
$\cS_t$ and $\cS'_t$ by Remark \ref{intro rem min resol}. 
Since $\cS'$ has only isolated hypersurface singularities, 
every $\Q$-Cartier Weil divisor is Cartier 
\cite[Theorem 1.1.3]{CS24}. 
This, together with $\Q$-factoriality, implies that $\cS'$ is factorial. 


\begin{rem}
In contrast to the situation in characteristic zero, 
normality of the plt centre $\cS'_p$ of the resulting plt pair $(\cS', \cS'_p)$ is not automatic.  
In the present situation, 
normality of $\cS'_p$ follows from a kind of inversion of adjunction via BCM regularity established by 
\cite{MS21}, \cite{MSTWW}. 
\end{rem}



\subsection{Proof of Theorem \ref{intro BN}}

Let $X$ be a prime Fano threefold 
such that $|-K_X|$ is very ample. 
Take two smooth members $S_1$ and $S_2$ of $|-K_X|$ 
such that $C:=S_1 \cap S_2$ is a smooth curve. 
It is enough to show that $C$ is Brill-Noether general. 
Pick a Cartier divisor $A$ on $C$ such that $|A|$ and $|K_C -A|$ are base point free. 
It suffices  to prove $h^0(A) h^1(A) \leq g$. 

The main idea is to use the generic member $T_0$ of the pencil generated by $S_1$ and $S_2$. 
Note that $T_0$ is a smooth prime divisor on 
the base change $X \times_k \kappa_0$ for $\kappa_0 := k(t)$. 
In order to avoid inseparable phenomena, we further take 
the base changes $T := T_0 \times_{\kappa_0} \kappa$ and  $X_{\kappa} := X \times_k \kappa$ to the purely inseparable closure 
$\kappa :=\kappa_0^{1/p^{\infty}} = k(t^{1/p^{\infty}})$. 
Since the Lefshcetz hyperplane section theorem holds for the generic member $T_0$ \cite{Tan24}, 
we have $\Pic T_0 = \Z (K_X|_{T_0})$. 
This, together with primitivity of $-K_{X_{\ol{\kappa}}}|_{T_{\ol{\kappa}}}$ (Theorem \ref{t primitive -K_X|_S}), 
implies that $\Pic T = \Z H$ for $H := -K_X|_T$. 
For the elementary transform $F_{T, C, A}$  defined by 
\[
0 \to F_{T, C, A} \to H^0(A) \otimes_k \MO_T \to \MO_C(A) \to 0, 
\]
$F := F_{T, C, A}$ is a $\mu$-stable vector bundle.

If the base change $F_{\ol{\kappa}}$ to the algebraic closure $\ol{\kappa}$ is still $\mu$-stable, then it is easy to show 
the required inequality $h^0(A) h^1(A) \leq g$ 
by using $\dim \Hom(F_{\ol{\kappa}}, F_{\ol{\kappa}}) =1$. 
Since this is too much to expect, we instead consider the 
endomorphism ring $\Hom(F, F)$ before taking the base change. 
As $F$ is $\mu$-stable, 
every nonzero endomorphism is an automorphism. 
In other words, $\Hom(F, F)$ is a division ring. 
As the current base field $\kappa = k(t^{1/p^{\infty}})$ is a perfect $C_1$-field, $\Hom(F, F)$ is a commutative field which is a finite separable extension of $\kappa$. 
Taking the base change to the algebraic closure $\ol{\kappa}$, 
we obtain ring isomorphisms 
\[
\Hom(F_{\ol{\kappa}}, F_{\ol{\kappa}}) 
\simeq 
\Hom(F, F) \otimes_{\kappa} \ol{\kappa} \simeq \ol{\kappa}_1 \times \cdots\times \ol{\kappa}_n 
\]
for $F_{\ol{\kappa}} := F \otimes_\kappa \ol{\kappa}$ and $\ol{\kappa}_i := \ol{\kappa}$. 
Corresponding to these isomorphisms, 
we get a direct sum decomposition into vector bundles
\[
F_{\ol{\kappa}} = F_1 \oplus \cdots \oplus F_n. 
\]
As the direct product factors $\ol{\kappa}_1, ..., \ol{\kappa}_n$ are $\Gal(\ol{\kappa}/\kappa)$-conjugate, 
so are $F_1, ..., F_n$. 
Although $F_1, ..., F_n$ are not necessarily $\ol{\kappa}$-isomorphic, they are $\kappa$-isomorphic. 
Hence they share certain invariants, e.g., 
$\rank F_1= \cdots = \rank F_n$. 
Moreover, we get 
\[
H = -c_1(F) = -c_1(F_1)- \cdots -c_1(F_n) = H_1 + \cdots +H_n
\]
for $H_i := -c_1(F_i)$. 
As $F^\vee$ is globally generated, so are $F_i^\vee$ and 
its determinant bundle $\det F_i^\vee = \MO_T(H_i)$. 
Since $H_1^2 = \cdots = H_n^2$, we get 
\begin{equation}\label{e intro BN}
    13 \geq g+1 \overset{(\star)}{\geq} h^0(H_1) h^0(H_2 + \cdots +H_n) 
\geq h^0(H_1) h^0(H_2) \geq 
\left( \frac{1}{2}H_1^2 + 2\right)^2, 
\end{equation}
where $(\star)$ is a weakening of the Brill-Noether inequality (Proposition \ref{p weak BN generality}), which is established again by using the elementary transform: 
\[
0 \to F_{X_{\kappa}, T, H_1} \to H^0(H_1) \otimes \MO_{X_\kappa} \to \MO_T(H_1) \to 0. 
\]
As a conclusion of (\ref{e intro BN}), 
we get  $H_1^2 <4$. 
The remaining cases $H^2_1 =0$ and $H_1^2 =2$ are handled by a similar but more careful analysis. 
For further details on the overall argument, see Section \ref{s BN general}.






\medskip
\noindent {\bf Acknowledgements.}
The author was supported by JSPS KAKENHI Grant number JP22H01112 and JP23K03028. 
The author thanks Akihiro Kanemitsu for fruitful discussions and helpful suggestions that led to more general statements and simpler proofs of Lemma \ref{l stablity criterion} and Theorem \ref{t primitive -K_X|_S}. 
ChatGPT (OpenAI) was used during the preparation of this article for assistance with mathematical exploration, computations, and language editing.

\section{Preliminaries}

\subsection{Notation}\label{ss notation}

In this subsection, we summarise notation used in this paper. 

\begin{enumerate}
\item We will freely use the notation and terminology in \cite{Har77}. 
In particular, $D_1 \sim D_2$ means linear equivalence of Weil divisors. 
\item 
Throughout this paper, 
we work over an algebraically closed field $k$ 
of characteristic $p>0$ unless otherwise specified. 
\item For an integral scheme $X$, 
we define the {\em function field} $K(X)$ of $X$ 
as the local ring $\MO_{X, \xi}$ at the generic point $\xi$ of $X$. 
For an integral domain $A$, $K(A)$ denotes the function field of $\Spec A$. 
\item 
Our notation will not distinguish between invertible sheaves and 
Cartier divisors. For example, $\Pic X= \Z K_X$ 
means that $\Pic X$ is generated by $\omega_X$ as an abelian group. 
\item We say that $X$ is a {\em variety} over a field $\kappa$ if 
$X$ is a separated integral scheme which is of finite type over $\kappa$ such that $\kappa$ is algebraically closed in $K(X)$. 
When $X$ is a projective integral normal scheme over $\kappa$, 
$\kappa$ is algebraically closed in $K(X)$ if and only if $H^0(X, \MO_X)=\kappa$. 
We say that $X$ is a {\em curve} (resp. a {\em surface}, resp. a {\em threefold}) over $\kappa$  
if $X$ is a variety over $\kappa$ of dimension one (resp. {\em two}, resp. {\em three}). 
\item 
For a scheme $X$ over a field $\kappa$ and a field extension $\kappa'/\kappa$, 
we set $X \times_\kappa \kappa' := X \times_{\Spec \kappa} \Spec \kappa'$. 
\item\label{def exc locus mathbbE} 
Given  a projective normal surface $S$ and 
a nef and big Cartier divisor $H$ on $S$, 
the {\em exceptional locus} 
$\bE(H)$ of $H$ is defined by $\bE(H) := \bigcup_{C, H \cdot C=0} C$, where $C$ runs over all the curves on $S$. 
It is well known that $\bE(H)$ is a proper closed subset of $S$.
\item 
Given a projective scheme $X$ over a field with $\rho(X) =1$ and a vector bundle $E$, 
we say that $E$ is {\em $\mu$-stable} 
if $E$ is $\mu_H$-stable for some (every) ample Cartier divisor $H$. 
\end{enumerate}

\subsubsection{Fano threefolds}

We say that $X$ is a {\em Fano threefold} (over $k$) 
if $X$ is a smooth projective threefold over $k$ such that $-K_X$ is ample. 
We say that $X$ is a {\em prime Fano threefold} 
if $X$ is a Fano threefold such that $\Pic X  = \Z K_X$. 
For a prime Fano threefold $X$, 
the integer $g$ defined by $(-K_X)^3 = 2g -2$ is called the {\em genus} of $X$. 
It is well known that the following hold  \cite[Theorem 1.1]{FanoI}, \cite[Theorem 1.2]{FanoII}: 
\begin{enumerate}
\item[(i)] $2 \leq g \leq 12$ and $g \neq 11$. 
\item[(ii)] 
If $|-K_X|$ is very ample, then $g \geq 3$. 
If $g \geq 4$, then $|-K_X|$ is very ample. 
\end{enumerate}
If $X$ is a prime Fano threefold and $|-K_X|$ is very ample, 
then we have the closed embedding $X \subset \P^{g+1}$ induced by $|-K_X|$. 
In this case, $X \subset \P^{g+1}$ is also called a prime Fano threefold of genus $g$.

\subsubsection{Mukai vectors of vector bundles on K3 surfaces}

Let $S$ be a smooth K3 surface and let $E$ be a vector bundle $E$ on $S$. 
Set 
\[
s(E) := \rank E + {\rm ch}_2(E) = \rank E + \frac{1}{2} c_1(E)^2-c_2(E)
\]
and the {\em Mukai vector} $v(E)$ of $E$ is defined by 
\[
v(E) := (\rank E, c_1(E), s(E)) \in \Z \oplus \Pic(S) \oplus \Z. 
\]
The Riemann-Roch theorem implies 
\[
\chi(S, E) = (\rank E) \cdot \chi(S, \MO_S) + 
\frac{1}{2} c_1(E) \cdot (c_1(E) -K_S) -c_2(E) = 
\rank E + s(E). 
\]
Moreover, 
we have $v(E) = v(E') + v(E'')$ for an exact sequence $0 \to E' \to E \to E'' \to 0$ of vector bundles.

\subsection{Elementary transform}

\begin{dfn}\label{d ele tf}
Let $X$ be a projective variety over a field $\kappa$. 
For a nonzero effective Cartier divisor $T$ and a globally generated 
line bundle  $L$ on $T$, 
we define the coherent sheaf $F_{X, T, L}$ on $X$ by the following exact sequence: 
\begin{equation}\label{e1 ele tf}
0 \to F_{X, T, L} \to H^0(T, L) \otimes_\kappa \MO_X \xrightarrow{r} i_*L \to 0,
\end{equation}
where $i: T \hookrightarrow X$ denotes the induced closed immersion. 
Here  $r$ denotes the restriction homomorphism, 
which is surjective as $L$ is globally generated. 
\end{dfn}

\begin{prop}\label{p ele tf}
We use the same notation as in Definition \ref{d ele tf}. 
Then the following hold. 
\begin{enumerate}
\item $F_{X, T, L}$ is a vector bundle on $X$ of rank 
$h^0(T, L)$. 
\item $c_1(F_{X, T, L}) = -T$. 
\item $c_2(F_{X, T, L}) = i_*c_1(L)$. 
\item $H^0(X, F_{X, T, L})=0$ and 
$H^1(X, F_{X, T, L}) \hookrightarrow H^0(T, L) \otimes_\kappa H^1(X, \MO_X)$. 
\item $F^\vee_{X, T, L}$ is globally generated at the generic point if $X$ is regular. 
\item 
$F_{X, T, L}^\vee$ is globally generated 
if $X$ is regular, $H^1(X, \MO_X)=0$, and 
$\MO_X(T)|_T \otimes L^{-1}$ is globally generated. 
\end{enumerate}
\end{prop}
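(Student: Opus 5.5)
Each item will be checked locally. Set $F := F_{X,T,L}$ and write $i_*L$ simply as $L$ when it is regarded as a sheaf on $X$.

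\emph{(1), (2), (3).} Since $T \subset X$ is an effective Cartier divisor and $L$ is a line bundle on $T$, the sheaf $L$ has a locally free resolution of length one on $X$. Locally $L \simeq \MO_T = \MO_X/(f)$, so $0 \to \MO_X \xrightarrow{f} \MO_X \to L \to 0$ locally. Hence $L$ has projective dimension one at every point. From (\ref{e1 ele tf}) and the Auslander--Buchsbaum or depth argument (or simply the $\cExt$ computation $\cExt^j(F,-)=\cExt^{j+1}(L,-)=0$ for $j \geq 1$), the kernel $F$ is locally free. Its rank is $h^0(T,L)$, since $L$ is torsion. Chern classes are additive, so $c(F) = c(i_*L)^{-1}$. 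The Chern classes of $i_*L$ come from Grothendieck--Riemann--Roch for the divisor $T$, or directly from the resolution $0 \to \MO_X(-T)\otimes \widetilde L \to \widetilde L \to i_*L \to 0$ when $L$ extends locally. This gives $c_1(i_*L)=T$ and $c_2(i_*L) = T^2 - i_*c_1(L)$. Inverting the total Chern class yields $c_1(F)=-T$ and $c_2(F) = c_1(i_*L)^2 - c_2(i_*L) = i_*c_1(L)$. To make this rigorous in full generality, one can use the standard formula $\mathrm{ch}(i_*L) = i_*(\mathrm{ch}(L)\,\mathrm{td}(N_T)^{-1})$, which is valid for a Cartier divisor. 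This is a routine but notationally heavy step.

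\emph{(4).} Take the long exact sequence of (\ref{e1 ele tf}). The map $H^0(T,L)\otimes H^0(X,\MO_X) \to H^0(T,L)$ is the identity, since $H^0(X,\MO_X)=\kappa$ because $X$ is a projective variety. Therefore $H^0(F)=0$, and $H^1(F)$ injects into $H^0(T,L)\otimes H^1(X,\MO_X)$.

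\emph{(5), (6).} Dualise (\ref{e1 ele tf}). Since $\cHom(i_*L,\MO_X)=0$ and $\cExt^1(i_*L,\MO_X) \simeq i_*(L^{-1}\otimes \MO_X(T)|_T)$ (using regularity, or just the Cartier-ness of $T$), we get
\[
0 \to H^0(T,L)^\vee\otimes \MO_X \to F^\vee \to i_*(L^{-1}\otimes\MO_X(T)|_T) \to 0.
\]
Away from $T$ the first map is an isomorphism, which gives (5). For (6), $H^1(X,\MO_X)=0$ makes $H^0(F^\vee) \to H^0(T, L^{-1}(T))$ surjective. The quotient is globally generated by hypothesis, and the subsheaf is trivial. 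A standard diagram chase then shows that the middle term is globally generated. The main obstacle is only the Chern class bookkeeping in (3); I would handle it via GRR, or via the local resolution argument above.
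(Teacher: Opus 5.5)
Your proposal is correct and follows the same route as the paper: (1)--(3) rest on the projective-dimension-one argument and the Chern class computation for $i_*L$, which the paper delegates to Friedman's Lemma 16; (4) is the long exact cohomology sequence together with $H^0(X,\MO_X)=\kappa$; and (5)--(6) come from dualising to $0\to H^0(T,L)^\vee\otimes\MO_X\to F^\vee\to i_*(\MO_X(T)|_T\otimes L^{-1})\to 0$. One wording fix: the resolution via $\widetilde L$ computes $c(i_*L)$ only when $L$ extends \emph{globally} to $X$, so in general the Riemann--Roch formula for the regular embedding $T\subset X$ does the work, as you already indicate.
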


\begin{proof}
The assertions (1)--(3) follow from the same argument as in \cite[Lemma 16 in Section 2]{Fri98}. 
The assertion (4) holds by taking  the cohomologies $H^i(X, -)$ of the exact sequence (\ref{e1 ele tf}). 

Let us show (5) and (6). 
By applying $\cHom(-, \MO_X)$ to  (\ref{e1 ele tf}), 
we get 
\[
0 \to H^0(T, L)^\vee \otimes \MO_X \to F^\vee_{X, T, L} 
\to \MO_X(T) \otimes i_*L^{-1} \to 0. 
\]
Hence (6) holds. 
Moreover, (5) holds because  
$H^0(T, L)^\vee \otimes \MO_X$ is globally generated and 
$(H^0(T, L)^\vee \otimes \MO_X)_{\xi} \xrightarrow{\simeq} 
(F^\vee_{X, T, L})_{\xi}$ for the generic point $\xi$. 
\qedhere



\end{proof}

\begin{lem}\label{l DPS bundle}
Let $X$ be a regular variety over a field $\kappa$. 
Let $E$ be a vector bundle and let $F$ be a reflexive subsheaf of $E$. 
Assume that the cokernel of $(\wedge^m F)^{**} \to \wedge^m E$ is locally free 
for $m:= \rank F$, where $(\wedge^m F)^{**}$ denotes the double dual 
of $\wedge^m F$. 
Then $F$ and $E/F$ are locally free. 
\end{lem}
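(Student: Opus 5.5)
The plan is to reduce everything to a local statement and then use the standard criterion that a torsion-free module over a regular local ring is free as soon as its top exterior power behaves well. Work at a point $x \in X$ and set $A := \MO_{X,x}$, which is a regular local ring. Put $Q := E/F$, a coherent sheaf of rank $n-m$, where $n := \rank E$. We must show that $F_x$ and $Q_x$ are free $A$-modules.

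\textbf{Local structure.} Since $(\wedge^m F)^{**}$ is a reflexive sheaf of rank one on a regular (hence locally factorial) scheme, it is an invertible sheaf $L$. The hypothesis gives a short exact sequence
\[
0 \to L \to \wedge^m E \to G \to 0
\]
with $G$ locally free. Injectivity holds because the map is generically injective and $L$ is torsion-free.

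\textbf{Local splitting and decomposability.} Since $G$ is locally free, the sequence splits locally. So the generator $\omega$ of $L_x$ is a unimodular element of $\wedge^m E_x$: its image in $\wedge^m E \otimes k(x)$ is nonzero. Over the fraction field $K$ of $A$, $\omega$ spans $\wedge^m F_K$, so it is a decomposable $m$-vector. Decomposability is cut out by the Plücker relations, which are closed conditions. Hence the image $\bar\omega$ of $\omega$ in $\wedge^m (E\otimes k(x))$ is a nonzero decomposable vector. It therefore defines an $m$-dimensional subspace $W \subset E \otimes k(x)$.

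\textbf{Constructing a free direct summand $F'$.} Choose a free direct summand $F' \subset E_x$ of rank $m$ lifting $W$. Its determinant then generates a direct summand of $\wedge^m E_x$ agreeing with $L_x$ modulo $\mathfrak m$. The key claim is that $F_x = F'$. The cleanest route I would try is the following. Define $F'' := \{e \in E_x : e \wedge \omega = 0 \text{ in } \wedge^{m+1}E_x\}$. Since $\omega$ is unimodular and decomposable, $F''$ is exactly the saturation of $F_x$, and $E_x/F''$ is torsion-free. To see this, use that $\omega$ is decomposable over $K$ and then check freeness. Here I would argue that the kernel of the map $E_x \to \wedge^{m+1}E_x$, $e \mapsto e\wedge\omega$, is a free direct summand of rank $m$, by reducing mod $\mathfrak m$: the map $E\otimes k(x) \to \wedge^{m+1}(E\otimes k(x))$, $e\mapsto e\wedge\bar\omega$, has rank exactly $n-m$. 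Since rank is lower semicontinuous and the generic rank is also $n-m$, the rank is constant. Thus the image is locally a direct summand and the kernel is free of rank $m$. Consequently $E_x/F''$ is free.

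\textbf{Main obstacle: showing $F_x = F''$.} Clearly $F_x \subset F''$, with equality generically. To show equality, note that $F''/F_x$ is torsion. Because $F$ is reflexive and $F''$ is free, equality holds in codimension $\le 1$, and then holds everywhere since both are reflexive. The codimension-one step is the real obstacle. Over a DVR, $F_x \subset F''$ are both free of rank $m$, so $\wedge^m F_x \to \wedge^m F''$ is multiplication by $\det$ of the inclusion. Here $\wedge^m F_x = L$ is saturated in $\wedge^m E$ because its cokernel $G$ is torsion-free, so $\det$ is a unit and $F_x = F''$. This gives $F$ locally free and $E/F = E/F''$ locally free.
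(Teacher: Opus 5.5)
Your argument is correct and is essentially the argument of \cite[Lemma 1.20]{DPS94}, which the paper cites as its entire proof. The local generator $\omega$ of $(\wedge^m F)^{**}$ is unimodular and generically decomposable, hence decomposable at $x$. The kernel of $e \mapsto e\wedge\omega$ is then a free direct summand of rank $m$ containing $F_x$, and reflexivity of $F$ together with your determinant check at height-one primes forces $F_x$ to equal it.

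The preliminary paragraph choosing an arbitrary lift $F'$ of $W$ is superfluous, since such a lift need not equal $F_x$; only the canonical $F''$ is needed.
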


\begin{proof}
The same argument as in \cite[Lemma 1.20]{DPS94} works. 
\end{proof}

\begin{lem}\label{l trivial criterion}
Let $X$ be a regular projective variety over a field $\kappa$ and 
let $F$ be a reflexive sheaf on $X$ of rank $m$. 
Assume that $H^0(X, (\wedge^m F)^{**} ) \neq 0$ and 
there exists  an injective $\MO_X$-module homomorphism 
$\iota : F \hookrightarrow \MO_X^{\oplus n}$ for some $n>0$. 
Then $F \simeq \MO_X^{\oplus m}$. 
\end{lem}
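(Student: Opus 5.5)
We prove Lemma \ref{l trivial criterion} in two steps. First we show $(\wedge^m F)^{**} \simeq \MO_X$ and that $\wedge^m \iota$ makes it a subbundle, so that Lemma \ref{l DPS bundle} applies. Then we split off a trivial summand spanned by coordinate vectors.

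Set $D := (\wedge^m F)^{**}$. Since $X$ is regular and $F$ is reflexive of rank $m$, $D$ is a reflexive rank one sheaf on a regular scheme, hence an invertible sheaf $\MO_X(\Delta)$. Applying $\wedge^m$ to $\iota$ and taking double duals gives a morphism
\[
\wedge^m\iota : D \to \wedge^m \MO_X^{\oplus n} = \MO_X^{\oplus \binom{n}{m}}.
\]
It is nonzero, indeed injective, because $\iota$ is injective, hence generically an injection of a rank $m$ space into a vector space, and a nonzero map from a line bundle on an integral scheme is injective.

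Thus $D^{-1} \simeq \MO_X(-\Delta)$ has a nonzero section coming from one of the coordinate projections, so $-\Delta$ is effective. By hypothesis $\Delta$ is also effective. Since $X$ is a projective variety, $\Delta$ and $-\Delta$ both effective forces $\Delta \sim 0$, so $D \simeq \MO_X$. Hence $\wedge^m\iota$ is given by $\binom{n}{m}$ global functions, and these are constants because $H^0(X,\MO_X)=\kappa$ for a projective variety in the paper's convention. So $\wedge^m\iota$ is a nonzero constant vector $v \in \wedge^m \kappa^n$. Consequently $D \to \wedge^m \MO_X^{\oplus n}$ is the inclusion of the trivial line subbundle $\MO_X\cdot v$, whose cokernel is locally free (it is $\MO_X \otimes (\wedge^m\kappa^n / \kappa v)$). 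By Lemma \ref{l DPS bundle}, $F$ and $Q:=\MO_X^{\oplus n}/F$ are locally free, and $F\subset \MO_X^{\oplus n}$ is a subbundle.

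For the second step we use that $v$ is constant. At each point $x$, the fibre $F(x)\subset \kappa(x)^n$ is an $m$-dimensional subspace whose Plücker coordinate is $v$, up to the scalar given by the trivialisation of $D$, which is constant up to a unit. The subspace is determined by its Plücker point, so the fibres $F(x)$ are all equal to a fixed $m$-dimensional subspace $V\subset\kappa^n$ tensored with $\kappa(x)$. Equivalently, the morphism $X\to \Gr(m,n)$ classifying the quotient $Q$ composed with the Plücker embedding is constant.

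To make this sheaf-theoretic, choose a projection $p:\MO_X^{\oplus n}\to \MO_X^{\oplus m}$ onto $m$ coordinates such that the corresponding Plücker coordinate of $v$ is nonzero. Then $\det(p\circ\iota)$ is a nonzero constant, so $p\circ\iota : F \to \MO_X^{\oplus m}$ is a morphism of rank $m$ bundles with invertible determinant, hence an isomorphism.

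The only delicate point is to ensure that the compatibility of $\wedge^m\iota$ with $\det(p\circ\iota)$ holds on reflexive hulls. This is fine on the open set where $F$ is locally free, whose complement has codimension at least $2$, and it extends since all sheaves involved are reflexive.
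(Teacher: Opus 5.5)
Your proof is correct and follows the paper's own argument. You use the determinant map $\wedge^m\iota$ composed with a coordinate projection, together with $H^0(X,\det F)\neq 0$, to get $\det F\simeq\MO_X$; this makes $\wedge^m\iota$ a split (constant) injection, so Lemma \ref{l DPS bundle} gives local freeness, and then a projection onto $m$ coordinates turns $\iota$ into an isomorphism. The only cosmetic difference is that you pick those $m$ coordinates in one step via a nonzero Plücker coordinate of $v$ and check directly that $\det(p\circ\iota)$ is a unit, whereas the paper discards coordinates one at a time at the generic point and then invokes the argument of \cite[Lemma 3.7(a)]{BKM24}.
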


\begin{proof}
We now show that $F$ is locally free. 
Since $X$ is regular, $\det F:=(\wedge^m F)^{**}$ is an invertible sheaf. 
The injection 
$\iota:F\hookrightarrow \MO_X^{\oplus n}$ 
induces an injective $\MO_X$-module homomorphism
\[
\iota': 
\det F
\hookrightarrow 
\wedge^m \MO_X^{\oplus n}
\simeq
\MO_X^{\oplus N},
\qquad
N:=\binom{n}{m}.
\]
Taking the projection onto a suitable direct summand, 
we can find an injection $\det F \hookrightarrow \MO_X$. 
This, together with $H^0(X, \det F ) =H^0(X,  (\wedge^m F)^{**} ) \neq 0$, implies that $\det F \simeq \MO_X$. 
Then $\iota'$ is a split injection, and hence $F$ is locally free by Lemma \ref{l DPS bundle}.

\medskip 

Let $\xi$ be the generic point of $X$. 
If $m<n$, then the image of $\iota_{\xi} : F_{\xi} \hookrightarrow \MO_{X, \xi}^{\oplus n}$ does not contain one of the following elements: 
\[
(1, 0, ..., 0), (0, 1, 0, ..., 0), ..., (0, ..., 0, 1) \in \MO_{X, \xi}^{\oplus n}. 
\]
Hence we can find a coordinate projection $\pi : \MO^{\oplus n}_X \to \MO_X^{\oplus n-1}$ such that the composition $\pi \circ \iota : F \to \MO_X^{\oplus n-1}$ is still injective. 
Repeating this procedure, the problem is reduced to the case when $n=m$. 
Then the same argument as in \cite[the proof of Lemma 3.7(a)]{BKM24} works. 
\end{proof}

\begin{lem}\label{l stablity criterion}
Let $X$ be a regular projective variety over a field $\kappa$ 
such that $\Pic X = \Z H$ for some ample Cartier divisor $H$. 
Let $E$ be a vector bundle on $X$ such that 
\begin{enumerate}
\item $c_1(E) = -H$, 
\item $E^{\vee}$ is globally generated at the generic point, and 
\item 
$H^0(X, E) = 0$.
\end{enumerate}
Then $E$ is $\mu$-stable. 
\end{lem}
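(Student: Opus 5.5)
Argue by contradiction. Suppose $E$ is not $\mu$-stable. Then there is a saturated subsheaf $0 \neq F \subsetneq E$ with torsion-free quotient $Q := E/F$ and $\mu(F) \geq \mu(E)$, where $\mu(E) = -H^{n-1}\cdot H/\rank E<0$ with $n=\dim X$. Since $X$ is regular, $F$ is reflexive, and $\det F := (\wedge^m F)^{**}$ is an invertible sheaf, where $m=\rank F$. Because $\Pic X = \Z H$, we may write $c_1(F) = aH$ for some $a\in\Z$. Set $q := \rank Q = \rank E - m > 0$ and $c_1(Q) = -(1+a)H$. The destabilising inequality $a/m \geq -1/\rank E$ forces $a \geq 0$, since $a$ is an integer and $m<\rank E$. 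It remains to show $a\geq 0$ is impossible, using (2) and (3).

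First I use (2) to control $Q$. Dualising $0\to F\to E\to Q\to 0$ gives an injection $Q^\vee \hookrightarrow E^\vee$ and a map $E^\vee \to F^\vee$ which is surjective at the generic point $\xi$. Hence $F^\vee$ is globally generated at $\xi$: the images of global sections of $E^\vee$ span $F^\vee_\xi$. Take $m$ sections of $E^\vee$ whose images span $F^\vee_\xi$. They give a map $\MO_X^{\oplus m} \to F^\vee$ that is generically an isomorphism, and taking determinants yields a nonzero section of $\det F^\vee = \MO_X(-aH)$. As $H$ is ample, this forces $-a \geq 0$, i.e.\ $a\leq 0$. Combined with $a\geq 0$, we get $a=0$, so $\det F \simeq \MO_X$ and $\det F^\vee\simeq\MO_X$.

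Now I aim to show $F \simeq \MO_X^{\oplus m}$ via Lemma~\ref{l trivial criterion}, which would contradict (3) since $H^0(X,F)\subset H^0(X,E)=0$. In the previous step, the determinant of $\MO_X^{\oplus m}\to F^\vee$ is a nonzero section of $\det F^\vee\simeq \MO_X$, hence a nowhere vanishing constant. So $\MO_X^{\oplus m}\to F^\vee$ is an isomorphism away from codimension $\geq 2$ (where $F^\vee$ may fail to be locally free), and therefore an isomorphism because both sheaves are reflexive. Thus $F^\vee$ is free, and so $F \simeq F^{\vee\vee}$ is free. If I prefer to invoke Lemma~\ref{l trivial criterion} literally, I apply it to the reflexive sheaf $F^\vee$. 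Here $\det F^\vee\simeq\MO_X$ has a nonzero section, and the injection $F^\vee \hookrightarrow \MO_X^{\oplus n}$ is obtained by dualising the generically surjective $\MO_X^{\oplus n}\to F^\vee$ coming from $H^0(E^\vee)$; dualising a generically surjective map between torsion-free sheaves gives an injection $F^{\vee\vee}\hookrightarrow \MO_X^{\oplus n}$, applied appropriately. Either way $F$ is trivial of rank $m\geq1$, so $H^0(X,E)\supset H^0(X,F)\neq 0$, which contradicts (3).

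The main subtlety is bookkeeping with reflexive versus locally free sheaves in the codimension-two locus. Regularity of $X$ handles this: $\det$ of a reflexive sheaf is invertible, and a map of reflexive sheaves that is an isomorphism in codimension one is an isomorphism. Note also that $\mu$-stability here is with respect to $H$ over the possibly non-closed field $\kappa$; nothing in the argument requires $\kappa$ to be algebraically closed.
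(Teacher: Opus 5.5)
Your argument is correct. Its skeleton matches the paper's: a saturated destabilising $F$ with $c_1(F)=aH$ and $a\geq 0$ is shown to be trivial, which contradicts $H^0(X,E)=0$. Where you differ is in how you prove that $F$ is trivial.

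\textbf{The paper's route.} It dualises the generically surjective map $\MO_X^{\oplus n}\to E^\vee$ to embed $F\hookrightarrow \MO_X^{\oplus n}$. It then gets $H^0(X,\det F)\neq 0$ from $a\geq 0$ together with $H^0(X,\MO_X(H))\neq 0$, which it deduces from (1) and (2). Finally it invokes Lemma \ref{l trivial criterion}. That lemma embeds $\det F$ into $\MO_X$, uses \cite{DPS94} for local freeness, cuts $n$ down to $m$ by coordinate projections, and cites \cite{BKM24} for the square case.

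\textbf{Your route.} You work on the dual side throughout.
\begin{enumerate}
\item Generic global generation passes from $E^\vee$ to $F^\vee$ through the generically surjective $E^\vee\to F^\vee$.
\item You choose $m$ sections, so the map $\MO_X^{\oplus m}\to F^\vee$ is square from the start.
\item Its determinant is a nonzero section of $\MO_X(-aH)$, so ampleness gives $a\leq 0$, hence $a=0$.
\item The determinant is then a unit, so the map is an isomorphism in codimension one, and hence everywhere by reflexivity.
\end{enumerate}

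\textbf{Comparison.} Your version is self-contained: it needs neither \cite{DPS94} nor \cite{BKM24}. It uses ampleness of $H$ in place of effectivity of $H$, and it makes the codimension-two bookkeeping explicit. The paper's version has the advantage of packaging the triviality step as a reusable lemma about reflexive subsheaves of trivial bundles.

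\textbf{A slip in your optional remark.} Dualising $\MO_X^{\oplus n}\to F^\vee$ yields $F=F^{\vee\vee}\hookrightarrow \MO_X^{\oplus n}$, which embeds $F$, not $F^\vee$. So Lemma \ref{l trivial criterion} should be applied to $F$ itself, with $H^0(X,\det F)=H^0(X,\MO_X)\neq 0$. That is exactly the paper's proof. Your main argument does not depend on this remark, so nothing is lost.
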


\begin{proof}
Suppose that $E$ is not $\mu$-stable. 
Then there exists a nonzero proper reflexive subsheaf $F \subset E$ such that $\rank F < \rank E$ and 
\[
\frac{c_1(F) \cdot H^{d-1}}{\rank F}= \mu_H(F) \geq \mu_H(E) = \frac{c_1(E) \cdot H^{d-1}}{\rank E}. 
\]
By $\Pic X = \Z H$, we have $c_1(F) = sH$ for some $s \in \Z$. 
It holds that 
\[
s H^d = c_1(F) \cdot H^{d-1} \geq 
\frac{\rank F}{\rank E} c_1(E) \cdot H^{d-1}
\overset{{\rm (1)}}{=}  \frac{\rank F}{\rank E} (-H^d) > -H^d, 
\]
which implies $s \geq 0$. 
By (1) and (2), $\MO_X(H)$ is globally generated at the generic point, and hence $H^0(X, \MO_X(H)) \neq 0$. 
In particular, $c_1(F) \geq 0$. 
By (2), we get a generically surjective $\MO_X$-module homomorphism 
$\alpha: \MO_X^{\oplus n} \to E^\vee$ for some $n>0$. 
Applying $(-)^\vee$, 
we get the composition 
$\iota: F \hookrightarrow E \xrightarrow{\alpha^\vee} \MO_X^{\oplus n}$, 
which is generically injective. 
As $F$ is torsion free, $\iota$ is injective. 
Then we obtain  $F \simeq \MO_X^{\oplus m}$ for some $m > 0$ 
(Lemma \ref{l trivial criterion}), which would lead to the following contradiction: 
\[
0 \neq H^0(X, F) \hookrightarrow H^0(X, E) \overset{{\rm (3)}}{=}0. 
\]
Therefore, $E$ is $\mu$-stable. 
\qedhere
\end{proof}

\begin{prop}\label{p ele tf stable}
Let $X$ be a regular projective variety over a field $\kappa$ 
such that $\Pic X = \Z T$ for some prime divisor $T$ on $X$ 
whose complete linear system $|T|$ is base point free. 
Let $L$ be a globally generated line bundle on $T$. 
Then 
$F_{X, T, L}$ is $\mu$-stable. 
\end{prop}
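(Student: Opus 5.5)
The plan is to apply Lemma \ref{l stablity criterion} to $E := F_{X, T, L}$ with $H := T$. The lemma's standing hypotheses hold: $X$ is regular and projective, and $\Pic X = \Z T$. Also, $T$ is ample. Indeed, $|T|$ is base point free, so it defines a morphism $\varphi : X \to \P^N$ with $\varphi^*\MO(1) \simeq \MO_X(T)$. Since $T$ is an effective nonzero divisor, this morphism is not constant. Any curve $C$ contracted by $\varphi$ would satisfy $T \cdot C = 0$, hence $D \cdot C = 0$ for every $D \in \Pic X = \Z T$. If $\dim X \geq 2$, this is impossible for a curve on a projective variety, because some ample divisor meets $C$ positively and that ample divisor is a multiple of $T$. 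So $\varphi$ is finite, and $T$ is ample. (In fact, some ample divisor equals $aT$ with $a \neq 0$, and effectivity of $T$ forces $a > 0$, which already gives ampleness directly.)

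We then verify the three conditions of Lemma \ref{l stablity criterion} in turn.

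For condition (1), Proposition \ref{p ele tf}(2) gives $c_1(F_{X,T,L}) = -T = -H$.

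For condition (2), Proposition \ref{p ele tf}(5) shows that $F^\vee_{X,T,L}$ is globally generated at the generic point, since $X$ is regular.

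For condition (3), Proposition \ref{p ele tf}(4) gives $H^0(X, F_{X,T,L}) = 0$. This also follows directly from the defining sequence. The map $H^0(T,L)\otimes H^0(X,\MO_X) \to H^0(T,L)$ is injective because $H^0(X,\MO_X)=\kappa$, since $X$ is a projective variety over $\kappa$.

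With all three conditions in place, Lemma \ref{l stablity criterion} yields that $F_{X,T,L}$ is $\mu$-stable. Note that $F_{X,T,L}$ is a vector bundle by Proposition \ref{p ele tf}(1), as the lemma requires. Since $\rho(X) = 1$, stability with respect to $T$ is the same as $\mu$-stability in the sense of the notation section.

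The only step needing any care is the ampleness of $T$. If one prefers to avoid the argument above, one can simply note that the notion of $\mu$-stability in Lemma \ref{l stablity criterion} only needs $\Pic X = \Z H$ with $H$ ample. Ampleness then holds because $T$ generates $\Pic X$ and is effective, so it is the positive generator and therefore a positive multiple of any ample class up to sign. Beyond this point, the proof is a direct assembly of earlier results.
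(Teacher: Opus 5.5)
Your proof is correct and follows the paper's route: the paper also just checks conditions (1)--(3) of Lemma \ref{l stablity criterion} for $H := T$ using Proposition \ref{p ele tf}(2), (5), and (4). Your added argument that $T$ is ample is a hypothesis of that lemma which the paper leaves implicit; your direct version (an ample class is $aT$ with $a>0$ because $T$ is nonzero and effective) is enough, and it does not need base point freeness of $|T|$.
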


\begin{proof}
It is enough to verify the conditions (1)-(3) of Lemma \ref{l stablity criterion}, which are assured by Proposition \ref{p ele tf}. 
\qedhere

\end{proof}

\subsection{Brill-Noether generality}

\begin{dfn}\label{d BN curve}
Let $C$ be a smooth projective curve of genus $g$. 
We say that $C$ is {\em BN-general} (which stands for Brill-Noether general) if the inequality 
\begin{equation}\label{e1 d BN curve}
h^0(A) h^1(A) \leq g
\end{equation}
holds for every Cartier divisor $A$ on $C$. 
\end{dfn}

\begin{rem}\label{r BN curve deg=1}
Let $C$ be a smooth projective curve of genus $g$. 
For a Cartier divisor $A$ on $C$, the inequality (\ref{e1 d BN curve}) 
is equivalent to each of (1) and (2) below. 
\begin{enumerate}
\item $h^0(A) h^0(K_C -A) \leq g$. 
\item $h^0(A) (h^0(A) +g -1 -\deg A) \leq g$. 
\end{enumerate}
Indeed, (\ref{e1 d BN curve}) is equivalent to (1) (resp.\ (2)) 
by Serre duality (resp.\ the Riemann-Roch theorem). 
In particular,  the inequality (\ref{e1 d BN curve}) automatically holds 
when $h^0(A) \in \{ 0, 1\}$ by (2). 
\end{rem}


\begin{dfn}
\begin{enumerate}
\item 
We say that $S$ is a {\em smooth K3 surface} (resp.\ canonical K3 surface) if 
$S$ is a smooth (resp.\ canonical) projective surface such that $K_S \sim 0$ and $H^1(S, \MO_S)=0$. 
Here a normal surface $S$ is called {\em canonical} if it has at worst canonical singularities. 
\item 
We say that $(S, H)$ is a {\em quasi-polarised smooth K3 surface} 
(resp.\ quasi-polarised canonical K3 surface) if 
$S$ is a smooth (resp.\ canonical) K3 surface and $H$ is a nef and big Cartier divisor on $S$ 
which is linearly equivalent to a smooth prime divisor. 
\end{enumerate}
\end{dfn}

\begin{dfn}
Let $(S, H)$ be a quasi-polarised canonical K3 surface. 
\begin{enumerate}
\item 
The number $g(S, H)$ defined by $H^2 = 2g(S, H) -2$ (i.e., 
$g(S, H) = \frac{1}{2} H^2 +1$) is called the {\em genus} of $(S, H)$. 
\item 
We say that $(S, H)$ is {\em BN-general} if 
the inequality 
\[
h^0(M) h^0(H-M) \leq g(S, H)
\]
holds for every Cartier divisor $M$ on $S$ satisfying $M \not\sim 0$ and $M \not\sim H$. 
\end{enumerate}
\end{dfn}

\begin{rem}
Let $(S, H)$ be a quasi-polarised canonical K3 surface. 
It follows from the Riemann-Roch theorem that $g(S, H) \in \Z$ and $g(S, H) \geq 2$. 
For a smooth member $C \in |H|$, $g(S, H)$ coincides with the genus $g(C)$ of $C$ by the adjunction formula: 
\[
2 g(S, H) -2 = H^2 
=(K_S+H) \cdot H = (K_S+ C) \cdot C = 2g(C) -2. 
\]
\end{rem}

\subsection{Endomorphism rings}

Let $F$ be a coherent sheaf on a projective scheme over a $C_1$-field $\kappa$. 
In this subsection, we summarise some results on the endomorphism rings $\Hom(F, F)$, which are probably well known to experts. 
The base field $\kappa$ is assumed to be a $C_1$-field, 
because our base field $\kappa$ will be, in our application, 
either an algebraically closed field $k$ (the original base field) 
or an algebraic extension of the function field of a curve defined over $k$. 

In what follows, a {\em field} is always assumed to be commutative. 
Although $\Hom(F, F)$ is a (possibly non-commutative) ring, 
its centre $Z(\Hom(F, F))$ is a commutative ring.

\begin{prop}\label{p Artin Wedderburn}
Let $X$ be a projective scheme over a $C_1$-field $\kappa$ and 
let $F$ be a coherent sheaf on $X$ 
such that the endomorphism ring $\Hom(F, F)$ is semisimple. 
Set $n$ to be the number of the maximal ideals of the centre 
$Z(\Hom(F, F))$. 
Then the following hold. 
\begin{enumerate}
\item 
There exist integers $\ell_1, ..., \ell_n >0$ and  
finite field extensions $\kappa_1, ..., \kappa_n$ of $\kappa$
such that the following ring isomorphism holds: 
\begin{equation}\label{e1 Artin Wedderburn}
 \Hom(F, F) \simeq M_{\ell_1}(\kappa_1) \times \cdots \times M_{\ell_n}(\kappa_n).    
\end{equation}
\item 
There exist nonzero coherent subsheaves $F_1, ..., F_n$ of $F$ such that 
\[
F = F_1 \oplus \cdots \oplus F_n, 
\]
$\Hom(F_i, F_j) =0$, and 
the ring isomorphism 
$\Hom(F_i, F_i) \simeq M_{\ell_i}(\kappa_i)$ holds via 
the isomorphism  $(\ref{e1 Artin Wedderburn})$ in $(1)$ for every pair $(i, j)$ satisfying $i \neq j$. 
\end{enumerate}
\end{prop}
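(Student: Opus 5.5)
The plan is to combine the Artin--Wedderburn theorem with the structure of division algebras over $C_1$-fields, and then read off the splitting of $F$ from the central idempotents. Set $A := \Hom(F, F)$. Since $X$ is projective over $\kappa$ and $F$ is coherent, $A$ is a finite-dimensional $\kappa$-algebra, being a subspace of the finite-dimensional space $H^0(X, \cHom(F,F))$. By hypothesis $A$ is semisimple, so Artin--Wedderburn gives a ring isomorphism $A \simeq M_{\ell_1}(D_1) \times \cdots \times M_{\ell_n}(D_n)$ with division rings $D_i$ that are finite-dimensional over $\kappa$. The centre is then $Z(A) \simeq Z(D_1) \times \cdots \times Z(D_n)$, a product of fields. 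Its maximal ideals are exactly the kernels of the projections to the factors, so their number is the integer $n$ of the statement.

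The key step is to show that each $D_i$ is commutative. Put $\kappa_i := Z(D_i)$; this is a finite field extension of $\kappa$, and $D_i$ is a central division algebra over $\kappa_i$. A finite extension of a $C_1$-field is again $C_1$ (Lang), so $\kappa_i$ is $C_1$ and therefore $\Br(\kappa_i) = 0$. The standard argument is that the reduced norm of a central division algebra of degree $d$ is a form of degree $d$ in $d^2$ variables with no nontrivial zero, which forces $d = 1$. Hence $D_i = \kappa_i$, and this proves (1). I expect this to be the only non-formal point. The one thing to check carefully is that Lang's theorem on finite extensions of $C_1$-fields is valid without any separability assumption; it is, since $C_1$ passes to arbitrary finite (indeed algebraic) extensions.

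For (2), let $e_1, \dots, e_n \in A$ be the primitive central idempotents corresponding to the factors in (1). They are orthogonal and satisfy $e_1 + \cdots + e_n = \mathrm{id}_F$. Define $F_i := \Im(e_i) = \Ker(1 - e_i)$, which is a coherent subsheaf of $F$. The usual idempotent decomposition gives $F = F_1 \oplus \cdots \oplus F_n$, and each $F_i$ is nonzero because $e_i \neq 0$. Under this decomposition, $\Hom(F, F) = \bigoplus_{i,j} \Hom(F_j, F_i)$, and the summand $\Hom(F_j, F_i)$ is identified with $e_i A e_j$. Because the $e_i$ are central and orthogonal, $e_i A e_j = e_i e_j A = 0$ for $i \neq j$, so $\Hom(F_j, F_i) = 0$. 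Likewise $\Hom(F_i, F_i) \simeq e_i A e_i = e_i A \simeq M_{\ell_i}(\kappa_i)$, and this identification is compatible with (\ref{e1 Artin Wedderburn}), which completes (2).
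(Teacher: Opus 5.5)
Your proposal is correct and follows essentially the same route as the paper. The paper likewise applies Artin--Wedderburn, notes that the centres $Z(D_i)$ are finite extensions of $\kappa$ and hence $C_1$-fields with trivial Brauer group, so the $D_i$ are commutative, and then splits $F$ using idempotents. The only cosmetic difference is that you use all $n$ central idempotents at once, whereas the paper works through the two-factor statement of Lemma~\ref{l Hom decompo}.
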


\begin{proof}
Let us show (1). 
By the Artin--Wedderburn theorem, we obtain the isomorphism (\ref{e1 Artin Wedderburn}), where 
$\ell_i$ is a positive integer and 
$\kappa_i$ is a division ring for every $i$. 
Note that 
\[
Z(\Hom(F, F)) \simeq Z(\kappa_1) \times \cdots \times 
Z(\kappa_n).    
\]
By $\dim_{\kappa} \Hom(F, F) < \infty$, 
we get $[\kappa_i : \kappa] = \dim_{\kappa} \kappa_i <\infty$. 

It suffices to show that each $\kappa_i$ is commutative. 
Note that 
$\kappa_i$ is a central simple algebra over a field $\kappa_i' :=Z(\kappa_i)$ satisfying $\kappa \subset \kappa'_i \subset \kappa_i$. 
Since $\kappa$ is a $C_1$-field, so is $\kappa'_i$. 
We then get  $\Br(\kappa'_i)=0$ \cite[Thoerem 1.2.12]{CTS21}, which implies 
that $\kappa_i = \kappa'_i$. 
Thus (1) holds. 
The assertion (2) follows from Lemma \ref{l Hom decompo} below. 
\qedhere



\end{proof}

\begin{lem}\label{l Hom decompo}
Let $X$ be a projective scheme over a field $\kappa$. 
For a coherent sheaf $F$, assume that we have 
a ring isomorphism 
\[
\Hom(F, F) = M_1 \times M_2, 
\]
where each $M_i$ is a (possibly non-commutative) nonzero subring 
of $\Hom(F, F)$. 
Then there exist nonzero coherent subsheaves $F_1$ and $F_2$ of $F$ such that $F = F_1 \oplus F_2$, 
$\Hom(F_1, F_2) = \Hom(F_2, F_1)=0$, and 
$\Hom(F_i, F_i) = M_i$ for each $i \in \{1, 2\}$. 
\end{lem}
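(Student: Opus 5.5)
The plan is to use the central idempotents coming from the product decomposition. Let $e_1 := (1,0)$ and $e_2 := (0,1)$ in $\Hom(F,F) = M_1 \times M_2$. Then $e_1 + e_2 = \mathrm{id}_F$, $e_i^2 = e_i$, $e_1 e_2 = e_2 e_1 = 0$, and each $e_i$ is central in $\Hom(F,F)$. Set $F_i := \Im(e_i) \subset F$; this is a coherent subsheaf, being the image of a morphism of coherent sheaves. The standard idempotent argument gives $F = F_1 \oplus F_2$: every local section decomposes as $s = e_1 s + e_2 s$, and if $s \in F_1 \cap F_2$ then $s = e_1 t$, so $e_2 s = e_2 e_1 t = 0$, while $s \in F_2$ gives $e_2 s = s$, hence $s = 0$. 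Moreover $e_i$ restricts to the identity on $F_i$ and to zero on $F_j$ for $j \neq i$. Each $F_i$ is nonzero because $e_i \neq 0$, which holds since $M_i$ is a nonzero ring.

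Next, I would show $\Hom(F_1, F_2) = 0$. Take $\varphi \in \Hom(F_1, F_2)$ and extend it to $\tilde\varphi := \iota_2 \circ \varphi \circ \pi_1 \in \Hom(F,F)$, where $\pi_1$ and $\iota_2$ are the projection and inclusion for the splitting above. Then $\tilde\varphi = e_2 \tilde\varphi e_1$. Since $e_1$ is central, $e_2 \tilde\varphi e_1 = e_2 e_1 \tilde\varphi = 0$, so $\varphi = 0$. The same argument with the roles swapped gives $\Hom(F_2, F_1) = 0$.

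Finally, I would identify $\Hom(F_i, F_i)$ with $M_i$. Because the cross Homs vanish, $\Hom(F,F) = \Hom(F_1,F_1) \times \Hom(F_2,F_2)$ as rings, written block-diagonally. On the other side, $M_i = e_i \Hom(F,F) = e_i \Hom(F,F) e_i$, since $e_i$ is the unit of $M_i$ and central. The map $\psi \mapsto \psi|_{F_i}$, with the image lying in $F_i$, is a ring isomorphism $e_i \Hom(F,F) e_i \to \Hom(F_i,F_i)$. Its inverse is $\varphi \mapsto \iota_i \varphi \pi_i$. Under these maps the decomposition $M_1 \times M_2$ corresponds exactly to the block decomposition.

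There is no real obstacle here; the only point needing care is centrality of the $e_i$. It holds because $(1,0)$ commutes with every element of a product ring. This is what makes the cross Homs vanish, whereas a merely idempotent decomposition would not.
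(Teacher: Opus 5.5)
Your proposal is correct and follows essentially the same route as the paper: take the central idempotents $(1,0)$ and $(0,1)$, let $F_i$ be their images, use centrality to show that every endomorphism of $F$ is block-diagonal (so the cross Homs vanish), and then identify $\Hom(F_i,F_i)$ with $M_i$. You make the last two steps explicit via extension by zero and the corner-ring isomorphism $e_i\Hom(F,F)e_i\simeq\Hom(F_i,F_i)$, whereas the paper writes $\varphi=\pi_1\varphi\pi_1+\pi_2\varphi\pi_2$ (where $\pi_i$ is its name for the idempotents) and compares $M_1\times M_2\subset\Hom(F_1,F_1)\times\Hom(F_2,F_2)$; this difference is only cosmetic.
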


\begin{proof}
For $\pi_1 := (1, 0), \pi_2 := (0, 1) \in M_1 \times M_2 = \Hom (F, F)$, 
we set 
\[
F_1 := \pi_1(F) \qquad\text{and}\qquad 
F_2 := \pi_2(F). 
\]
Note that 
$\pi_1^2 = \pi_1, \pi_2^2 = \pi_2$, 
$\pi_1\pi_2 = \pi_2\pi_1 =0$, and $\pi_1 + \pi_2 =1 (={\rm id}_F)$. 

Let us show that $F_1 + F_2 = F$ and $F_1 \cap F_2 = 0$. 
The former one follows from $F_1 + F_2 =\pi_1(F) + \pi_2(F)= (\pi_1 + \pi_2)(F) = {\rm id}_F(F) = F$. 
To prove $F_1 \cap F_2 = 0$, 
fix an affine open subset $U$ of $X$ and 
pick an element $y \in F_1(U) \cap F_2(U)$. 
We can write $y = \pi_1(x_1) =\pi_2(x_2)$ 
for some $x_1, x_2 \in F(U)$. 
Then it holds that 
\[
\pi_1(x_1) = \pi_1^2(x_1) = \pi_1(\pi_1(x_1)) = \pi_1(\pi_2(x_2)) =0. 
\]
Hence $F_1 \cap F_2 = 0$. 
Therefore, we get $F = F_1 \oplus F_2$. 

Take an element $\varphi =(\varphi_1, \varphi_2) \in M_1 \times M_2 = \Hom(F, F)$, where $\varphi_1 \in M_1$ and $\varphi_2 \in M_2$. 
We then get 
\[
\varphi \pi_1 = (\varphi_1, \varphi_2)(1, 0)  = (\varphi_1, 0) = (1, 0)  (\varphi_1, \varphi_2)= \pi_1\varphi. 
\]
Similarly, we obtain $\varphi \pi_2 = \pi_2 \varphi$. 
Therefore, we have 
\[
\varphi = \varphi \pi_1 + \varphi \pi_2 = 
\varphi \pi_1^2 + \varphi\pi_2^2 = \pi_1 \varphi \pi_1 + \pi_2 \varphi \pi_2. 
\]
For the natural inclusion $\iota_i : F_i \hookrightarrow F$, 
we set $\psi_i := \pi_i \varphi \iota_i: F_i \to F_i$. 
For  an affine open subset $U$ of $X$, $z_1 \in F_1(U)$, 
and $z_2 \in F_2(U)$, 
we get 
\[
\varphi(z_1, z_2) = 
( \pi_1 \varphi \pi_1 + \pi_2 \varphi \pi_2)(z_1, z_2) 
= \pi_1\varphi (z_1, 0) + \pi_2\varphi(0, z_2) = 
\psi_1(z_1) + \psi_2(z_2). 
\]
Hence $\Hom(F_1, F_2)=0$ and $\Hom(F_2, F_1)=0$. 
In particular, we get 
\[
\Hom(F, F) \xrightarrow{\simeq} \Hom(F_1, F_1) \times \Hom(F_2, F_2), \qquad \varphi \mapsto ( \pi_1  \varphi \iota_1, 
\pi_2 \varphi \iota_2). 
\]

It is enough to prove $\Hom(F_i, F_i) = M_i$   for each $i \in \{1, 2\}$. 
For $\varphi = (\varphi_1, 0) \in M_1 = M_1\times \{0\} \subset M_1 \times M_2 = \Hom(F, F)$, 
we get 
\[
\varphi =( \varphi_1, 0) = (1, 0) (\varphi_1, 0) = \pi_1 \varphi. 
\]
Then $\pi_2 \varphi \iota_2 = \pi_2 (\pi_1 \varphi) \iota_2 =0$, 
which implies  $\Im(\varphi) = \Im(\pi_1 \varphi) \subset \Im(\pi_1) 
=F_1$. 
Hence we obtain $M_1 \subset \Hom(F_1, F_1)$. 
By symmetry, we get $M_2 \subset \Hom (F_2, F_2)$. 
Then it holds that 
\[
\Hom(F_1, F_1) \times \Hom(F_2, F_2) = \Hom(F, F) = M_1 \times M_2 \subset \Hom(F_1, F_1) \times \Hom(F_2, F_2), 
\]
which implies $M_i = \Hom(F_i, F_i)$ for each $i \in \{1, 2\}$, as required.
\qedhere
\end{proof}

\begin{cor}\label{c Artin Wedderburn}
Let $X$ be a projective scheme  over a $C_1$-field $\kappa$ and 
let $F$ be a coherent sheaf on $X$. 
Then one and only one of the following holds. 
\begin{enumerate}
\item 
There exists an $\MO_X$-linear endomorphism $\varphi : F \to F$ such that 
$\Ker(\varphi) \neq 0$ and $\Im(\varphi) \neq 0$. 
\item 
A $\kappa$-algebra isomorphism 
\[
\Hom(F, F) \simeq \kappa'
\]
holds for some field $\kappa'$ which is a finite  extension of $\kappa$. 
\end{enumerate}
\end{cor}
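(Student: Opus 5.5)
We argue by a dichotomy on whether every nonzero endomorphism of $F$ is an automorphism. First, (1) and (2) are mutually exclusive. If (2) holds, then $\Hom(F,F)$ is a field, so every nonzero $\varphi$ is invertible in $\Hom(F,F)$, hence an isomorphism of sheaves, and $\Ker(\varphi)=0$. The zero map has $\Im(\varphi)=0$. So no $\varphi$ as in (1) exists.

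Now assume (1) fails; we show (2). Note that $F\neq 0$ is implicit here: if $F=0$, then $\Hom(F,F)=0$ is not a field and (1) also fails. The paper presumably intends $F\neq 0$, and we will note that assumption. Failure of (1) means every $\varphi\in\Hom(F,F)$ satisfies either $\Ker(\varphi)=0$ or $\Im(\varphi)=0$. So every nonzero $\varphi$ is injective.

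The key step is to upgrade injectivity to invertibility, using that $A:=\Hom(F,F)$ is a finite-dimensional $\kappa$-algebra, since $X$ is projective and $F$ is coherent. Left multiplication $\varphi\circ -\colon A\to A$ is $\kappa$-linear. It is injective because $\varphi\circ\psi=0$ forces $\Im\psi\subset\Ker\varphi=0$. By finite-dimensionality it is therefore surjective, so there is $\psi$ with $\varphi\psi=\mathrm{id}_F$. Then $\varphi$ is surjective as a sheaf map, hence bijective, hence an isomorphism. Thus $A$ is a division ring.

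In particular $A$ is semisimple with a single simple factor, so Proposition \ref{p Artin Wedderburn}(1) applies with $n=1$ and $\ell_1=1$; the latter because a division ring has no nontrivial idempotents, whereas $M_\ell(\kappa_1)$ does for $\ell\geq 2$. That proposition gives $A\simeq\kappa_1$, a finite field extension of $\kappa$. The underlying reason is that the center $Z$ of the division ring is a finite extension of a $C_1$-field, hence itself $C_1$, so $\Br(Z)=0$ and $A=Z$ is commutative.

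The only slightly delicate point is the step from injective to invertible, and it is handled by the dimension count above.
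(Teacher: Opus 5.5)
Your proof is correct, but its key step differs from the paper's.

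The paper splits into cases on $\Hom(F,F)$:
\begin{itemize}
\item if there is a nonzero nilpotent endomorphism, it directly witnesses (1);
\item otherwise $\Hom(F,F)$ is reduced, hence semisimple (citing Lam), and Proposition~\ref{p Artin Wedderburn} gives $\Hom(F,F)\simeq\kappa_1\times\cdots\times\kappa_n$ with all $\ell_i=1$;
\item if $n\geq 2$, the idempotent splitting $F=F_1\oplus F_2$ of Lemma~\ref{l Hom decompo} yields a projection with nonzero kernel and image, giving (1);
\item if $n=1$, this is (2).
\end{itemize}

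You instead show that failure of (1) makes every nonzero endomorphism injective. Finite-dimensionality of $\Hom(F,F)$ then makes left multiplication $\psi\mapsto\varphi\circ\psi$ surjective, which gives a right inverse. So $\Hom(F,F)$ is a division ring, and only at that point do you use the $C_1$ hypothesis, via $\Br(Z)=0$ for the centre $Z$.

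What each route buys:
\begin{itemize}
\item Your route is more elementary and self-contained. It avoids the theorem that reduced finite-dimensional algebras are semisimple, the Wedderburn product decomposition, and the sheaf splitting. It also isolates exactly the Schur-type fact the paper later relies on, namely that nonzero endomorphisms of a $\mu$-stable bundle are automorphisms.
\item The paper's route reuses the idempotent machinery of Lemma~\ref{l Hom decompo}, which it needs anyway for the Galois decomposition in Proposition~\ref{p semisimple1}.
\end{itemize}

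Your remark about $F=0$ is also correct. In that case $\Hom(F,F)=0$ is not a field and no $\varphi$ has $\Ker(\varphi)\neq 0$, so neither alternative holds. The statement therefore tacitly requires $F\neq 0$, and the paper's proof makes the same tacit assumption, since it never addresses the empty product $n=0$.
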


\noindent 
If $\kappa$ is algebraically closed, then the condition (2) is equivalent to $\dim_\kappa \Hom(F, F) =1$.

\begin{proof}
If (2) holds, then every nonzero endomorphism $\varphi : F \to F$ is an automorphism, and hence (1) does not hold.  
Therefore, at least one of (1) and (2) fails. 
In what follows, we prove that (1) or (2) holds.

If there exists a nonzero nilpotent element of $\Hom(F, F)$, 
then (1) holds. 
In what follows, we assume that 
$\Hom(F, F)$ has no nonzero nilpotent element. 
Then $\Hom(F, F)$ is semisimple 
\cite[Definition 4.7, Theorem 4.12, Theorem 4.14]{Lam01}. 
By Proposition \ref{p Artin Wedderburn}(1), 
\[
\Hom(F, F) \simeq M_{\ell_1}(\kappa_1) \times \cdots \times M_{\ell_n}(\kappa_n) 
\]
for some positive integers $\ell_1, ..., \ell_n$ and 
finite field extensions $\kappa_1, ..., \kappa_n$ of $\kappa$. 
As $\Hom(F, F)$ has no nonzero nilpotent element, 
we get $\ell_1 = \cdots = \ell_n =1$. 
If $n \geq 2$, then we have $F = F_1 \oplus F_2$ for some nonzero coherent subsheaves $F_1$ and $F_2$ of $F$ (Proposition \ref{p Artin Wedderburn}(2)). 
Then (1) holds. 
If $n=1$, then (2) holds. 
\qedhere


\end{proof}



\subsection{Lifts and general members}

\begin{dfn}
\begin{enumerate}
\item Let $W$ be a complete discrete valuation ring. 
We say that $R$ is a {\em finite extension} of $W$ if 
$R$ is a complete discrete valuation ring 
such that $W$ is a subring of $R$ and $R$ is a finitely generated $W$-module. 
\item Given a finite extension $R$ of the ring $W(k)$ of Witt vectors, its residue field is equal to $k$. 
For an $R$-scheme $\cY$, we set $\cY_k := \cY \times_{\Spec R} \Spec k$. 
\end{enumerate}
\end{dfn}


\begin{dfn}
Fix a finite extension $R$ of $W(k)$. 
Let $Y$ be a projective scheme over $k$ and let $S$ be a closed subscheme of $Y$. 
\begin{enumerate}
\item
We say that $\cY$ is an $R$-{\em lift} of $Y$  if 
$\cY$ is a flat projective $R$-scheme satisfying $\cY_k \simeq Y$. 
\item 
We say that $\cS \subset \cY$ is an $R$-{\em lift} of $S \subset Y$ 
if $\cY$ and $\cS$ are flat projective $R$-schemes, 
$\cS$ is a closed subscheme of $\cY$, and there exists the following digram in which every equare is cartesian:  
\[
\begin{tikzcd}
S \arrow[r] \arrow[d, hook] & \cS \arrow[d, hook] \\
Y \arrow[r] \arrow[d]  & \cY\arrow[d]\\
\Spec k \arrow[r] & \Spec R,
\end{tikzcd}
\]
where the vertical arrows and $\Spec k \to \Spec R$ are the induced morphisms. 
\item 
Let $P$ be a closed point $Y$. 
We say that $\cP \subset \cY$ is an 
{\em $R$-lift} of $P \in Y$ if 
$\cP \subset \cY$ is an $R$-lift of $\{P\} \subset Y$ such that 
the induced composite morphism $\cP\hookrightarrow \cY \to \Spec R$ is an isomorphism. 
In particular, $\cP \subset \cY$ is a section of $\cY \to \Spec R$. 
\end{enumerate}
\end{dfn}

\begin{lem}
Fix a finite extension $R$ of $W(k)$. 
Let $Y$ be a smooth projective variety over $k$ and let $\cY$ be an 
$R$-lift of $Y$. 
Take a closed point $P \in Y$. 
Then there exists an $R$-lift $\cP \subset \cY$ of $P \in Y$. 
\end{lem}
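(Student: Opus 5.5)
We will prove the lemma by lifting a regular system of parameters at $P$ and cutting out a section by Hensel's lemma. First observe that $\cY$ is smooth over $R$ along the closed fibre. Indeed, $\cY \to \Spec R$ is flat and projective, and its closed fibre $\cY_k \simeq Y$ is smooth over $k$. The fibrewise criterion for smoothness then gives that $\cY \to \Spec R$ is smooth at every point of $Y \subset \cY$. In particular, $\cY$ is regular at $P$, of dimension $n+1$ where $n := \dim Y$. Moreover, if $\varpi$ is a uniformiser of $R$, then $\MO_{\cY, P}/\varpi \MO_{\cY,P} = \MO_{Y,P}$, which is regular of dimension $n$.

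Next we construct the section. Choose a regular system of parameters $\bar f_1, \dots, \bar f_n$ of $\MO_{Y,P}$, and lift these to elements $f_1, \dots, f_n$ of $\MO_{\cY,P}$. After shrinking, we may regard them as regular functions on an affine open neighbourhood $\cV \subset \cY$ of $P$. Then $\varpi, f_1, \dots, f_n$ is a regular system of parameters of $\MO_{\cY,P}$. Let $\cZ \subset \cV$ be the closed subscheme defined by $f_1 = \cdots = f_n = 0$. The local ring $\MO_{\cZ,P} = \MO_{\cY,P}/(f_1,\dots,f_n)$ is a regular local ring of dimension one, and its maximal ideal is generated by $\varpi$. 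It is therefore a discrete valuation ring that is flat over $R$. Its residue field is $k$, and $\varpi$ is a uniformiser.

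It remains to pass from this local statement to an honest section, which is the main technical point. Equivalently, we need $\MO_{\cZ,P}$, or its completion, to be isomorphic to $R$. The morphism $\cZ \to \Spec R$ is étale at $P$. It is flat at $P$, since $\MO_{\cZ,P}$ is a torsion-free module over the discrete valuation ring $R$. It is unramified at $P$, since the closed fibre $\cZ_k = \Spec k$ near $P$ is the reduced point, because $\bar f_1,\dots,\bar f_n$ generate $\m_{Y,P}$; this is the standard Jacobian-criterion argument for smooth morphisms. Its residue field extension at $P$ is trivial, because $k$ is the residue field of $R$. Since $R$ is complete, hence henselian, an étale $R$-scheme with a $k$-point over the closed point admits a unique section through that point. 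This gives a morphism $\sigma : \Spec R \to \cZ \subset \cY$ with $\sigma(\text{closed point}) = P$.

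Finally, we check that the section has the required properties. Because $\cY$ is separated over $R$, the image $\cP := \sigma(\Spec R)$ is a closed subscheme of $\cY$, and the composite $\cP \hookrightarrow \cY \to \Spec R$ is an isomorphism. In particular $\cP$ is flat and projective over $R$. Its closed fibre is $\sigma(\Spec k) = \{P\}$ with its reduced structure. Hence $\cP \subset \cY$ is an $R$-lift of $P \in Y$.
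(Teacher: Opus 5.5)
Your proof is correct, but it takes a different route from the paper's.

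The paper's argument is one line. Since $\cY \to \Spec R$ is smooth, it is formally smooth, so the $k$-point $P$ lifts step by step to a compatible system of $R/\m^n$-points of $\cY$. These assemble to an $R$-point because $R$ is complete and all of them factor through a fixed affine neighbourhood of $P$; the paper leaves this last step implicit.

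You instead use the local structure of a smooth morphism. Lifting a regular system of parameters of $\MO_{Y,P}$ cuts out a closed subscheme $\cZ$ that is \'etale over $R$ at $P$ with trivial residue field extension. The henselian property of $R$ then produces the section directly, so no limit over $R/\m^n$ is needed.

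What each approach buys:
\begin{itemize}
\item The paper's version is shorter and purely infinitesimal.
\item Yours only needs $R$ to be henselian rather than complete, though completeness is available here anyway.
\item Yours makes explicit two things the paper takes for granted: that $\cY$ is smooth over $R$ along the closed fibre (via the fibrewise criterion), and that the image of the section, being a section of a separated morphism, is a closed subscheme with the correct closed fibre $\{P\}$.
\item Yours also gives a little more control: the lift through $P$ inside $V(f_1,\dots,f_n)$ is unique, so a lift can be prescribed by choosing lifts of local parameters at $P$.
\end{itemize}
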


\begin{proof}
Since $\cY \to \Spec R$  is smooth, it is formally smooth  \cite[\href{https://stacks.math.columbia.edu/tag/02H6}{Tag02H6}]{SP}. 
Hence the $k$-point $P\in Y(k)$ lifts 
successively to compatible $R/\m^n$-points of $\cY$. 
\end{proof}

\begin{prop}\label{p general lift}
Fix a finite extension $R$ of $W(k)$.  
Let $Y$ be a smooth projective variety over $k$ and let  
$\cY$ be an $R$-lift of $Y$. 
Set $Y_Q := \cY \times_R Q$ for $Q := \Frac R$. 
Then the following hold: 
\begin{enumerate}
\item An $R$-lift of a general closed point of $Y$ is a general closed point of $Y_Q$. 
\item An $R$-lift of a very general closed point of $Y$ is a very general closed point of $Y_Q$. 
\end{enumerate}
\end{prop}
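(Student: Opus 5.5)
The plan is to reduce both statements to one observation: the closure in $\cY$ of a proper closed subset of $Y_Q$ meets the special fibre in a proper closed subset of $Y$. First we fix the precise meaning. Statement (1) says: for every nonempty open subset $V \subset Y_Q$ there is a nonempty open subset $U \subset Y$ such that, for every closed point $P \in U$ and every $R$-lift $\cP \subset \cY$ of $P \in Y$, the generic point of $\cP \simeq \Spec R$ lies in $V$. This generic point is a $Q$-rational point, hence a closed point of $Y_Q$. Statement (2) is the same with $V$ replaced by the complement of a countable union $\bigcup_m Z_m$ of proper closed subsets of $Y_Q$, and $U$ by the complement of a countable union of proper closed subsets of $Y$.

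For (1), set $Z := Y_Q \setminus V$, a proper closed subset of $Y_Q$. We may assume $Z \neq \emptyset$. Let $\cZ$ be the scheme-theoretic closure of $Z$ in $\cY$, taken with reduced structure and componentwise.
\begin{enumerate}
\item[(a)] Each irreducible component of $\cZ$ is integral and dominates $\Spec R$. It is therefore $R$-torsion free, hence flat over $R$, and it is projective.
\item[(b)] By flatness and properness, the fibre dimension is constant: each component of $\cZ_k$ has dimension at most $\dim Z$.
\item[(c)] Since $Y_Q$ is smooth and geometrically irreducible of dimension $\dim Y$ (because $\cY$ is smooth and projective over $R$ with connected special fibre), we have $\dim Z < \dim Y$. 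Hence $\cZ_k$ is a proper closed subset of the irreducible variety $Y$.
\end{enumerate}
Put $U := Y \setminus \cZ_k$, which is a nonempty open subset of $Y$.

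Now let $P \in U$ and let $\cP$ be an $R$-lift of $P$. Suppose the generic point $\eta$ of $\cP$ lies in $Z$. Since $\cP \simeq \Spec R$ is integral, it is the closure of $\eta$ in $\cY$, so $\cP \subset \cZ$. Its closed point $P$ then lies in $\cZ \cap Y = \cZ_k$, contradicting $P \in U$. Hence $\eta \in V$, which proves (1).

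For (2), apply the construction above to each $Z_m$ to obtain proper closed subsets $\cZ_{m,k} \subsetneq Y$. Then take $U := Y \setminus \bigcup_m \cZ_{m,k}$. The same argument shows that any lift of a point of $U$ avoids every $Z_m$.

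The only nonroutine point is the dimension bound on the special fibre of the closure, i.e.\ steps (a) and (b). I expect this to be the main obstacle, and I would settle it by invoking flatness of $R$-torsion-free modules over a DVR together with constancy of fibre dimension for flat proper morphisms, for instance via the Stacks Project.
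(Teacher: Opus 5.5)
Your proposal is correct and follows essentially the same route as the paper. The paper also takes the $R$-flat closure $\mathcal{Z}$ of the bad locus $Z_Q$ (citing Hartshorne III, Proposition 9.8 for flatness) and uses $\dim \mathcal{Z}_k = \dim Z_Q < \dim Y$ to see that $\mathcal{Z}_k$ is a proper closed subset of $Y$. It then argues, as you do, that a lift $\mathcal{P}$ with $\mathcal{P}_Q \in Z_Q$ would satisfy $\mathcal{P} \subset \mathcal{Z}$ and hence $P \in \mathcal{Z}_k$; the very general case (2) is handled identically.
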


\noindent 
The precise statement of (1) is (1)' below. 
\begin{enumerate}
\item[(1)']
Fix a proper closed subset $Z_Q \subset Y_Q$. 
Then there exists a proper closed subset $Z \subset Y$ such that 
if $P \in Y \setminus Z$ is a closed point and $\cP \subset \mathcal Y$ is an $R$-lift of $P \in Y$, 
then it holds that $\cP_Q \in Y_Q \setminus Z_Q$.  
\end{enumerate}

\begin{proof}
Since the proofs of (1) and (2) are identical, we only prove (1)'. 
We equip $Z_Q$ with the reduced scheme structure. 
By the properness of Hilbert scheme or \cite[Chapter III, Proposition 9.8]{Har77}, 
we can find an $R$-flat closed subscheme $\mathcal Z \subset \cY$ satisfying $\cZ_Q = Z$. 
Set $Z := \mathcal Z_k$. By $\dim Z =\dim Z_Q < \dim Y_Q = \dim Y$, 
$Z$ is set-theoretically a proper closed subset of $Y$. 

Fix a closed point $P \in Y \setminus Z$ 
and an $R$-lift $\cP \subset \mathcal Y$ of $P \in Y$. 
We have $P_Q := \cP_Q \in \cY_Q$. 
Suppose that $P_Q \in Z_Q$. 
Then $\cP = \overline{P}_Q \subset \overline{Z}_Q \subset \mathcal Z$, 
which would lead to a contradiction $P = \cP_k \in \mathcal Z_k =Z$. 
\end{proof}


\section{Brill-Noether generality}\label{s BN general}

\subsection{Weak  Brill-Noether generality and primitivity of $-K_X|_S$}

\begin{prop}[Weak BN-generality]\label{p weak BN generality}
Let $X$ be a prime Fano threefold  of genus $g$ such that $|-K_X|$ is very ample. 
Let $S$ be a smooth member of $|-K_X|$ and set $H := -K_X|_S$. 
Let $M$ be a Cartier divisor on $S$. 
Then 
\[
h^0(M) h^0(H-M) \leq g+1. 
\]
\end{prop}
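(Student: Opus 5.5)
Following the hint in the introduction, I would prove this with the elementary transform $F := F_{X,S,\MO_S(M)}$ on $X$ and the $\mu$-stability provided by Proposition \ref{p ele tf stable}.

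First, the reductions. If $h^0(M)=0$ or $h^0(H-M)=0$ there is nothing to prove, so assume both $M$ and $H-M$ are effective. I would then reduce to the case where $|M|$ and $|H-M|$ are base point free. The mechanism is to remove fixed components: write $|M| = |M'| + \Phi$ with $\Phi$ the fixed part. Then $h^0(M)=h^0(M')$ and $h^0(H-M) \le h^0(H-M')$, so replacing $M$ by $M'$ only increases the product; I would repeat this alternately for $M$ and $H-M$. On a smooth K3 surface a moving part is base point free unless it is a multiple of an elliptic pencil, and in that case $h^0(M)=a+1$ for $M = aE$. I would treat that exceptional case by a direct Riemann--Roch computation using $E \cdot H \ge 1$; that bound comes from $\Pic X = \Z K_X$ only through ampleness of $H$. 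After these reductions, $L := \MO_S(M)$ is globally generated and $\MO_X(S)|_S\otimes L^{-1} = \MO_S(H-M)$ is globally generated.

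Next, set $F := F_{X,S,L}$ with $n := h^0(M) = \rank F$. By Proposition \ref{p ele tf}, $c_1(F) = -S = K_X$ and $c_2(F) = M$ (pushed forward to $X$). Since $H^1(X,\MO_X)=0$, $F^\vee$ is globally generated, and Proposition \ref{p ele tf stable} (with $\Pic X = \Z S$) makes $F$ $\mu$-stable. Dualising the defining sequence gives
\[
0 \to H^0(M)^\vee \otimes \MO_X \to F^\vee \to \MO_S(H-M) \to 0,
\]
and hence $h^0(F^\vee) = n + h^0(H-M)$, using $H^1(\MO_X)=0$.

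The key estimate is to bound $\chi(F,F) = \sum_i (-1)^i \dim\Ext^i(F,F)$ from above. Stability gives $\hom(F,F)=1$. By Serre duality, $\Ext^3(F,F) = \Hom(F,F(K_X))^\vee = 0$, since $F(K_X)$ is stable of smaller slope. I would also want $\Ext^2(F,F) = \Ext^1(F,F(K_X))^\vee$, or at least control of its sign, so that $\chi(F,F) \le 1$, or at least $\le 2$, which matches the "$+1$" in $g+1$. An alternative that avoids this is to compute $\chi(F,F)$ directly by Riemann--Roch on $X$ from $\rank F = n$, $c_1 = K_X$, $c_2 = M$. Setting $a := h^0(H-M)$ and using $h^0(M) = n$, $\chi(\MO_S(M)) = 2 + M^2/2$, and $H\cdot M$, the result should take the form $\chi(F,F) = -2(\text{stuff}) + \ldots$, reducing to an inequality of the shape $na \le g+1$ provided $h^1(M)$ is controlled.

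The main obstacle is the $\Ext^2$ term. To handle it, I would instead pair $F$ against $\MO_X$ and against $F^\vee$ via $\Hom(F,F) \supset$ the image of $H^0(F^\vee)\otimes\ldots$. Concretely, I would restrict $F$ to $S$ and use $F|_S$: there is an exact sequence $0 \to \MO_S(-M)... \to F|_S \to \ldots$ on the K3 surface. There Mukai's inequality $v^2 \ge -2$ for a simple bundle, applied to $F|_S$ (whose simplicity would need to follow from stability restricted to $S$, or be replaced by the dimension count $\hom \le 2$), gives $v(F|_S)^2 \ge -2\hom(F|_S,F|_S)$. Computing $v(F|_S) = (n, -H, s)$ and expanding $v^2 = H^2 - 2ns$, this should yield exactly $n\cdot h^0(H-M) \le g + 1$ once $\hom(F|_S,F|_S) \le 2$ is established from $\hom(F,F)=1$ and $\Ext^1(F,F(-S))$-type vanishing. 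That bound on $\hom(F|_S,F|_S)$ is the step I expect to be hardest.
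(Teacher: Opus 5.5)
\textbf{There is a genuine gap.} Your set-up agrees with the paper's: reduce to $|M|$ base point free, take $F=F_{X,S,M}$, and get $\hom(F,F)=1$ from Proposition \ref{p ele tf stable}. But the proposal never turns this into the inequality. All three routes you sketch stop at the same unproved point, a bound on $\Ext^2(F,F)$.

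By adjunction, $\Hom_S(F|_S,F|_S)=\Hom_X(F,i_*(F|_S))$. Hence
\[
\hom(F|_S,F|_S)\le 1+\dim\Ext^1(F,F(K_X))=1+\dim\Ext^2(F,F)
\]
by Serre duality. So the ``$\Ext^1(F,F(-S))$-type vanishing'' you invoke is exactly $\Ext^2(F,F)=0$, and you would need at least $\dim\Ext^2(F,F)\le 1$. You give no argument for this. The $\chi(F,F)$ route has the same problem, since $\chi(F,F)\le 2$ also amounts to controlling $\Ext^2$.

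Even granting that bound, the bookkeeping does not give $n\,h^0(H-M)\le g+1$ directly. One finds $v(F|_S)=(n,-H,\,n+g-1-M\cdot H)$. For $0\ne M\ne H$ with $M$ and $H-M$ effective, Mukai's inequality then yields
\[
n\bigl(h^0(H-M)+h^1(M)-h^1(H-M)\bigr)\le g-1+\hom(F|_S,F|_S).
\]
Similarly, $\chi(F,F)=1-g+n\,h^0(H-M)+n\bigl(h^1(M)-h^1(H-M)\bigr)$. So the $h^1$ terms must also be controlled, with the case $M=aE$ handled separately. Incidentally, on a K3 surface the moving part of a complete linear system is always base point free, including $|aE|$; what fails for $aE$ is $h^1=0$.

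\textbf{The missing idea} is to compute $\Hom(F,F)$ through the torsion sheaf $i_*M$ rather than through $\Ext^2$. Since $H^0(F)=H^1(F)=0$ (Proposition \ref{p ele tf}), applying $\Hom(-,F)$ to $0\to F\to H^0(M)\otimes\MO_X\to i_*M\to 0$ gives $\Hom(F,F)\simeq\Ext^1(i_*M,F)$. Applying $\Hom(i_*M,-)$ to the same sequence gives an exact sequence
\[
0\to \Hom(i_*M,i_*M)\to \Ext^1(i_*M,F)\to H^0(M)\otimes\Ext^1(i_*M,\MO_X)\to \Ext^1(i_*M,i_*M).
\]
Here $\Ext^1(i_*M,\MO_X)\simeq H^0(S,H-M)$ by Grothendieck duality. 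Also $\dim\Ext^1(i_*M,i_*M)\le h^0(S,\MO_S(H))=g+1$, by the local-to-global spectral sequence, $H^1(S,\MO_S)=0$, and $\cExt^1(i_*M,i_*M)\simeq i_*\MO_S(H)$ (Lemma \ref{l Ext^1 MM}).

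Hence $1=\hom(F,F)\ge 1+h^0(M)h^0(H-M)-(g+1)$, which is the claim. This uses only that $|M|$ is base point free. It needs neither $|H-M|$ base point free (so your alternating reduction is unnecessary) nor any vanishing of $h^1$.
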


\begin{proof}
We now reduce the problem to the case when $|M|$ is base point free. 
Let $M = M' +E$ be the decomposition into the mobile part $M'$ and the fixed part $E$ of $|M|$. 
By \cite[Proposition 3.6 and Proposition 3.10]{FanoI}, $|M'|$ is base point free. 
We have 
\[
h^0(M') h^0(H-M') = 
h^0(M) h^0(H+E-M) \geq h^0(M) h^0(H-M). 
\]
Therefore, it is enough to show the assertion by assuming that $|M|$ is base point free.

Suppose that $h^0(M) h^0(H-M) > g+1$ and $|M|$ is base point free. 
It suffices to derive a contradiction. 
By $h^0(H) = g+1$, we get $H^0(M-H)=0$. 
Set $F := F_{X, S, M}$ (Definition \ref{d ele tf}), i.e., 
$F$ is the vector bundle given by 
\begin{equation}\label{e1 weak BN generality}
0 \to F \to H^0(M) \otimes_k \MO_X \to M \to 0.     
\end{equation}
By Proposition \ref{p ele tf stable}, 
$F$ is $\mu$-stable, and hence $\dim_k \Hom(F, F) =1$ (cf.\ Corollary \ref{c Artin Wedderburn}). 



In order to derive a contradiction, 
let us show $\dim \Hom(F, F) \geq 2$. 
By applying $\Hom(-, F)$ to (\ref{e1 weak BN generality}), we obtain 
\[
\Hom(F, F) \xrightarrow{\simeq} \Ext^1(M, F), 
\]
because we have $H^0(F)=H^1(F)=0$ (Proposition \ref{p ele tf}). 
Applying $\Hom(M, -)$ to (\ref{e1 weak BN generality}), we get 
\[
0 \to \Hom_{\MO_X}(i_*M, i_*M) \to \Ext^1(i_*M, F) \to 
\Ext^1(i_*M, H^0(M) \otimes \MO_X) \to \Ext^1_{\MO_X}(i_*M, i_*M). 
\]
We have $\Hom_{\MO_X}(i_*M, i_*M) \simeq \Hom_{\MO_S}(M, M) \simeq k$. 
It holds that 
\[
\Ext^1_{\MO_X}(i_*M, \MO_X) \simeq H^0(S, \cE xt^1(i_*M, \MO_X)) 
\simeq H^0(S, -K_X|_S -M) = H^0(S, H-M), 
\]
because Grothendieck duality  implies 
\[
R\cHom_{\MO_X}(i_*M, \omega_X) \simeq 
i_*R\cHom(M, \omega_S)[-1] \simeq i_*\cHom(M, \MO_S)[-1]. 
\]
Therefore, we get 
\[
\dim_k \Ext^1(i_*M, H^0(M) \otimes \MO_X)  = h^0(M)h^0(H-M). 
\]

It suffices to show $\dim \Ext^1_{\MO_X}(i_*M, i_*M) \leq g+1$. 
The local-to-glocal spectral sequence for Ext induces an exact sequence 
\[
0 \to H^1(  \cH om_{\MO_X}(i_*M, i_*M)) \to \Ext^1_{\MO_X}(i_*M, i_*M) 
\to H^0( \cE xt^1_{\MO_X}(i_*M, i_*M)). 
\]
We have $H^1(  \cH om_{\MO_X}(i_*M, i_*M)) \simeq H^1(S, \MO_S)=0$. 
We then get 
$\dim \Ext^1_{\MO_X}(i_*M, i_*M) \leq g+1$ by 
\[
h^0(X, \cE xt^1_{\MO_X}(i_*M, i_*M)) 
\overset{(\star)}{\simeq} h^0(X, i_*\MO_S(H)) \simeq h^0(S, \MO_S(H)) = g+1. 
\]
where $(\star)$ follows from Lemma \ref{l Ext^1 MM} below. 
\end{proof}

\begin{lem}\label{l Ext^1 MM}
Let $X$ be a variety over a field and 
let $S$ be a nonzero effective Cartier divisor 
with the induced closed immersion $i : S \hookrightarrow X$. 
Take a line bundle $M$ on $S$. 
Then 
\[
\cExt^1_{\MO_X}(i_*M, i_*M) \simeq i_*(\MO_X(S)|_S).
\]
\end{lem}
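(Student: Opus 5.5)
The plan is to compute $\cExt^1_{\MO_X}(i_*M, i_*M)$ from a two-term locally free resolution of $i_*M$. The question is local on $X$, so I first work locally. Choose an open cover $\{U_\alpha\}$ of $X$ with the following properties on each $U_\alpha$: $S$ is cut out by a single equation $f_\alpha$, and $M|_{S\cap U_\alpha}$ is trivial and extends to the trivial line bundle $\MO_{U_\alpha}$. The Koszul resolution then reads
\[
0 \to \MO_{U_\alpha}(-S) \xrightarrow{\cdot f_\alpha} \MO_{U_\alpha} \to i_*\MO_{S\cap U_\alpha} \to 0,
\]
which is exact because $f_\alpha$ is a nonzerodivisor, $S$ being an effective Cartier divisor on an integral scheme.

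Globally, I avoid extending $M$ and do the following instead.
\begin{enumerate}
\item Apply $\cHom_{\MO_X}(-, i_*M)$ to $0\to \MO_X(-S)\to \MO_X \to i_*\MO_S\to 0$. Since $\MO_X$ and $\MO_X(-S)$ are locally free, their higher $\cExt$ sheaves into $i_*M$ vanish. This gives
\[
0\to \cHom(i_*\MO_S, i_*M)\to i_*M \xrightarrow{\cdot f} i_*M\otimes \MO_X(S) \to \cExt^1_{\MO_X}(i_*\MO_S, i_*M)\to 0.
\]
\item The middle map is multiplication by the local equation of $S$. It acts as zero on any $\MO_S$-module, so $\cExt^1_{\MO_X}(i_*\MO_S, i_*M)\simeq i_*(M\otimes \MO_X(S)|_S)$.
\item Pass from $i_*\MO_S$ to $i_*M$. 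Locally $M\simeq \MO_S$, so locally $\cExt^1(i_*M, i_*M)\simeq \cExt^1(i_*\MO_S,i_*\MO_S)\simeq i_*(\MO_X(S)|_S)$.
\end{enumerate}
To make step 3 canonical, use the natural isomorphism $\cExt^1_{\MO_X}(i_*M, i_*M)\simeq \cExt^1_{\MO_X}(i_*\MO_S, i_*M)\otimes_{\MO_S} M^{-1}$. This holds because $\cExt^j_{\MO_X}(i_*(\MO_S\otimes N), -)\simeq \cExt^j_{\MO_X}(i_*\MO_S,-)\otimes N^{-1}$ for a line bundle $N$ on $S$; it is checked on the local trivialisations and is compatible with the transition functions, since the $\cExt$ sheaves are $\MO_S$-modules functorial in the first argument. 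Combining this with step 2 gives $i_*(M\otimes M^{-1}\otimes \MO_X(S)|_S)=i_*(\MO_X(S)|_S)$.

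The main obstacle is the gluing in step 3, namely checking that the local isomorphisms are independent of the chosen trivialisations of $M$. I would handle it by the functoriality argument above: the $\MO_S$-module structure on $\cExt^1(i_*M, i_*M)$ coming from either argument is the same, and an automorphism $u\in\MO_S^*$ of $M$ acts by $u\cdot u^{-1}=1$.

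A cleaner alternative is to use the canonical identification $\cExt^1_{\MO_X}(i_*\MO_S, G)\simeq \cHom_{\MO_S}(\mathcal{I}_S/\mathcal{I}_S^2, G)=G\otimes \MO_X(S)|_S$ for an $\MO_S$-module $G$, which is independent of choices. Then apply $\cHom_{\MO_S}(M,-)$ adjunction: $R\cHom_{\MO_X}(i_*M,i_*M)\simeq i_*R\cHom_{\MO_S}(Li^*i_*M, M)$, with $\mathcal{H}^{-1}(Li^*i_*M)=M\otimes\MO_X(-S)|_S$ and $\mathcal{H}^0(Li^*i_*M)=M$. Since $M$ is locally free on $S$, the $\mathcal{H}^1$ of the right-hand side is exactly $\cHom_{\MO_S}(M\otimes \MO_X(-S)|_S, M)=\MO_X(S)|_S$, which gives the claim directly.
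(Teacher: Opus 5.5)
Your proof is correct. Your main argument takes a different route from the paper's, while your ``cleaner alternative'' is essentially the paper's proof.

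The paper works entirely with derived adjunction, $R\cHom_{\MO_X}(i_*M,i_*M)\simeq i_*R\cHom_{\MO_S}(Li^*i_*M,M)$. It computes $Li^*i_*M\simeq[M\otimes I_S/I_S^2\xrightarrow{0}M]$ and uses the vanishing of the differential to split this complex as $M\oplus(M\otimes I_S/I_S^2)[1]$. You instead read off $\cH^1$ from the two cohomology sheaves using $\cExt^{>0}_{\MO_S}(M,M)=0$. That difference is only cosmetic; note that identifying $\cH^{-1}$ with $M\otimes\MO_X(-S)|_S$ already uses that the differential is zero, i.e.\ your step 2.

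Your main route avoids derived categories. You apply $\cHom_{\MO_X}(-,i_*M)$ to the global resolution of $i_*\MO_S$ and note that $i_*M\to i_*M\otimes\MO_X(S)$ is multiplication by the canonical section of $\MO_X(S)$, hence zero. This gives $\cExt^1_{\MO_X}(i_*\MO_S,i_*M)\simeq i_*(M\otimes\MO_X(S)|_S)$ canonically, and you then twist by $M^{-1}$.

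This is more elementary, but the real content sits in the twisting step. The sheaf $i_*M$ need not admit a global two-term locally free resolution. Moreover, the lemma is used to compute $h^0(X,\cExt^1_{\MO_X}(i_*M,i_*M))$, so a merely local isomorphism would not suffice.

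The most economical way to make this step rigorous is to write down the natural map $\cExt^1_{\MO_X}(i_*\MO_S,G)\otimes_{\MO_S}\cHom_{\MO_S}(M,\MO_S)\to\cExt^1_{\MO_X}(i_*M,G)$, $e\otimes\phi\mapsto(i_*\phi)^*e$. It is well defined and $\MO_S$-balanced because the module structures on $\cExt$ through either argument agree. It is an isomorphism because it is one wherever $M$ is trivial. Your $u\cdot u^{-1}=1$ cocycle check is an equivalent formulation.

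The derived approach has canonicity built in and needs no gluing. Your approach costs this one extra naturality argument but uses nothing beyond the long exact sequence of local $\cExt$ sheaves.
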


\begin{proof}
Set $I_S:=\MO_X(-S)$. 
By $Ri_* =i_*$ and derived adjunction, we have
\[
R\mathcal Hom_{\MO_X}(i_*M,i_*M) 
\simeq
i_*R\mathcal Hom_{\MO_S}(Li^*i_*M,M).
\]
We compute $Li^*i_*M$. The standard locally free resolution
$0\to I_S \xrightarrow{\alpha} \MO_X\to i_*\MO_S\to 0$ 
of $i_*\MO_S$ gives
\[
Li^*i_*M
\simeq 
i_*\MO_S \otimes_{\MO_X}^L i_*M 
\simeq 
\left[
M\otimes_{\MO_S} I_S/I_S^2
\xrightarrow{\beta}
M
\right],
\]
where 
$M$ (resp.\ $M\otimes_{\MO_S} I_S/I_S^2$) is placed in degree $0$ (resp.\ $-1$). 
Note that $\beta=0$, as $\beta$ is induced by $\alpha$. 
Hence
$Li^*i_*M
\simeq
M\oplus \left(M\otimes_{\MO_S} I_S/I_S^2\right)[1]$. 
Therefore, 
\[
R\mathcal Hom_{\MO_S}(Li^*i_*
M,M)
\simeq
R\mathcal Hom_{\MO_S}(M,M)
\oplus
R\mathcal Hom_{\MO_S}\left(M\otimes_{\MO_S} I_S/I_S^2,M\right)[-1].
\]
As $M$ and $I_S/I_S^2$ are invertible sheaves on $S$, 
we get $R\mathcal Hom_{\MO_S}(M,M) \simeq \mathcal Hom_{\MO_S}(M,M)$ and 
$R\mathcal Hom_{\MO_S}\left(M\otimes_{\MO_S} I_S/I_S^2,M\right) \simeq \mathcal Hom_{\MO_S}\left(M\otimes_{\MO_S} I_S/I_S^2,M\right) \simeq 
\MO_X(S)|_S$, 
which implies 
$\cExt^1_{\MO_X}(i_*M,i_*M) = 
i_*\cH^1(R\mathcal Hom_{\MO_S}(Li^*i_*M,M))
\simeq i_*(\MO_X(S)|_S).$
\end{proof}

\begin{rem}
As an alternative proof, 
we can prove Proposition \ref{p weak BN generality} 
by using \cite[Proposition 3.2]{Muk10}. 
\end{rem}

The following simplification of the proof in an earlier draft was suggested by Kanemitsu.

\begin{thm}\label{t primitive -K_X|_S}
Let $X$ be a prime Fano threefold  of genus $g$ such that 
$|-K_X|$ is very ample. 
Let $S$ be a smooth member of $|-K_X|$. 
Then $\MO_X(-K_X)|_S$ is primitive in $\Pic S$ 
$($i.e., there exist no pair $(d, M)$ 
consisting of an integer $d \geq 2$ and a Cartier divisor $M$ on $S$ 
satisfying $\MO_X(-K_X)|_S \simeq \MO_S(dM)$$)$. 
\end{thm}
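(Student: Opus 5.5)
The plan is to apply the weak Brill--Noether inequality (Proposition \ref{p weak BN generality}) to a hypothetical divisor $M$ with $H := -K_X|_S \sim dM$, compute both sides by Riemann--Roch, and show the inequality fails. So suppose, for a contradiction, that $H \sim dM$ for some integer $d \geq 2$ and some Cartier divisor $M$ on $S$.

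First I record the basic numerics. Since $H$ is ample and $d>0$, the divisor $M$ is ample, and so is $(d-1)M = H - M$. As $S$ is a smooth K3 surface, $M^2$ is even; write $M^2 = 2m$. Ampleness gives $m \geq 1$. Then $2g-2 = H^2 = d^2 M^2 = 2d^2 m$, so $g+1 = d^2 m + 2$.

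Next I compute $h^0$ of the two relevant divisors. On a K3 surface, a nef and big divisor $L$ satisfies $H^1(S, L) = 0$ in every characteristic. This is the standard fact that, for a nef and big $L$, $h^1(L)\neq 0$ forces $L$ to be of the form $kE$ with $E$ an elliptic pencil (via Saint-Donat type arguments, valid in characteristic $p$). I expect this to be the only step that needs an external input; if that citation is unavailable, I would instead use the inequality $h^0(L) \geq \chi(L)$, which suffices here. Indeed, $H^2(S, L) = H^0(S, -L)^\vee = 0$ for $L$ ample, so Riemann--Roch gives
\[
h^0(M) \geq \tfrac12 M^2 + 2 = m+2,
\]
\[
h^0(H-M) = h^0((d-1)M) \geq (d-1)^2 m + 2 .
\]

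Finally I compare with Proposition \ref{p weak BN generality}, which gives $h^0(M)\,h^0(H-M) \leq g+1 = d^2 m + 2$. Hence
\[
(m+2)\bigl((d-1)^2 m + 2\bigr) \leq d^2 m + 2 .
\]
Expanding the left-hand side, this becomes
\[
(d-1)^2 m^2 + \bigl(2(d-1)^2 + 2\bigr) m + 4 \leq d^2 m + 2 .
\]
Since $2(d-1)^2 + 2 - d^2 = (d-2)^2 \geq 0$, the left-hand side is at least $(d-1)^2 m^2 + d^2 m + 4$, which is strictly larger than $d^2 m + 2$. This is a contradiction. Therefore no such pair $(d, M)$ exists, and $\MO_X(-K_X)|_S$ is primitive in $\Pic S$.
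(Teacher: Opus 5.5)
Your proof is correct and follows the same route as the paper: assume $H\sim dM$, bound $h^0(M)$ and $h^0((d-1)M)$ from below by Riemann--Roch, and contradict the weak Brill--Noether inequality $h^0(M)\,h^0(H-M)\le g+1$ of Proposition \ref{p weak BN generality}. The difference is minor but convenient. The paper uses $3\le g\le 12$, $g\neq 11$ to reduce to the three cases $(g,d,M^2)=(5,2,2),(9,2,4),(10,3,2)$ and checks each one, whereas your uniform estimate via $2(d-1)^2+2-d^2=(d-2)^2\ge 0$, together with the bound $h^0\ge\chi$ instead of an equality, needs neither the genus classification nor any $H^1$-vanishing.
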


\begin{proof}
Suppose that there exist an integer $d \geq 2$ and a Cartier divisor $M$ on $S$ such that $\MO_X(-K_X)|_S \simeq \MO_S(dM)$. 
Since $M^2 \in 2\Z$ by the Riemann-Roch theorem $\chi(S, M) = \chi(S, \MO_S) + \frac{1}{2}M^2$,  it holds that  
\[
2g -2  = (-K_X)^3 = (-K_X|_S)^2 = (dM)^2 = d^2M^2 \in 2d^2 \Z. 
\]
By $3 \leq g \leq 12$ and $g \neq 11$, 
we have the following three solutions: 
\begin{enumerate}
\item $(g, d, M^2) = (5, 2, 2)$. 
\item $(g, d, M^2) = (9, 2, 4)$. 
\item $(g, d, M^2) = (10, 3, 2)$. 
\end{enumerate}
For $n >0$, we have $h^0(nM) = \chi(nM) = \chi(\MO_S) + \frac{1}{2}(nM)^2  = \frac{n^2}{2}M^2 +2$. 
Then  the weak Brill-Noether generality $g + 1 \geq h^0(M) h^0( (d-1)M)$ (Proposition \ref{p weak BN generality}) leads to the following contradiction in any case:  
\begin{enumerate}
\item 
$6= g+1 \geq  h^0(M) h^0(M) = ( \frac{1}{2} M^2 + 2)^2 
= 9$. 
\item 
$10= g+1 \geq  h^0(M) h^0(M) = 
( \frac{1}{2} M^2 + 2)^2 = 4^2 =16$. 
\item 
$11  = g+1 \geq  h^0(M) h^0(2M) = 
( \frac{1}{2} M^2 + 2)( 2M^2 + 2) 
= 3 \cdot 6 =18$. 
\end{enumerate}
\end{proof}

\subsection{Galois semisimplicity of Lazarsfeld bundles}

\begin{prop}\label{p generic Lefschetz}
Let $X$ be a prime Fano threefold 
such that $|-K_X|$ is very ample. 
Let $S_1$ and $S_2$ be 
smooth members of $|-K_X|$ 
such that $C := S_1 \cap S_2$ is a smooth curve. 
Let $T_0$ be the generic member of the pencil $\Lambda$ generated by $S_1$ and $S_2$, 
which is a smooth prime divisor on $X_{\kappa_0} := X \times_k \kappa_0$ for $\kappa_0 := K(\P^1_k) = k(t)$. 
For the purely inseparable closure $\kappa$ of $\kappa_0$, 
we set $T := T_0 \times_{\kappa_0} \kappa$. 
Then $T$ is a geometrically integral smooth prime divisor on $X_{\kappa} := X \times_k \kappa$ such that the pullback group homomorphism 
\[
\varphi : \Pic X \to \Pic T
\]
is an isomorphism. 
\end{prop}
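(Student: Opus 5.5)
We first establish the geometric properties of $T$, then compute $\Pic T$ in two stages: over $\kappa_0$ via the Lefschetz theorem for generic members, and then over the purely inseparable closure $\kappa$.

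\emph{Geometric properties of $T$.} Consider the total space of the pencil $\Lambda$, namely $\mathcal T := \{(x, t) \in X \times \P^1 \mid x \in S_t\}$. It is the blowup of $X$ along the base locus $C = S_1 \cap S_2$. Since $C$ is a smooth curve, $\mathcal T$ is smooth. Moreover, $T_0$ is its generic fibre over $\P^1_k$, so $T_0$ is regular.

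To see that $T_0$ is smooth and geometrically integral over $\kappa_0$, we argue as follows. The member $S_1$ is smooth, so the morphism $\mathcal T \to \P^1$ is smooth over a nonempty open subset of $\P^1$. The generic fibre is therefore smooth over $\kappa_0$, hence geometrically reduced. It is also geometrically connected: $H^0(T_0, \MO_{T_0}) = \kappa_0$ follows from $H^1(X, \MO_X(K_X)) = 0$ (Kodaira-type vanishing for Fano threefolds in the form already used in \cite{FanoI}, or directly from $H^1(X,\omega_X)=H^2(X,\MO_X)^\vee = 0$) and $H^0(X, \MO_X) = k$, via the sequence $0 \to \MO_{X}(K_X) \to \MO_X \to \MO_{S_t} \to 0$ on $X_{\kappa_0}$. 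Smoothness and geometric integrality are stable under base change, so $T = T_0 \times_{\kappa_0} \kappa$ is a geometrically integral smooth prime divisor on $X_\kappa$.

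\emph{Picard group over $\kappa_0$.} By the Lefschetz theorem for the generic member \cite{Tan24}, the restriction $\Pic X_{\kappa_0} = \Pic X \to \Pic T_0$ is an isomorphism, so $\Pic T_0 = \Z\, (K_X|_{T_0})$. Here $\Pic X_{\kappa_0} = \Pic X$ holds because $X$ is rational-connected-type with $H^1(\MO_X)=0$; concretely, $\Pic(X \times \mathbb A^1) = \Pic X$ and we localise.

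\emph{Passing to $\kappa$.} This is the main obstacle. We have $\kappa = \bigcup_e \kappa_0^{1/p^e}$, so $\Pic T = \varinjlim_e \Pic T_{e}$ with $T_e := T_0 \times_{\kappa_0} \kappa_0^{1/p^e}$. Each extension $\kappa_0^{1/p^e}/\kappa_0$ is purely inseparable, and moreover $\kappa_0^{1/p^e} = k(t^{1/p^e}) \simeq k(s)$. Via this isomorphism, $T_e$ is identified with the generic member of the pencil $\Lambda$ pulled back along the Frobenius-type cover $\P^1 \to \P^1$, $s \mapsto s^{p^e}$; that is, $T_e$ is the generic fibre of $\mathcal T \times_{\P^1} \P^1$.

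The pullback of line bundles along a purely inseparable extension is injective with $p$-power torsion cokernel. Indeed, the norm gives $N \circ \text{pullback} = p^e$, and $\Pic T_0 \to \Pic T_e$ is injective because $H^0(\MO^*)$ is unchanged, so any kernel would consist of torsion line bundles trivialised by the norm. Hence $\Pic T_e$ has rank one and contains $\Z\,(K_X|_{T_e})$ with index a power of $p$.

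Primitivity then rules out nontrivial index. If $-K_X|_{T_e} = dL$ with $d \geq 2$, base change to $\ol{\kappa}$ would give a smooth K3 surface $T_{\ol\kappa}$, a smooth member of $|-K_{X_{\ol\kappa}}|$, with $-K$ divisible by $d$. This contradicts Theorem \ref{t primitive -K_X|_S}, applied over the algebraically closed field $\ol\kappa$; its proof is field-independent.

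It remains to handle torsion. $\Pic T_e$ is torsion free since $\Pic T_{\ol\kappa}$ is torsion free for a K3 surface and $\Pic T_e \to \Pic T_{\ol\kappa}$ is injective (as $H^0(\MO_{T_e}) = \kappa_0^{1/p^e}$ and $T_e$ is geometrically integral, by the Hochschild–Serre/descent argument). Therefore $\Pic T_e = \Z\,(K_X|_{T_e})$ for every $e$. Passing to the direct limit, the map $\varphi$ is an isomorphism.
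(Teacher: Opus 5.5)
Your proposal follows the same route as the paper:
\begin{itemize}
\item the Lefschetz theorem for the generic member over $\kappa_0$ via \cite{Tan24};
\item invariance of the Picard rank under the purely inseparable extension $\kappa/\kappa_0$;
\item torsion-freeness of $\Pic T$ from $\Pic T\hookrightarrow\Pic T_{\ol{\kappa}}$, with $T_{\ol{\kappa}}$ a K3 surface;
\item primitivity from Theorem \ref{t primitive -K_X|_S} applied over $\ol{\kappa}$.
\end{itemize}
For geometric integrality the paper argues differently: $T_{\ol{\kappa}}\sim -K_{X_{\ol{\kappa}}}$ must be prime because $\Pic X_{\ol{\kappa}}=\Z K_{X_{\ol{\kappa}}}$. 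Your argument via $H^0(T_0,\MO_{T_0})=\kappa_0$ also works.

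However, the step you yourself call the main obstacle is not justified by what you wrote. For $f\colon T_e\to T_0$, the identity $N\circ f^*=p^e$ only shows that $\ker f^*$ is $p^e$-torsion. That bounds $\rho(T_e)$ from below. The bound $\rho(T_e)\le 1$ needs the cokernel of $f^*$ to be torsion, and nothing in your argument addresses that.

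The claim is true and easy to repair. Set $\kappa_e:=\kappa_0^{1/p^e}$, so $\kappa_e^{p^e}\subset\kappa_0$. Then the $e$-th absolute Frobenius of $T_e$ factors as $F^e_{T_e}=h\circ f$, where $h\colon T_0\to T_e$ is given locally by $a\otimes b\mapsto a^{p^e}b^{p^e}$. Hence $M^{\otimes p^e}\simeq (F^e_{T_e})^*M=f^*(h^*M)$ for every $M\in\Pic T_e$, so the cokernel is killed by $p^e$. The paper avoids this by citing \cite[Proposition 2.4]{Tan18b} for $\rho(T)=\rho(T_0)$.

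A smaller omission concerns the Lefschetz step. The result \cite[Proposition 5.17]{Tan24} is about the generic fibre of the blown-up pencil $\pi\colon Y=\mathrm{Bl}_C X\to\P^1_k$, and gives surjectivity of $\Pic Y\to\Pic T_0$. To descend this to $\Pic X$ you need, as in the paper, two facts:
\begin{itemize}
\item $\Pic Y=\sigma^*\Pic X\oplus\Z\,\pi^*\MO_{\P^1}(1)$;
\item $\pi^*\MO_{\P^1}(1)$ restricts trivially to the generic fibre.
\end{itemize}
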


\begin{proof}
For the algebraic closure $\ol{\kappa}$ of $\kappa$ and 
the base change $T_{\ol{\kappa}} := T \times_\kappa \ol{\kappa}$, 
the relation $T \sim -K_{X_\kappa}$ implies that 
$T_{\ol{\kappa}} \sim -K_{X_{\ol{\kappa}}}$. 
By $\Pic X_{\ol{\kappa}} = \Z K_{X_{\ol{\kappa}}}$, 
$T_{\ol{\kappa}}$ is a prime divisor. 
Hence $T$ is a geometrically integral smooth prime divisor.


By $\Pic X \simeq \Z$, it is enough to show that 
the pullback map $\varphi : \Pic X \to \Pic T$ is surjective. 
Let $\sigma : Y \to X$ be the blowup along $C$. 
For the induced pencil $\pi: Y \to \P^1_k$ and its generic fibre $T_Y$, 
we get $T_Y \xrightarrow{\simeq} T_0$. 
It follows from \cite[Proposition 5.17]{Tan24} that the pullback map 
\[
\varphi_Y : \Pic Y \to \Pic T_Y
\]
is surjective. 
By $-K_Y \sim -\sigma^*K_X-E \sim \pi^*\MO_{\P^1}(1)$, 
we have 
\[
\Pic Y = \sigma^*\Pic X \oplus \Z E = 
  \sigma^*\Pic X \oplus \Z (\pi^*\MO_{\P^1}(1)). 
\]
As the pullback $\varphi_Y(\pi^*\MO_{\P^1}(1))$
of $\pi^*\MO_{\P^1}(1)$ on $T_Y (\simeq T_0)$ is zero, 
the composition 
\[
\varphi_0 : \Pic X \xrightarrow{\sigma^*} \Pic Y \xrightarrow{\varphi_Y} \Pic T_Y \simeq \Pic T_0
\]
is surjective. 
Since $\rho(T) = \rho(T_0)=1$ \cite[Proposition 2.4]{Tan18b} and $T_{\ol{\kappa}}$ is a K3 surface, we get $\Pic T \simeq \Z$. 
Therefore, it is enough to show that the pullback $K_X|_T$ is primitive, which follows from the fact that 
$K_X|_{T_{\ol{\kappa}}}$ is primitive 
(Theorem \ref{t primitive -K_X|_S}). 
\end{proof}

\begin{prop}\label{p semisimple1}
Let $X$ be a regular projective variety 
over a perfect field $\kappa$. 
Let $F$ be a vector bundle such that $\Hom(F, F)$ is a field $($and hence commutative$)$. 
Set $n := \dim_{\kappa} \Hom(F, F)$. 
Then the following hold$:$ 
\begin{enumerate}
\item 
$F_{\ol{\kappa}} := F \otimes_\kappa \ol{\kappa} = F_1 \oplus \cdots \oplus F_n$ 
for some vector bundles $F_1, ..., F_n$ on $X_{\ol{\kappa}} := X \times_{\kappa} \ol{\kappa}$ 
satisfying $\dim_{\ol{\kappa}} (F_i, F_j) =\delta_{ij}$. 
In particular, each $F_i$ is simple. 
\item 
$F_1, ..., F_n$ are $\Gal(\ol{\kappa}/\kappa)$-conjugate 
$($i.e., if $i\neq j$, then 
we have $\sigma_{ij}(F_i) = F_j$ 
for some $\sigma_{ij} \in \Gal(\ol{\kappa}/\kappa)$$)$. 
\item  $\rank F_1 = \cdots = \rank F_n$. 
\end{enumerate}
\end{prop}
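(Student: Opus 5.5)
Set $K := \Hom(F, F)$, a field with $[K:\kappa] = n$. Since $\kappa$ is perfect, $K/\kappa$ is finite separable. Flat base change for $\Hom$ of coherent sheaves on the projective scheme $X$ gives the first isomorphism in
\[
\Hom(F_{\ol{\kappa}}, F_{\ol{\kappa}}) \simeq K \otimes_\kappa \ol{\kappa} \simeq \prod_{\tau : K \hookrightarrow \ol{\kappa}} \ol{\kappa},
\]
and the second comes from separability; here $\tau$ runs over the $n$ $\kappa$-embeddings. Let $e_\tau$ be the corresponding primitive idempotents. By Lemma \ref{l Hom decompo}, applied inductively, we get $F_{\ol{\kappa}} = \bigoplus_\tau F_\tau$ with $F_\tau := e_\tau(F_{\ol{\kappa}})$, $\Hom(F_\tau, F_{\tau'}) = 0$ for $\tau \neq \tau'$, and $\Hom(F_\tau, F_\tau) \simeq \ol{\kappa}$. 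This gives $\dim \Hom(F_i, F_j) = \delta_{ij}$. Each $F_\tau$ is a direct summand of a vector bundle, hence locally free. It is also nonzero, because $e_\tau \neq 0$ and the isomorphism $\Hom(F_\tau,F_\tau)\simeq \ol{\kappa}$ of Lemma \ref{l Hom decompo} forces $F_\tau\neq 0$. This proves (1).

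For (2), let $G := \Gal(\ol{\kappa}/\kappa)$ act on $X_{\ol{\kappa}}$ through the second factor. For $\sigma \in G$ there is a canonical isomorphism $\sigma^* F_{\ol{\kappa}} \simeq F_{\ol{\kappa}}$, since $F_{\ol{\kappa}}$ is defined over $\kappa$. Conjugation by it induces on $\Hom(F_{\ol{\kappa}}, F_{\ol{\kappa}}) = K \otimes_\kappa \ol{\kappa}$ the semilinear automorphism $1 \otimes \sigma$. Under the product decomposition, $1\otimes\sigma$ sends the factor indexed by $\tau$ to the factor indexed by $\sigma\circ\tau$, so it maps $e_\tau$ to $e_{\sigma\tau}$. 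Consequently $\sigma^*$ carries the summand $F_\tau$ onto $F_{\sigma\tau}$ (or onto $F_{\sigma^{-1}\tau}$, depending on the direction conventions; only transitivity matters). Since $G$ acts transitively on the $\kappa$-embeddings $K \hookrightarrow \ol{\kappa}$, for all $i,j$ there is $\sigma_{ij}$ with $\sigma_{ij}(F_i) = F_j$. (3) then follows at once: pulling back along the automorphism $\sigma$ of the scheme $X_{\ol{\kappa}}$ preserves rank, because $\rank$ is computed at the generic point and $\sigma$ fixes it.

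The main obstacle is the bookkeeping in (2). One must make precise that the idempotent $e_\tau \in K\otimes_\kappa\ol{\kappa}$ is sent to $e_{\sigma\tau}$ by $1\otimes\sigma$, and that the descent isomorphism intertwines this action with the pullback of subsheaves. I would check it concretely: write $K = \kappa(\alpha)$ with minimal polynomial $f$, so that $K\otimes\ol{\kappa} = \ol{\kappa}[x]/(f) \simeq \prod_{f(\beta)=0}\ol{\kappa}$, where $e_\beta$ is the Lagrange idempotent at $\beta$. Applying $\sigma$ to the coefficients of $e_\beta$ gives $e_{\sigma(\beta)}$. The Galois action on $\Gamma(U_{\ol{\kappa}}, F_{\ol{\kappa}}) = \Gamma(U,F)\otimes\ol{\kappa}$, taken over affine opens $U$ of $X$, is $1\otimes\sigma$, and it satisfies $\sigma(\phi(s)) = \sigma(\phi)(\sigma(s))$. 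Hence it maps $e_\beta(F_{\ol{\kappa}})$ onto $e_{\sigma\beta}(F_{\ol{\kappa}})$.
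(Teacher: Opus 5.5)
Your proposal is correct and follows essentially the same route as the paper. Both arguments base-change $\Hom(F,F)$ and use separability to split $\Hom(F,F)\otimes_\kappa\ol{\kappa}$ into $n$ copies of $\ol{\kappa}$, then obtain the summands $F_i$ from the idempotents via Lemma~\ref{l Hom decompo}, and get (2) and (3) from the transitive Galois permutation of those idempotents under $\varphi\mapsto\sigma\circ\varphi\circ\sigma^{-1}$. You spell out a few points the paper leaves implicit (the explicit idempotents, transitivity on the embeddings $K\hookrightarrow\ol{\kappa}$, and why each $F_\tau$ is nonzero and locally free), but the argument is the same.
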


\begin{proof}
For $\kappa' := \Hom(F, F)$, we get $\kappa$-algebra isomorphisms 
\begin{equation}\label{e1 semisimple1}    
\ol{\kappa}_1 \times \cdots \times \ol{\kappa}_n 
\simeq \kappa' \otimes_{\kappa} \ol{\kappa} = 
\Hom(F, F) \otimes_{\kappa} \ol{\kappa} \simeq 
\Hom(F_{\ol{\kappa}}, F_{\ol{\kappa}})
\end{equation}
for $F_{\ol{\kappa}} := F \otimes_{\kappa} \ol{\kappa}$ 
and $\ol{\kappa}_i := \ol{\kappa}$. 
By Proposition \ref{p Artin Wedderburn}, we can find 
locally free subsheaves $F_1, ..., F_n$ of $F_{\ol{\kappa}}$ 
satisfying $\dim_{\ol{\kappa}}\Hom(F_i, F_j) =\delta_{ij}$. 
Thus (1) holds.


Let us show (2). 
By symmetry, we treat the case when $i=1$ and $j=2$. 
We can directly check that 
the isomorphisms in (\ref{e1 semisimple1}) are  
$\Gal(\ol{\kappa}/\kappa)$-equivariant with respect to the following 
$\Gal(\ol{\kappa}/\kappa)$-actions: 
\begin{enumerate}
\renewcommand{\labelenumi}{(\roman{enumi})}
\item The 
$\Gal(\ol{\kappa}/\kappa)$-action on 
$\kappa' \otimes_{\kappa} \ol{\kappa} = 
\Hom(F, F) \otimes_{\kappa} \ol{\kappa}$ is induced by the natural one on $\ol{\kappa}$. 
Then the 
$\Gal(\ol{\kappa}/\kappa)$-action on 
$\ol{\kappa}_1 \times \cdots \times \ol{\kappa}_n$ is defined 
so that the ring isomorphism $\ol{\kappa}_1 \times \cdots \times \ol{\kappa}_n 
\simeq \kappa' \otimes_{\kappa} \ol{\kappa}$ 
is $\Gal(\ol{\kappa}/\kappa)$-equivariant. 
\item 
The  $\Gal(\ol{\kappa}/\kappa)$-action on 
$\Hom(F_{\ol{\kappa}}, F_{\ol{\kappa}})$ 
is given by $\varphi \mapsto \sigma \cdot \varphi := \sigma \circ \varphi \circ \sigma^{-1}$  
for $\sigma \in \Gal(\ol{\kappa}/\kappa)$ and 
$\varphi \in \Hom(F_{\ol{\kappa}}, F_{\ol{\kappa}})$. 
\end{enumerate}
By (i), 
the Galois group $\Gal(\ol{\kappa}/\kappa)$ permutes the direct product factors $\ol{\kappa}_1, ..., \ol{\kappa}_n$ transitively. 
In particular, there exists $\sigma_{12} \in \Gal(\ol{\kappa}/\kappa)$ (i.e., a 
$\kappa$-linear automorphism 
$\sigma_{12} : \ol{\kappa} \to \ol{\kappa}$) 
such that $\sigma_{12}(\ol{\kappa}_1) = \ol{\kappa}_2$. 
For $\pi_1 := (1, 0, ..., 0), \pi_2 := (0, 1, 0, ..., 0) \in \ol{\kappa}_1 \times \cdots \times \ol{\kappa}_n$, 
we get $\sigma_{12} \cdot \pi_1 = \pi_2$. 
By $F_1=\pi_1(F_{\ol{\kappa}})$ and $F_2 = \pi_2(F_{\ol{\kappa}})$ (cf.\ the proof of Lemma \ref{l Hom decompo}), 
it holds that   
\[
F_2 = \pi_2(F_{\ol{\kappa}}) = (\sigma_{12} \cdot \pi_1) (F_{\ol{\kappa}}) \overset{{\rm (ii)}}{=} 
(\sigma_{12} \circ \pi_1 \circ \sigma_{12}^{-1})(F_{\ol{\kappa}}) = 
\sigma_{12}(\pi_1(F_{\ol{\kappa}})) = \sigma_{12}(F_1). 
\]
Thus (2) holds. 
The assertion (3) follows from (2). 
\end{proof}

\begin{cor}\label{c semisimple1.5}
Let $S$ be a smooth K3 surface over a perfect $C_1$-field $\kappa$ such that $\Pic S \simeq \Z$. 
Let $F$ be a $\mu$-stable vector bundle. 
Set $n:= \dim_\kappa\Hom(F, F)$. 
Let $\ol{\kappa}$ be the the algebraic closure of $\kappa$.  
Then the following hold: 
\begin{enumerate}
\item 
There exist locally free subsheaves $F_1, ..., F_n$ 
of $F_{\ol{\kappa}}:= F \otimes_\kappa \ol{\kappa}$ on $S_{\ol{\kappa}} := S \times_\kappa \ol{\kappa}$ such that 
\[
F_{\ol{\kappa}}= F_1 \oplus \cdots \oplus F_n
\]
and $\dim_{\ol{\kappa}}\Hom(F_i, F_j) = \delta_{ij}$. 
In particular, each $F_i$ is simple.
\item $F_1, ..., F_n$ are $\Gal(\ol{\kappa}/\kappa)$-conjugate. 
\item For $H:=-c_1(F)$ and $H_i := -c_1(F_i)$, it holds that 
\begin{itemize}
\item $H_1^2 = \cdots = H_n^2$, 
\item $H_1 \cdot H = \cdots = H_n \cdot H$, 
\item $\rank F_1 = \cdots = \rank F_n$, and 
\item $s(F_1) = \cdots = s(F_n)$. 
\end{itemize}
\end{enumerate}
\end{cor}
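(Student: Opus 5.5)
The plan is to reduce everything to Proposition \ref{p semisimple1} and then read off numerical invariants from Galois conjugacy. Since $F$ is $\mu$-stable, every nonzero endomorphism $\varphi : F \to F$ is an isomorphism. Indeed, if $\varphi\neq 0$, then $\Ker(\varphi)$ and $\Im(\varphi)$ would otherwise destabilise $F$: one would have $\mu(\Im\varphi) > \mu(F)$ or $\mu(\Ker\varphi)\ge\mu(F)$, unless $\Ker(\varphi)=0$. An injective endomorphism of a vector bundle with the same $c_1$ is then an isomorphism by comparing determinants, since the map $\det F \to \det F$ is a nonzero section of $\MO_S$ and hence a constant. So alternative (1) of Corollary \ref{c Artin Wedderburn} fails. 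Because $\kappa$ is a $C_1$-field, alternative (2) must then hold: $\Hom(F,F)\simeq\kappa'$ is a field, finite over $\kappa$. As $\kappa$ is perfect and $S$ is regular and projective, Proposition \ref{p semisimple1} applies. It yields the decomposition $F_{\ol{\kappa}}=F_1\oplus\cdots\oplus F_n$ with $\dim\Hom(F_i,F_j)=\delta_{ij}$ and $n=\dim_\kappa\Hom(F,F)$, which proves (1). It also shows that the $F_i$ are $\Gal(\ol{\kappa}/\kappa)$-conjugate, which proves (2).

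For (3), I use that for $\sigma\in\Gal(\ol{\kappa}/\kappa)$ the twist $\sigma(F_i)=F_j$ is the pullback of $F_i$ along the automorphism $\sigma_S:=\mathrm{id}_S\times\Spec\sigma$ of the scheme $S_{\ol{\kappa}}$. This automorphism is not $\ol{\kappa}$-linear, but it is $\kappa$-linear. Chern classes commute with pullback, so $c_1(F_j)=\sigma_S^*c_1(F_i)$ and $c_2(F_j)=\sigma_S^*c_2(F_i)$. Intersection numbers on $S_{\ol{\kappa}}$ are preserved by $\sigma_S^*$, because degrees of zero-cycles are computed via Euler characteristics, and $\sigma_S$ induces $\sigma$-semilinear isomorphisms on cohomology and so preserves $\ol{\kappa}$-dimensions. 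Moreover $H_{\ol{\kappa}}$ is defined over $\kappa$, so $\sigma_S^*H=H$. Hence $H_j^2=H_i^2$ and $H_j\cdot H=\sigma_S^*H_i\cdot\sigma_S^*H=H_i\cdot H$. Rank is preserved trivially. For the last invariant, $s(F_i)=\chi(F_i)-\rank F_i$ by Riemann--Roch on the K3 surface $S_{\ol{\kappa}}$, and $\chi(F_j)=\chi(\sigma_S^*F_i)=\chi(F_i)$ by the same semilinear-cohomology argument. This gives $s(F_1)=\cdots=s(F_n)$.

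The main obstacle I expect is the first step, namely justifying that the endomorphism ring has no zero divisors. One must check that slope stability over the non-closed field $\kappa$ really forces every nonzero endomorphism to be an isomorphism. I would handle this with the standard argument: $\Im\varphi\subset F$ is a nonzero torsion-free quotient of $F$, and $\mu$-stability forces $\rank\Im\varphi=\rank F$. The determinant comparison then shows $\varphi$ is an isomorphism, with $\Pic S\simeq\Z$ used only to make slopes well defined. Everything after that is formal bookkeeping from Proposition \ref{p semisimple1}.
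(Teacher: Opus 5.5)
Your proposal is correct and follows essentially the same route as the paper. Stability together with the $C_1$ hypothesis makes $\Hom(F,F)$ a field; you route this through Corollary \ref{c Artin Wedderburn}, which packages the same Artin--Wedderburn and Brauer-group argument the paper invokes directly. Proposition \ref{p semisimple1} then gives (1) and (2), and (3) is read off from Galois conjugacy; your check that $\sigma_S^*$ preserves intersection numbers and Euler characteristics just spells out what the paper calls ``easy to see''.
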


\begin{proof}
As $F$ is $\mu$-stable and $\kappa$ is a $C_1$-field, $\Hom(F, F)$ is a commutative division ring, i.e., a field. 
By Proposition \ref{p semisimple1}, 
(1) and (2) hold. 
It is easy to see that (2) implies (3). 
\end{proof}



\begin{cor}\label{c semisimple2}
Let $X$ be a prime Fano threefold 
such that $|-K_X|$ is very ample. 
Let $S_1$ and $S_2$ be 
smooth members of $|-K_X|$ 
such that $C := S_1 \cap S_2$ is a smooth curve. 
Assume that there is a Cartier divisor $A$ on $C$ such that 
$|A|$ and $|K_C-A|$ are base point free. 
Then, after taking the base change to some larger algebraically closed field, 
there exist a smooth member $S$ of $|-K_X|$ containing $C$ 
and vector subbundles 
$F_1, ..., F_n$ of 
the vector bundle $F := F_{S, C, A}$ on $S$ 
such that the following hold for $n := \dim_k \Hom(F, F)$$:$
\begin{enumerate}
\item 
$F  = F_1 \oplus \cdots \oplus F_n$. 
\item 
$\dim_k\Hom(F_i, F_j) = \delta_{ij}$. In particular, 
each $F_i$ is simple. 
\item For $H_i := -c_1(F_i)$, it holds that 
\begin{itemize}
\item $H_1^2 = \cdots = H_n^2$, 
\item $H_1 \cdot C = \cdots = H_n \cdot C$, 
\item $\rank F_1 = \cdots = \rank F_n$, and 
\item $s(F_1) = \cdots = s(F_n)$. 
\end{itemize}
\end{enumerate}
\end{cor}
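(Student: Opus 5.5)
The plan is to work first with the generic member of the pencil, where stability is available, and then specialise to a smooth member over a large algebraically closed field.

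\emph{Step 1: the generic member.} Let $\Lambda$ be the pencil generated by $S_1$ and $S_2$, let $T_0$ be its generic member over $\kappa_0 = k(t)$, and set $T := T_0 \times_{\kappa_0}\kappa$ with $\kappa$ the perfect closure of $\kappa_0$. By Proposition \ref{p generic Lefschetz}, $T$ is a geometrically integral smooth K3 surface with $\Pic T = \Z H$, where $H := -K_X|_T$. The base locus of $\Lambda$ is $C$, so $C_\kappa \subset T$. Since $A$ is base point free on $C$, $A_\kappa$ is globally generated on $C_\kappa$, and we may form $F_T := F_{T, C_\kappa, A_\kappa}$.

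\emph{Step 2: stability of $F_T$.} Apply Proposition \ref{p ele tf stable} to $T$. Its hypotheses hold: $\Pic T = \Z C_\kappa$ because $C_\kappa \sim H$, and $|H|$ is base point free. Hence $F_T$ is $\mu$-stable. The field $\kappa = k(t^{1/p^\infty})$ is a perfect $C_1$-field, being an algebraic extension of the function field of a curve over $k$. Corollary \ref{c semisimple1.5} therefore gives a decomposition $F_{T,\ol{\kappa}} = F_1\oplus\cdots\oplus F_n$ with $n = \dim_\kappa \Hom(F_T,F_T)$, $\dim \Hom(F_i,F_j)=\delta_{ij}$, and equality of the numerical invariants $H_i^2$, $H_i\cdot H$, $\rank F_i$, $s(F_i)$.

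\emph{Step 3: specialising to a closed member.} The surface $T_{\ol{\kappa}}$ is a smooth member of $|-K_{X_{\ol{\kappa}}}|$ containing $C_{\ol{\kappa}}$. Elementary transforms commute with flat base change, because $H^0(C,A)\otimes\ol{\kappa} = H^0(C_{\ol{\kappa}},A_{\ol{\kappa}})$. Thus $F_{T,\ol{\kappa}} = F_{T_{\ol{\kappa}}, C_{\ol{\kappa}}, A_{\ol{\kappa}}}$, and we have $\dim_{\ol{\kappa}}\Hom(F_{\ol{\kappa}},F_{\ol{\kappa}}) = \dim_\kappa\Hom(F_T,F_T) = n$ by flat base change of $\Hom$. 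Replacing $k$ by the larger algebraically closed field $\ol{\kappa}$ and setting $S := T_{\ol{\kappa}}$, conditions (1) and (2) follow. For (3), $H_i\cdot C = H_i\cdot H$ because $C \sim H$ on $S$, and the other equalities in (3) are exactly those of Corollary \ref{c semisimple1.5}(3).

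\emph{Expected obstacle.} The main issue is bookkeeping. One must check that the base change $X_{\ol{\kappa}}$ is still a prime Fano threefold with $|-K|$ very ample, so that the statement makes sense there, and that the $F_i$ are vector subbundles and not merely locally free subsheaves. The latter holds because they are direct summands of a vector bundle, as in Corollary \ref{c semisimple1.5}(1). The hypothesis that $|K_C-A|$ is base point free is not used here; it should enter only later, when these data are combined with the weak Brill-Noether bound.
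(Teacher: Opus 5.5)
Your proposal is correct and follows essentially the same route as the paper. You pass to the generic member $T$ of the pencil over the perfect closure $\kappa=k(t^{1/p^{\infty}})$, get $\Pic T=\mathbb{Z}(-K_X|_T)$ from Proposition \ref{p generic Lefschetz} and $\mu$-stability of $F_{T,C_\kappa,A_\kappa}$ from Proposition \ref{p ele tf stable}, and then apply Corollary \ref{c semisimple1.5} with $S:=T\times_\kappa\ol{\kappa}$. Your extra checks are the details the paper leaves implicit: compatibility of the elementary transform and of $\dim\Hom$ with flat base change, and $H_i\cdot C=H_i\cdot H$ since $c_1(F)=-C$.
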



\begin{proof}
Let $\Lambda$ be the pencil generated by $S_1$ and $S_2$. 
Then the generic member $T_0$ of $\Lambda$ is a smooth prime divisor on $X_{\kappa_0}$ for $\kappa_0 := k(t)$. 
For the purely inseparable closure $\kappa := k(t^{1/p^{\infty}})$ of $\kappa_0 = k(t)$, 
take the base change $T := T_0 \times_{\kappa_0} \kappa$, 
which is a smooth prime divisor on $X_{\kappa} := X \times_k \kappa$. 
We have $\Pic T = \Z H_T$ for $H_T := -K_X|_T$ 
(Proposition \ref{p generic Lefschetz}). 
Then $F_T := F_{T, C_{\kappa}, A_{\kappa}}$ is a $\mu$-stable vector bundle (Proposition \ref{p ele tf stable}). 
For the base change $S := T \times_{\kappa} \ol{\kappa}$ to the algebraic closure $\ol{\kappa}$ of $\kappa$, 
the assertion holds by Corollary \ref{c semisimple1.5}. 
\end{proof}

\subsection{Proof of Brill-Noether generality}


\begin{nota}\label{n main setting}
Let $X$ be a prime Fano threefold 
such that $|-K_X|$ is very ample. 
Assume that there exist 
smooth members  $S$ and $S'$ of $|-K_X|$ such that the following hold: 
\begin{enumerate}
\item $C := S \cap S'$ is a smooth curve. 
\item There is a Cartier divisor $A$ on $C$ such that 
$\deg A \leq g-1$ 
and each of $|A|$ and $|K_C -A|$ is base point free. 
\item 
The vector bundle $F := F_{S, C, A}$ on $S$ satisfies 
\[
F = F_1 \oplus \cdots \oplus F_n
\]
for $n := \dim_k \Hom(F, F)$ 
and simple vector subbundles $F_1, ..., F_n$ of $F$. 
\item For $H_i := -c_1(F_i)$, it holds that 
\begin{itemize}
\item $H_1^2 = \cdots = H_n^2$, 
\item $H_1 \cdot C = \cdots = H_n \cdot C$, 
\item $r := \rank F_1 = \cdots = \rank F_n$, and 
\item $s:=s(F_1) = \cdots = s(F_n)$. 
\end{itemize}
\end{enumerate}
In particular, 
$H := H_1 + \cdots + H_n \sim -K_X|_S$. 
\end{nota}

\begin{prop}\label{p BN H_i bpf}
We use Notation \ref{n main setting}. 
Then the following hold: 
\begin{enumerate}
\item 
$h^0(A) = nr, h^1(A)=ns$, and $r\leq s$. 
\item $rs \leq \frac{1}{2} H_i^2 +1$. 
\item 
For each $i$, $|H_i|$ is base point free and 
$h^0(S, H_i) \geq 2$. 
\item $h^0(F) = h^1(F) = 0$ and $h^2(F) = \chi(F) = h^0(A) + h^1(A)$. 
\end{enumerate}    
\end{prop}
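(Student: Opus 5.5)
The plan is to compute the numerical invariants of $F=F_{S,C,A}$ directly from the defining sequence
\[
0 \to F \to H^0(C,A)\otimes_k \MO_S \to \MO_C(A) \to 0 ,
\]
then distribute them among the summands using Notation \ref{n main setting}(3)(4), and finally apply Riemann--Roch and Mukai's inequality to each simple summand $F_i$.

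\emph{Step 1: assertion (1).} By Proposition \ref{p ele tf}, $\rank F = h^0(A)$, $c_1(F)=-C\sim -H$ and $c_2(F)=\deg A$. Since $C^2=2g-2$, we get
\[
s(F)=h^0(A)+(g-1)-\deg A .
\]
By Riemann--Roch on $C$, this equals $h^1(A)$. Rank and $s$ are additive on direct sums, so $h^0(A)=\sum_i \rank F_i = nr$ and $h^1(A)=\sum_i s(F_i)=ns$. Riemann--Roch also gives $h^0(A)-h^1(A)=\deg A+1-g\le 0$, because $\deg A\le g-1$. Hence $nr\le ns$, that is, $r\le s$.

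\emph{Step 2: assertion (4).} Proposition \ref{p ele tf}(4) gives $H^0(F)=0$ and $H^1(F)\hookrightarrow H^0(A)\otimes H^1(S,\MO_S)=0$. Riemann--Roch for bundles on a K3 surface gives $\chi(F)=\rank F+s(F)=h^0(A)+h^1(A)$, and therefore $h^2(F)=\chi(F)$.

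\emph{Step 3: assertion (3).} We apply Proposition \ref{p ele tf}(6) with $X=S$. Its hypotheses hold: $S$ is regular, $H^1(S,\MO_S)=0$, and $\MO_S(C)|_C\otimes \MO_C(-A)\simeq \MO_C(K_C-A)$ is globally generated (adjunction, $K_S\sim 0$). So $F^\vee$ is globally generated. Hence each direct summand $F_i^\vee$ is globally generated, and so is $\det F_i^\vee=\MO_S(H_i)$. In particular $|H_i|$ is base point free.

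For $h^0(H_i)\ge 2$, suppose instead $H_i\sim 0$. A globally generated bundle with trivial determinant is trivial: take $r$ general sections, whose determinant is a nonzero, hence nowhere vanishing, section of $\MO_S$. Then $F_i\simeq \MO_S^{\oplus r}$, and $H^0(F_i)\ne 0$ contradicts $H^0(F)=0$ from Step 2. Thus $H_i$ is a nonzero effective base point free divisor, and $h^0(H_i)\ge 2$.

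\emph{Step 4: assertion (2).} We use Mukai's inequality for the simple bundle $F_i$. By Serre duality with $K_S\sim 0$, $\dim\Ext^2(F_i,F_i)=\dim\Hom(F_i,F_i)=1$. Hence
\[
\chi(F_i,F_i)=2-\dim\Ext^1(F_i,F_i)\le 2 .
\]
On the other hand, Riemann--Roch gives
\[
\chi(F_i,F_i)=-\langle v(F_i),v(F_i)\rangle = 2rs-H_i^2
\]
for the Mukai vector $v(F_i)=(r,-H_i,s)$. Combining the two, $2rs-H_i^2\le 2$, which is the claimed $rs\le \tfrac12H_i^2+1$.

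The main point needing care is this last Riemann--Roch identity for $\chi(F_i,F_i)$ with the paper's normalisation $s=\rank+\mathrm{ch}_2$. I would verify it directly:
\[
\chi(E,E)=\chi(E^\vee\otimes E)=r^2\chi(\MO_S)+\mathrm{ch}_2(E^\vee\otimes E) = 2r^2+2r\,\mathrm{ch}_2(E)-c_1(E)^2=2rs-c_1(E)^2 .
\]
This uses only standard facts and Riemann--Roch on the smooth K3 surface $S$.
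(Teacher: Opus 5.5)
Your proposal is correct and follows essentially the same route as the paper. In both, (1) and (4) come from Mukai-vector additivity and Riemann--Roch on $C$; (3) comes from global generation of $F^\vee$, which the paper obtains by dualising the defining sequence by hand, i.e.\ exactly Proposition \ref{p ele tf}(6); and (2) comes from Mukai's inequality for the simple bundle $F_i$, which the paper cites from \cite[Lemma 2.1]{Sha} rather than deriving from $\chi(F_i,F_i)=2rs-H_i^2$ as you do. Your explicit exclusion of $H_i\sim 0$ via $H^0(F)=0$ is a useful extra, since the paper's bound $h^0(H_i)\geq \chi(H_i)$ tacitly needs $H_i\not\sim 0$.
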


\begin{proof}
Let us show (1). 
We have 
\[
 h^0(A)= \rank (H^0(A) \otimes_k \MO_S) = 
\rank F= \sum_{i=1}^n \rank F_i = nr.  
\]
The additivity of Mukai vectors and Proposition \ref{p ele tf} imply that 
\[
ns = ns(F_i)  =s(F) = \rank F+ \frac{1}{2} c_1(F)^2 -c_2(F)  = 
h^0(A) + \frac{1}{2} H^2 - \deg A 
\]
\[
= h^0(A) + g-1 -\deg A = h^1(A). 
\]
By $\deg A \leq g-1$ and the Riemann--Roch theorem, we get  
\[
n(r-s) = \chi(A) = \deg A +1 -g \leq 0, 
\]
and hence $r \leq s$. Thus (1) holds. 
The assertion (2) follows from the fact that $F_i$ is simple  \cite[Lemma 2.1]{Sha}.

Let us show (3). 
Applying $\cHom(-, \MO_S)$ to the exact sequence $0 \to F \to H^0(A) \otimes \MO_S \to A \to 0$, we get 
\[
0 \to H^0(A)^{\vee} \otimes \MO_S \to F^{\vee} \to \MO_C(K_C-A) \to 0
\]
where the last term can be computable by Grothendieck duality: 
$\cExt^1(i_*A, \MO_S) \simeq 
\cHom(A, \omega_C) \simeq \MO_C(K_C-A).$ 
Since 
$H^0(A)^{\vee} \otimes \MO_S$ and $\MO_C(K_C-A)$ are globally generated, 
it follows from $H^1(\MO_S)=0$ that $F^{\vee}$ and each direct summand $F_i^{\vee}$ are globally generated. 
By $\det F_i^{\vee} \simeq \MO_S(H_i)$, $|H_i|$ is base point free. 
We then get $h^0(S, H_i) \geq \chi(S, H_i) = 2 + \frac{1}{2}H_i^2 \geq 2$. 
Thus (3) holds.



Let us show (4). 
It follows from  Proposition \ref{p ele tf} that  $H^0(F) = H^1(F)=0$. 
Applying  $\chi(S, -)$ to 
\[
0 \to F \to H^0(A) \otimes\MO_S \to A \to 0, 
\]
we get $\chi(A) + \chi(F) = \chi(  H^0(A) \otimes\MO_S ) = 2h^0(A)$, which implies $\chi(F) = h^0(A) + h^1(A)$. 
Thus (4) holds. 
\end{proof}

\begin{thm}\label{t main1}
Let $X$ be a prime Fano threefold 
such that $|-K_X|$ is very ample. 
Let $S_1$ and $S_2$ be 
smooth members of $|-K_X|$ 
such that $C := S_1 \cap S_2$ is a smooth curve. 
Let $A$ be a Cartier divisor on $C$ such that 
$|A|$ and $|K_C-A|$ are base point free. 
Then $h^0(A) h^1(A) \leq g$. 
\end{thm}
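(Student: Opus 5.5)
The plan is to reduce to the split situation of Notation \ref{n main setting}, bound $h^0(A)h^1(A)$ in terms of $n$, $r$, $s$, and then rule out or control the cases $n\geq 2$ by the weak Brill--Noether inequality.

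\emph{Reduction.} Since $h^0(A)h^1(A)=h^0(K_C-A)h^1(K_C-A)$ and the hypotheses are symmetric in $A$ and $K_C-A$, we may assume $\deg A\leq g-1$. The quantities $h^0(A)$, $h^1(A)$ and the genus are unchanged by base change to a larger algebraically closed field. Hence Corollary \ref{c semisimple2} lets us assume the full set-up of Notation \ref{n main setting}: a smooth $S\supset C$ in $|-K_X|$ and $F=F_{S,C,A}=F_1\oplus\cdots\oplus F_n$ with Galois-conjugate simple summands. By Proposition \ref{p BN H_i bpf}(1) we get $h^0(A)h^1(A)=n^2rs$ with $r\leq s$. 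If $n=1$, then $F$ is simple, so Proposition \ref{p BN H_i bpf}(2) gives $rs\leq \frac12 H^2+1=g$, and we are done. The rest of the proof treats $n\geq 2$.

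\emph{Case $n\geq 2$.} By Proposition \ref{p BN H_i bpf}(3) each $|H_i|$ is base point free, so $H_i$ is nef with $H_i^2\in 2\Z_{\geq 0}$.
\begin{enumerate}
\item If $H_1^2>0$, then $H_1$ is nef and big, so $h^0(H_1)=\frac12H_1^2+2$. Since $H-H_1\geq H_2$ and $H_2^2=H_1^2$, Proposition \ref{p weak BN generality} gives
\[
g+1\geq h^0(H_1)\,h^0(H-H_1)\geq h^0(H_1)\,h^0(H_2)=\Bigl(\tfrac12H_1^2+2\Bigr)^2 .
\]
Because $g\leq 12$, this forces $H_1^2\leq 2$.
\item The remaining possibilities are $H_1^2=2$ and $H_1^2=0$. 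In both, Proposition \ref{p BN H_i bpf}(2) gives $rs\leq 2$ or $rs\leq 1$, so $r=1$. Then each $F_i\simeq\MO_S(-H_i)$ is a line bundle, and $s=s(F_i)=1+\frac12H_i^2$.
\item The summands are pairwise non-isomorphic because $\Hom(F_i,F_j)=0$. Hence the $H_i$ are distinct nef classes with $H_i\cdot H_j\geq 1$ for $i\neq j$, by Hodge index and nefness.
\end{enumerate}

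\emph{Key estimate.} I will apply the weak Brill--Noether inequality with the more precise value of $h^0(H-H_1)$. Since $H-H_1$ is nef and big, Riemann--Roch and vanishing give $h^0(H-H_1)=\frac12(H-H_1)^2+2$. Using $H_1\cdot H=(2g-2)/n$, which holds by Galois symmetry, this equals
\[
g+1-\tfrac{2g-2}{n}+\tfrac12H_1^2 .
\]
Combined with $h^0(H_1)\geq 2$, Proposition \ref{p weak BN generality} then forces $n(g+1)\lesssim 4g-4$, so $n\leq 3$. For each pair $(n,H_1^2)$ I then compare the target $n^2s\leq g$ with the intersection identity
\[
2g-2=H^2=nH_1^2+\sum_{i\neq j}H_i\cdot H_j .
\]

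\emph{Main obstacle.} The hardest part is this final bookkeeping for small $g$. Examples are $n=2$ with $g=3$, and $n=3$ with $g\in\{7,8\}$. The crude estimates above do not immediately give $n^2s\leq g$ there. My first attempt will be to sharpen the lower bounds on $H_i\cdot H_j$; for instance, $H_i\cdot H_j=1$ makes $H_i+H_j$ a genus-$2$ class with $h^0=3$. These sharper bounds, fed into $2g-2=\sum H_i\cdot H_j$ and the inequality $h^0(H_1)h^0(H-H_1)\leq g+1$, should exclude the leftover configurations. Failing that, I will use that $C\in|H|$ and $A=\MO_C(\sum H_i)$-type descriptions restrict the possible $\deg A$, and check the few remaining numerical cases by hand.
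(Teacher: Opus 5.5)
Most of your plan is sound: the reduction, the case $n=1$, the bound $H_1^2\le 2$, the deduction $r=1$ and $F_i\simeq\MO_S(-H_i)$, and the bound $n\le 3$ from the exact value of $h^0(H-H_1)$. The last is a real shortcut: for $n\ge 4$ you get the contradiction from Proposition \ref{p weak BN generality} alone, whereas the paper goes through a curve argument. The cases $(n,H_1^2)=(2,2)$ (weak BN gives $g\ge 8=n^2s$) and $(3,2)$ also close.

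However, the proof is not complete, and the part you defer as bookkeeping is exactly where your tools stop working. For $n=3$ and $H_i^2=0$ you need $9\le g$. Integrality of $H_1\cdot H=(2g-2)/3$ leaves only $g=7$, and there everything is rigid:
\begin{itemize}
\item $H_i\cdot H_j=2$ for all $i\ne j$;
\item $h^0(H_1)=2$, since $H^1(\MO_S(-H_1))\subset H^1(F)=0$;
\item $h^0(H_2+H_3)=4$.
\end{itemize}
Hence $h^0(H_1)h^0(H-H_1)=8=g+1$. This configuration satisfies the weak inequality with equality and all your intersection identities. So no sharpening of lower bounds on $H_i\cdot H_j$ can exclude it: you need a bound $\le g$, not $\le g+1$, and Proposition \ref{p weak BN generality} cannot supply it.

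Your other leftover case, $n=2$, $H_i^2=0$, $g=3$, is easier but also unaddressed. There $r=s$ forces $\deg A=g-1=2$ with $h^0(A)=2$, so $C$ would be hyperelliptic, contradicting very ampleness of $K_C=\MO_C(1)$.

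The missing idea is the paper's self-referential step. Argue by contradiction, and first show with the tools you already have that every counterexample $A$ with $\deg A\le g-1$ has $H_i^2=0$, $r=s=1$ and $\deg A=g-1$. Then take $A':=H_1|_C$. It has three properties:
\begin{itemize}
\item It satisfies the hypotheses of the theorem, since $|H_1|$ and $|(H_2+\cdots+H_n)|_C|=|K_C-A'|$ are base point free.
\item $h^0(A')=h^0(S,H_1)\ge 2$, because $H^{<2}(S,-(H_2+\cdots+H_n))=0$ for $n\ge 3$.
\item $\deg A'=(2g-2)/n<g-1$.
\end{itemize}
So $A'$ is not a counterexample, i.e.\ $h^0(A')h^1(A')\le g$. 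On the other hand, Riemann--Roch gives $h^1(A')\ge 2+(g-1)-\frac{2g-2}{3}=\frac{g+5}{3}$, hence $h^0(A')h^1(A')\ge\frac{2g+10}{3}>g$ for $g<9$, a contradiction.

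In your $g=7$ configuration, $A'$ is a $g^1_4$ with $h^0(A')h^1(A')=8$. Running your own argument on it yields $n'=2$ and $H_i'^2=2$, and then the weak inequality forces $g\ge 8$. Without this bootstrapping, from the weak bound on $S$ to the sharp bound for an auxiliary divisor on $C$, the proof does not close.
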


\begin{proof}
Suppose that $h^0(A) h^1(A) > g$. 
Let us derive a contradiction. 
After switching $A$ and $K_C-A$ if necessary, 
we may assume $\deg A \leq g-1$. 
Moreover, taking a suitable base change, 
there exist a smooth member $S$ of $|-K_X|$ containing $C$ 
and vector subbundles $F_1, ..., F_n$ of the vector bundle $F := F_{S, C, A}$ 
satisfying the properties (1)--(3) in Corollary \ref{c semisimple2}.
In what follows, we use Notation \ref{n main setting}. 
In particular, each $|H_i|$ is base point free 
(Proposition \ref{p BN H_i bpf}).  
We have $n\geq 2$, as otherwise we would get the following contradiction: 
\[
g < h^0(A) h^1(A) = rs \leq \frac{1}{2}H^2 +1 = g. 
\]

\setcounter{step}{0}

\begin{step}\label{s1 t main1}
It holds that $H_i^2 < 4$ for every $i$. 
\end{step}

\begin{proof}[Proof of Step \ref{s1 t main1}]
Suppose that $H_i^2 \geq 4$ for some (every) $i$. 
Since $H_i$ is nef and big, we get 
\[
h^0(H_i) =\chi(S, H_i) = \frac{1}{2}H_i^2 + 2 \geq 4. 
\]
By the weak BN-generality (Proposition \ref{p weak BN generality}), we 
obtain the following contradiction: 
\[
13 \geq g +1 \geq h^0(H_1)h^0(H_2+ \cdots + H_n) \geq 
h^0(H_1)h^0(H_2) \geq 4 \cdot 4 =16. 
\]
This completes the proof of Step \ref{s1 t main1}. 
\end{proof}


\begin{step}\label{s2 t main1}
It holds that $H_i^2 \neq 2$ for every $i$. 
\end{step}

\begin{proof}[Proof of Step \ref{s2 t main1}]
Suppose that  $H_i^2 =2$  for some (every) $i$. 
Then $h^0(H_i) = 3$ and 
Proposition \ref{p BN H_i bpf} implies 
\[
rs  \leq \frac{1}{2}H_1^2 +1 = 2. 
\]
By the weak BN-generality  (Proposition \ref{p weak BN generality}), we get 
\[
g+1 \geq 
h^0(H_1)h^0(H_2+ \cdots + H_n) \geq  h^0(H_1)h^0(H_2) =3 \cdot 3 =9. 
\]
Hence $g \geq 8$. 
On the other hand, we have 
\[
8 \leq g < h^0(A) h^1(A) = (nr)(ns) = n^2 rs \leq 2n^2, 
\]
which implies that  $n \geq 3$. 
Since each of $H_2$ and $H_3$ is nef and big, we get $H_2 \cdot H_3 >0$. 
Then it holds that 
\[
h^0(H_2+ \cdots +H_n) \geq h^0(H_2+H_3) 
= \frac{1}{2}(H_2+H_3)^2 + 2  
\]
\[
= \frac{1}{2}(H_2^2 + H_3^2 +2H_2 \cdot H_3) +2 \geq  \frac{1}{2} (2 + 2+ 2 \cdot 1) +2 = 5.
\]
Hence $h^0(H_1)h^0(H_2+ \cdots + H_n) \geq 3 \cdot 5 =15 > 12+1 \geq g+1$, 
which contradicts the weak BN-generality  (Proposition \ref{p weak BN generality}). 
This completes the proof of Step \ref{s2 t main1}. 
\end{proof}

\begin{step}\label{s3 t main1}
For every $i$, it holds that $H_i^2=0$, 
$r=s=1$, 
$\deg A = g-1$, and $n \geq 3$. 
\end{step}

\begin{proof}[Proof of Step \ref{s3 t main1}]
We get $H_i^2 =0$ by 
$H_i^2 \in 2\Z_{\geq 0}$, Step \ref{s1 t main1}, and Step \ref{s2 t main1}. 
We have $rs \leq \frac{1}{2} H_1^2 +1 =1$, which implies $r=s=1$. 
It follows from the Riemann-Roch theorem for $C$ 
that $0 = nr -ns = h^0(A) -h^1(A) = \chi(A) = \deg A  + 1-g$. Therefore, $\deg A = g-1$. 


Suppose that $n <3$, i.e., $n=2$. 
Then $h^0(A)=h^1(A)=2$, which implies $g < h^0(A) h^1(A) =4$. 
Therefore, $g=3$ and $\deg A =2$. 
By $\deg A = h^0(A) = 2$, the induced morphism $\varphi_{|A|} : C \to \P^1_k$ is a double cover. 
Then $C$ is a hyperelliptic curve, which contradicts the fact that $|K_C|$ is very ample \cite[Ch.\ V, Proposition 5.2]{Har77}. 
This completes the proof of Step \ref{s3 t main1}. 
\end{proof}

Since $H$ is nef and big, $H$ is $1$-connected. 
After permuting $H_1, ..., H_n$ if necessary, we may assume $H_2 \cdot H_3 >0$. 
Then $H_2 + H_3+ \cdots +H_n$ is nef and big. 
Set $A' := H_1|_C$. 
By $H^{<2}(S, -(H_2+ \cdots +H_n))=0$, 
the restriction map 
\[
H^0(S, \MO_S(H_1)) \to H^0(C, \MO_S(H_1)|_C) =H^0(C, A')
\]
is an isomorphism. Then $h^0(C, A') = h^0(S, H_1) \geq 2$. 
By $n \geq 3$, it holds that 
\begin{equation}\label{e1 s3 t main1}
\deg A' = \frac{2g -2}{n} < g-1. 
\end{equation}

\begin{step}\label{s4 t main1}
$h^0(A') h^1(A') \leq g$. 
\end{step}

\begin{proof}[Proof of Step \ref{s4 t main1}]
Suppose 
$h^0(A') h^1(A') > g$. 
Note that $|K_C -A'| = |(H_2+ \cdots + H_n)|_C|$ is base point free, and hence $A'$ satisfies the assumption of Theorem \ref{t main1}. 
Then the above argument is applicable after replacing $A$ by $A'$. 
Hence the conclusion of Step \ref{s3 t main1} for $A'$ implies $\deg A' = g-1$, 
which contradicts (\ref{e1 s3 t main1}). 
This completes the proof of Step \ref{s4 t main1}. 
\end{proof}

\begin{step}\label{s5 t main1}
It holds that $n  <4$. 
\end{step}

\begin{proof}[Proof of Step \ref{s5 t main1}]
Suppose that $n \geq 4$. 
Then we get 
\[
h^1(A') = h^0(A') + g-1 - \deg A' \geq  2 +g-1 - \frac{2g-2}{n} 
\geq g+1 - \frac{2g-2}{4} = \frac{g+3}{2}
\]
and 
\[
h^0(A') h^1(A') \geq 2 \cdot  \frac{g+3}{2} = g+3 >g, 
\] 
which contradicts Step \ref{s4 t main1}. 
This completes the proof of Step \ref{s5 t main1}. 
\end{proof}

\begin{step}\label{s6 t main1}
It holds that $n  \neq 3$.  
\end{step}

\begin{proof}[Proof of Step \ref{s6 t main1}]
Suppose that $n =3$. 
Then $g < h^0(A) h^1(A) = n^2 = 9$. 
It holds that 
\[
h^1(A') = h^0(A') + g-1 - \deg A' \geq  2 +(g-1) - \frac{2g-2}{3} 
= \frac{g+5}{3} 
\]
and 
\[
h^0(A') h^1(A') \geq 2 \cdot  \frac{g+5}{3} = \frac{2g+10}{3} >g,
\] 
where the last inequality is guaranteed by $g <9$. 
This contradicts Step \ref{s4 t main1}. 
This completes the proof of Step \ref{s6 t main1}. 
\end{proof}
Step \ref{s3 t main1},
Step \ref{s5 t main1}, and 
Step \ref{s6 t main1} yield a contradiction. 
This completes the proof of Theorem \ref{t main1}.     
\end{proof}

\begin{thm}\label{t main2}
Let $X$ be a prime Fano threefold 
such that $|-K_X|$ is very ample. 
Then the following hold. 
\begin{enumerate}
\item If $S$ is a smooth member of $|-K_X|$, 
then $(S, -K_X|_S)$ is BN-general. 
\item If $S_1$ and $S_2$ are smooth members of $|-K_X|$ 
such that $C := S_1 \cap S_2$ is a smooth curve, then 
$C$ is BN-general. 
\end{enumerate}
\end{thm}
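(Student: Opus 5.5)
I would prove (2) first and then deduce (1) from it.

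\emph{Part (2).} Let $A$ be a Cartier divisor on $C$; we must show $h^0(A)h^1(A)\le g$. By Remark \ref{r BN curve deg=1} we may assume $h^0(A)\ge 2$ and $h^1(A)\ge 2$. Write $A=A'+B$, where $A'$ is the mobile part of $|A|$ and $B\ge 0$ is its base locus. Then $h^0(A')=h^0(A)$ and $h^1(A')\ge h^1(A)$, so it suffices to treat $A'$. Next write $K_C-A'=N+B'$, where $N$ is the mobile part of $|K_C-A'|$. Set $A'':=K_C-N=A'+B'$. Then $|K_C-A''|=|N|$ is base point free and $h^1(A'')=h^0(N)=h^1(A')$, while $h^0(A'')\ge h^0(A')$. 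However, $|A''|$ need not be base point free, so we iterate. Alternate the two operations: remove the base points of $|A|$ and remove the base points of $|K_C-A|$. Each step weakly increases $h^0(A)h^1(A)$ and strictly decreases either $\deg A$ or $\deg(K_C-A)$ whenever something is actually removed. This suggests that the process terminates, but the two steps move $\deg A$ in opposite directions, so termination is not automatic.

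To make termination rigorous, I would instead choose, among all divisors $D$ with $h^0(D)h^1(D)\ge h^0(A)h^1(A)$ and $h^0(D),h^1(D)\ge 2$, one minimizing $h^0(D)+h^1(D)$, and among those one minimizing $\deg D$. If $|D|$ had a base point $P$, then $D-P$ would have the same $h^0$ and $h^1$ increased by one. That breaks the minimality of $h^0+h^1$, so I would need a different measure. The simplest fix is to use the symmetric construction: pick $D$ maximizing $h^0(D)h^1(D)$ among all divisors (only finitely many values occur), and among the maximizers choose one with $h^0(D)+h^1(D)$ maximal. Removing a base point $P$ of $|D|$ gives $h^0(D-P)=h^0(D)$ and $h^1(D-P)=h^1(D)+1$, which raises the product since $h^0\ge 2$, contradicting maximality. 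Symmetrically, if $|K_C-D|$ has a base point, add it to $D$. Hence a maximizer has both $|D|$ and $|K_C-D|$ base point free, and Theorem \ref{t main1} gives $h^0(A)h^1(A)\le h^0(D)h^1(D)\le g$. If the maximal product is at most $g$ there is nothing to prove, and if it exceeds $g$ then $h^0(D),h^1(D)\ge 2$ automatically. The base change to a larger field inside Theorem \ref{t main1} is already handled there.

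\emph{Part (1).} Let $M$ be a Cartier divisor on $S$ with $M\not\sim 0,H$. We may assume $h^0(M)\ge 2$ and $h^0(H-M)\ge 2$; otherwise $h^0(M)h^0(H-M)\le h^0(H)$ fails only trivially and the bound $\le g$ follows. In the case of a zero factor the inequality is trivial. In the case where $h^0(M)=1$ or $h^0(H-M)=1$, I would argue as follows: the product is then at most $h^0(H-E)$ for a nonzero effective $E$, which is at most $g$ since $h^0(H)=g+1$ and the restriction $H^0(H)\to H^0(H|_E)$ is nonzero.

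Now choose a smooth $S'\in|-K_X|$ with $C:=S\cap S'$ a smooth curve, which is possible by Bertini since $|H|$ is very ample on $S$. Set $A:=M|_C$. The sequence $0\to\MO_S(M-H)\to\MO_S(M)\to\MO_C(A)\to0$ gives $h^0(A)\ge h^0(M)-h^0(M-H)=h^0(M)$, since $h^0(M-H)=0$ (because $h^0(H-M)\ge 2$ and $M\not\sim H$). By Serre duality $h^1(A)=h^0(K_C-A)$ and $K_C-A=(H-M)|_C$, so the same argument gives $h^1(A)\ge h^0(H-M)$. Then part (2) yields $h^0(M)h^0(H-M)\le h^0(A)h^1(A)\le g$.

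\emph{Main obstacle.} The step I expect to be hardest is the reduction to base-point-free $A$ and $K_C-A$ in part (2). It is handled by the extremal choice described above, maximizing first the product and then $h^0+h^1$.
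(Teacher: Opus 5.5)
Your proposal is correct and follows essentially the same route as the paper. Your extremal choice of $D$ maximizing $h^0(D)h^1(D)$ is the standard reduction (which the paper imports from Shatova's Lemma 2.4) to the case where $|D|$ and $|K_C-D|$ are base point free, after which Theorem~\ref{t main1} applies; and your restriction argument via $0\to\MO_S(M-H)\to\MO_S(M)\to\MO_C(M|_C)\to 0$ is the content of Theorem 2.10(1) of Bayer--Kuznetsov--Macr\`{\i}, which the paper cites for (2)$\Rightarrow$(1). (The tie-breaker maximizing $h^0+h^1$ is unnecessary, since adding or removing a base point already strictly increases the product.)
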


\begin{proof}
Since (2) implies (1) \cite[Theorem 2.10(1)]{BKM24}, let us show (2).
Let $C$ be a smooth curve as in the statement. 
Take a divisor $A$ on $C$. 
It suffices to show that $h^0(A) h^1(A) \leq g$. 
By the same argument as in \cite[Lemma 2.4]{Sha}, 
we may assume that $|A|$ and $|K_C-A|$ are base point free. 
Then we get $h^0(A) h^1(A) \leq g$ by Theorem \ref{t main1}. 
\qedhere


\end{proof}

\begin{rem}
In characteristic zero, 
the existence of a BN-general curve of the form $C =S_1 \cap S_2$ on a prime Fano threefold follows from \cite{Laz86} and 
the fact that 
a very general member $S$ of $|-K_X|$ satisfies $\Pic S = \Z (-K_X|_S)$. 
However, the latter property (i.e., $\Pic S = \Z (-K_X|_S)$) fails for the Fermat quartic threefold 
\[
X := \{ x_0^4+x_1^4+x_2^4+x_3^4+x_4^4=0\} \subset \P^4_k
\]
of characteristic three. 
Take an arbitrary smooth member $S$ of $|-K_X|$, which is a hyperplane section of $X$. 
It suffices to show that  $S$ is supersingular (i.e., $\rho(S)=22$). 
Otherwise, $S$ is quasi-$F$-split \cite[Theorem 4.4]{Yob19}. 
Then $X$ is quasi-$F$-split by \cite[Theorem 4.6]{KTTWYY1}, 
which contradicts \cite[Example 4.29]{KTY-Fedder2}. 
\end{rem}

\section{Partial simultaneous resolution via minimal model program}


\begin{lem}\label{l simul resol singularities}
Let $(R, \m)$ be a discrete valuation ring 
of mixed characteristic such that $\kappa := R/\m$ is algebraically closed. 
Let $\pi : Z \to \Spec R$ be a flat projective morphism, 
where $Z$ is an integral normal scheme. 
Assume that the closed fibre $S:=Z_\kappa$ is an integral scheme 
such that the normalisation $S^N$ of $S$ is Gorenstein and strongly $F$-regular. 
Then 
the following are equivalent for the induced morphism $h: S^N \to Z$:  
\begin{enumerate}
\item $S$ is normal. 
\item 
$(Z, S)$ is plt and $h^*(K_Z+S) \equiv K_{S^N}$. 
\end{enumerate}
\end{lem}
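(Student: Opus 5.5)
We prove the two implications separately. Throughout, $S = \pi^*(\text{closed point})$ is a Cartier divisor on $Z$ (it is the principal divisor of a uniformiser of $R$). Hence $K_Z+S$ is $\Q$-Cartier if and only if $K_Z$ is, and $S$ is $\Q$-Cartier.

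\emph{(1) $\Rightarrow$ (2).} Assume $S$ is normal, so $S = S^N$ is Gorenstein and strongly $F$-regular, and $h$ is the closed immersion $S \hookrightarrow Z$.

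First, since $S$ is a Cartier divisor on $Z$ and $S$ is Gorenstein, $Z$ is Gorenstein along $S$. Indeed, a local ring is Gorenstein if its quotient by a nonzerodivisor is. Hence $K_Z+S$ is Cartier near $S$, and ordinary adjunction gives $(K_Z+S)|_S \sim K_S$. In particular $h^*(K_Z+S) \equiv K_{S^N}$.

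Second, for plt-ness we use inversion of adjunction in mixed characteristic via BCM-regularity, as in \cite{MS21}, \cite{MSTWW}. Strongly $F$-regular implies BCM-regular for the pair $(S,0)$. The inversion of adjunction from those papers then gives that $(Z,S)$ is purely BCM-regular near $S$, and hence plt near $S$. Away from $S$, the scheme $Z$ lies over the generic point of $\Spec R$, where plt-ness concerns only $Z$ itself. Here the plan is to localise: plt-ness of $(Z,S)$ only needs to be checked on a neighbourhood of $S$, because any divisor over $Z$ whose centre misses $S$ has centre lying over the generic point. I expect to handle this by restricting to a neighbourhood of the closed fibre, which is harmless since $\pi$ is proper and every closed point of $Z$ lies on $S$.

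\emph{(2) $\Rightarrow$ (1).} Assume $(Z,S)$ is plt and $h^*(K_Z+S)\equiv K_{S^N}$.

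First, adjunction for the normalisation gives $K_{S^N}+\mathrm{Diff} = h^*(K_Z+S)$, where $\mathrm{Diff}$ is an effective different. Moreover, $\mathrm{Diff}$ contains the conductor divisor whenever $S$ is not $R_1$. Comparing with the hypothesis forces $\mathrm{Diff}\equiv 0$. Since $\mathrm{Diff}$ is effective on the projective surface $S^N$, we conclude $\mathrm{Diff}=0$. Therefore $S$ is regular in codimension one, i.e.\ the conductor has no divisorial part.

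Second, we must show that $S$ is $S_2$. Since $S$ is a Cartier divisor on $Z$, it suffices to show that $Z$ is $S_3$ along $S$, or more directly that $\MO_S \to h_*\MO_{S^N}$ is surjective. The plan is to use the plt condition with $\mathrm{Diff}=0$ together with the BCM/$F$-regularity of $S^N$. By inversion of adjunction (\cite{MSTWW}), the pair $(Z,S)$ is purely BCM-regular near $S$. Purely BCM-regular pairs have BCM-regular, hence normal, centres; this is the mixed-characteristic analogue of the normality of plt centres, and it is exactly the statement that the BCM test ideal restriction surjects onto $\MO_{S^N}$. Applying it yields that $S$ is normal.

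The main obstacle is this last step: unlike in characteristic zero, the normality of a plt centre is not automatic. We must carefully verify the hypotheses of the BCM inversion of adjunction, namely that $K_Z+S$ is $\Q$-Cartier and that $S^N$ is BCM-regular with $\mathrm{Diff}=0$, which follows from strong $F$-regularity. We then need to extract normality of $S$ itself, and not merely that of $S^N$, from the surjectivity $\MO_Z \to h_*\MO_{S^N}$ near $S$.
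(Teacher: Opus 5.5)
Your proposal is correct and follows essentially the same route as the paper. For (1)$\Rightarrow$(2) you use that $Z$ is Gorenstein along the Cartier divisor $S$, adjunction, strong $F$-regularity $\Rightarrow$ BCM-regularity \cite{MS21}, and inversion of adjunction \cite{MSTWW} to get purely BCM-regular hence plt; for (2)$\Rightarrow$(1) you get $\mathrm{Diff}=0$ from effectivity plus numerical triviality, and then \cite{MSTWW} directly gives normality of $S$, so your intermediate $R_1$/$S_2$ discussion is unnecessary (surjectivity of $\MO_S \to h_*\MO_{S^N}$ for integral $S$ is already the same as $S=S^N$).
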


\begin{proof}
Assume (2). 
Then 
we have 
${\rm Diff}_{S^N}(0)=0$, 
where 
${\rm Diff}_{S^N}(0)$ denotes the different, 
which is an effective $\Q$-divisor on $S^N$ satisfying  $h^*(K_Z+S) = K_{S^N} + {\rm Diff}_{S^N}(0)$ 
\cite[Proposition 4.5]{Kol13}. 
Since $(S^N, 0)$ is strongly $F$-regular, 
$(S^N, 0)$ is $\text{BCM}_B$-regular \cite[Corollary 6.23]{MS21}. 
Then $S$ is normal by  \cite[Theorem A]{MSTWW}. 
Thus (1) holds.

Conversely, assume (1). 
Then $Z$ is Gorenstein around $S$. 
As the Gorenstein locus is open, $Z$ is Gorenstein. 
By the adjunction formula, we get  $(K_Z+S)|_{S} \sim K_{S}$. 
Then  $(Z, S)$ is purely $\text{BCM}_{B \to C}$-regular again 
by  \cite[Corollary 6.23]{MS21} and \cite[Theorem A]{MSTWW}. 
Therefore, $(Z, S)$ is plt  \cite[Theorem 5.4]{MSTWW}. 
Thus (2) holds. 
\end{proof}


\begin{prop}\label{p simul resol 1}
Let $(R, \m)$ be a discrete valuation ring of mixed characteristic such that $\kappa := R/\m$ is algebraically closed. 
Let $\pi : Z \to \Spec R$ be a flat projective morphism, 
where $Z$ is an integral normal scheme and 
every geometric fibre of $\pi$ is a canonical K3 surface. 
Assume that 
\begin{itemize}
\item the generic fibre $Z_\eta$ is smooth, and 
\item the fibre $Z_\kappa$ over $\kappa$ has a unique singular point and it is of type $A_1$. 
\end{itemize}
Then there exists a projective birational morphism 
$\varphi : Z' \to Z$ such that 
\begin{enumerate}
\item $Z'$ is a factorial integral normal scheme such that $(Z', Z'_\kappa)$ is plt, 
\item $\varphi_\eta : Z'_\eta \to Z_\eta$ is an isomorphism, and 
\item $\varphi_\kappa : Z'_\kappa \to Z_\kappa$ is 
either an isomorphism or the minimal resolution of $Z_{\kappa}$. 
\end{enumerate}
In particular, $\varphi : Z' \to Z$ is small. 

\end{prop}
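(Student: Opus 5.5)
The plan is to split according to whether $Z$ is regular at the singular point $P$ of $Z_\kappa$. If $Z$ is singular at $P$, I would build $Z'$ as a crepant $\Q$-factorial terminalisation of $Z$ via the mixed characteristic MMP. I would then check normality of the closed fibre and the plt property using Lemma \ref{l simul resol singularities}.

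\emph{Local structure and the regular case.} Away from $P$ the closed fibre $Z_\kappa$ is a regular Cartier divisor, and the generic fibre is smooth. Hence $Z$ is regular outside $P$. At $P$ the complete local ring should have the form $\widehat{R}[[x,y,z]]/(xy-z^2-\varpi^e u)$ for a unit $u$, where $\varpi$ is a uniformiser and $e\geq 1$. In particular $Z$ is Gorenstein with an isolated hypersurface singularity at $P$. If $Z$ is regular at $P$ (e.g.\ $e=1$), take $\varphi=\mathrm{id}$. Then $Z$ is factorial because regular local rings are UFDs. The fibre $Z_\kappa$ is normal and its only singularity is of type $A_1$, which is Gorenstein and strongly $F$-regular. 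The implication (1)$\Rightarrow$(2) of Lemma \ref{l simul resol singularities} then gives that $(Z,Z_\kappa)$ is plt.

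\emph{The singular case: constructing $Z'$.} Again by Lemma \ref{l simul resol singularities}, $(Z,Z_\kappa)$ is plt and $K_Z+Z_\kappa$ is Cartier, so $Z$ is canonical. Since its singularity is an isolated point of $\mathrm{cDV}$ type, I expect $Z$ to be terminal. I would take a resolution of $Z$ (available for threefolds in mixed characteristic) and run a $K$-MMP over $Z$, using the MMP for threefolds over mixed characteristic DVRs. This produces a $\Q$-factorial terminal $Z'$ with $\varphi\colon Z'\to Z$ crepant, i.e.\ $K_{Z'}=\varphi^*K_Z$. Because $Z$ is terminal, $\varphi$ extracts no divisors and is therefore small. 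Its exceptional locus lies over $P$, so $\varphi_\eta$ is an isomorphism and (2) holds.

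\emph{The closed fibre and factoriality.} Set $S':=Z'_\kappa=\varphi^*Z_\kappa$, so that $K_{Z'}+S'=\varphi^*(K_Z+Z_\kappa)$, and let $S'^N$ be its normalisation. Adjunction gives $K_{S'^N}+\mathrm{Diff}=\psi^*K_{Z_\kappa}$ for the induced birational morphism $\psi\colon S'^N\to Z_\kappa$, with $\mathrm{Diff}\geq 0$. Since $Z_\kappa$ is canonical, every discrepancy is $\geq 0$, which forces $\mathrm{Diff}\leq 0$, hence $\mathrm{Diff}=0$. So $\psi$ is crepant over an $A_1$ point, and $S'^N$ is either $Z_\kappa$ or its minimal resolution. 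Both are Gorenstein and strongly $F$-regular. Lemma \ref{l simul resol singularities} (2)$\Rightarrow$(1) now shows that $S'$ is normal and $(Z',S')$ is plt, giving (1) and (3) except for factoriality. For factoriality, every point of $S'$ is smooth or an $A_1$ point, so $Z'$ (with $S'$ Cartier) has only isolated hypersurface singularities. By \cite[Theorem 1.1.3]{CS24}, every $\Q$-Cartier Weil divisor on $Z'$ is Cartier, and together with $\Q$-factoriality this gives factoriality.

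\emph{Main obstacle.} I expect the hardest step to be the existence of the crepant $\Q$-factorial terminal model in mixed characteristic. This requires checking that the available mixed characteristic threefold MMP applies to $Z$ over $\Spec R$, and verifying that $Z$ is terminal, not merely canonical, so that $\varphi$ is small. I would first try to establish terminality directly from the explicit equation $xy-z^2=\varpi^e u$. If that fails, I would allow $\varphi$ to be a crepant model and use the discrepancy argument above to rule out exceptional divisors, since such divisors would appear as components of $S'$.
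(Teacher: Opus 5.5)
Your skeleton matches the paper's: resolve, run an MMP over $Z$, use the negativity lemma to make $\varphi$ small, then crepancy, the different, Lemma \ref{l simul resol singularities} and \cite{CS24}. The gap is the step you yourself single out: the MMP you propose is not one that is available. A boundary-free $K_V$-MMP over $Z$ for an arithmetic threefold is only established under restrictions on the residue characteristic, whereas the proposition allows every $p$.

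The missing idea is to put the closed fibre into the boundary. Take a log resolution $\mu\colon V\to Z$ of $(Z,S)$, with $S:=Z_\kappa$, let $E_V$ be the reduced $\mu$-exceptional divisor, and run a $(K_V+S_V+E_V)$-MMP over $Z$. Since $(Z,S)$ is plt (Lemma \ref{l simul resol singularities}), one has $K_V+S_V+E_V\equiv_\mu\sum a_iE_i$ with all $a_i>0$. So the negativity lemma still forces every $E_i$ to be contracted, and $\varphi$ is small. Moreover $V_\kappa\subset\Supp(S_V+E_V)$, which is the setting in which the paper invokes \cite[Theorem 6.2]{TY23} for the MMP.

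This route also makes three parts of your plan unnecessary:
\begin{itemize}
\item your case split into $Z$ regular or singular at $P$;
\item the local normal form, which is not valid as stated in residue characteristic $2$, where the $A_1$ singularity has Tjurina number $2$;
\item the question of terminality.
\end{itemize}
Terminality is in any case immediate. Since $K_Z+S$ is Cartier and $(Z,S)$ is plt, $a(E,Z,S)\geq 0$, hence $a(E,Z)\geq 1$ for $E$ centred in $S$. All other centres meet the regular locus $Z_\eta$.

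There is also a logical slip. Lemma \ref{l simul resol singularities}, (2)$\Rightarrow$(1), takes plt of $(Z',S')$ as a hypothesis, so it cannot be what proves plt. You must first deduce that $(Z',S')$ is plt from $K_{Z'}+S'=\varphi^*(K_Z+Z_\kappa)$ and plt of $(Z,Z_\kappa)$, as the paper does. Only then does the lemma give normality of $S'$.

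You should also say why $S'=Z'_\kappa$ is integral before speaking of $S'^N$. This follows from smallness: $\varphi^*Z_\kappa$ has no exceptional components, so it is the strict transform of $Z_\kappa$.

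The remaining steps match the paper: $\mathrm{Diff}=0$ because $Z_\kappa$ is canonical, the isomorphism/minimal-resolution dichotomy, and factoriality via isolated hypersurface singularities and \cite[Theorem 1.1.3]{CS24}.
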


\begin{proof}
Set $S := Z_\kappa$.
By Lemma \ref{l simul resol singularities}, $(Z, S)$ is plt. 
Let $\mu : V \to Z$ be a log resolution of $(Z, S)$. 
Let $\Ex(\mu) = \bigcup_{i \in I} E_i$ be the irreducible decomposition. 
For $E_V := \sum_{i \in I} E_i$, we have 
\[
K_V + S_V + E_V =\mu^*(K_Z+S) + \sum_{i\in I} a_i E_i 
\equiv_{\mu} \sum_{i\in I} a_i E_i, 
\]
where 
$a_i >0$ for every $i \in I$ and $S_V$ denotes the proper transform of $S$. 
We run a $(K_V+S_V+E_V)$-MMP over $Z$, whose existence is guaranteed by \cite[Theorem 6.2]{TY23}. 
By the negativity lemma \cite[Proposition 2.1]{TY23}, 
every $\mu$-exceptional prime divisor $E_i$ is contracted by this MMP. 
In particular, for its end result $(Z', S')$, 
the induced birational morphism $\varphi : Z' \to Z$ is 
small. 
Over the generic point $\eta \in \Spec R$, 
this MMP is a $(K_{V_\eta}+E_{\eta})$-MMP, and hence $\varphi_{\eta} : Z'_{\eta} \xrightarrow{\simeq} Z_{\eta}$. Thus (2) holds. 

By construction, we have $V_\kappa \subset \Supp(S_V + E_V)$. 
Every two-dimensional irreducible component of $V_\kappa$ except for $S_V$ 
is contracted by this MMP. 
As the closed fibre $Z'_\kappa$ is connected, 
$Z'_\kappa$ is a union of $S'$ and 
the one-dimensional irreducible components. 
On the other hand, 
$Z'_\kappa =\pi'^{-1}(\m)$ is an effective Cartier divisor on $Z'$ for the induced morphism $\pi' : Z' \to \Spec R$. 
Therefore, we get a set-theoretic equality $Z'_\kappa = S'$. 
Moreover, this equality is a scheme-theoretic one, because 
$Z'_\kappa = \varphi^{-1}(Z_\kappa) = \varphi^{-1}(S) = S'$.

Since $\varphi : Z' \to Z$ is small and $K_Z$ is Cartier, we get  
$K_{Z'} \sim \varphi^*K_Z$ and $K_{Z'}$ is Cartier. 
Since $K_{Z'} + S' \sim \varphi^*(K_Z+S)$ and $(Z, S)$ is plt, 
$K_{Z'}+S'$ is Cartier and $(Z', S')$ is plt. 
In particular,  the normalisation $S'^N$  of $S'$ is klt. 
We have 
\[
K_{S'^N} + D  \equiv (K_{Z'}+S')|_{S'^N} \equiv \varphi^*( (K_Z+S)|_S)  \equiv \psi^*K_S, 
\]
for the different $D :={\rm Diff}_{S'^N}(0) \geq 0$ \cite[Proposition 4.5]{Kol13} 
and the induced birational morphism $\psi :S'^N \to S$.  
Since $S$ is canonical, we get $D=0$. 
Then $S'$ is normal by Lemma \ref{l simul resol singularities}. 
As $K_{S'} \sim \psi^*K_S$, 
either $S' \xrightarrow{\psi, \simeq} S$ or $\psi$ is the minimal resolution of $S$. 
Thus (3) holds. 

Let us show that $Z'$ is factorial. 
Note that the non-regular locus of $Z'$ is zero-dimensional. 
As $Z'_\kappa$ is an effective Cartier divisor on $Z'$ having only hypersurface singularities, 
$Z'$ has only isolated hypersurface singularities. 
Then every $\Q$-Cartier Weil divisor is Cartier by \cite[Theorem 1.1.3(a)]{CS24}. 
This, together with $\Q$-factoriality of $Z'$, implies that $Z'$ is factorial. 
Thus (1) holds. 
\qedhere



\end{proof}

\begin{prop}\label{p simul resol 2}
Let $(R, \m)$ be a discrete valuation ring 
of mixed characteristic such that $\kappa := R/\m$ is algebraically closed. 
Let $\pi : Z \to \Spec R$ be a flat projective morphism, 
where $Z$ is an integral normal scheme. 
Assume that
every geometric fibre $Z_t$ of $\pi$ is a canonical K3 surface, 
$Z_t$ has a unique singular point $P$, and 
$P$ is of type $A_1$. 
Then 
there exists a projective birational morphism $\varphi :Z' \to Z$ 
such that 
$Z'$ is regular and 
$\varphi_t : Z'_t \to Z_t$ is the minimal resolution of $Z_t$ for every geometric point $t$ of $\Spec R$. 
\end{prop}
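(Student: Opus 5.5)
The statement to prove is Proposition \ref{p simul resol 2}. Since every geometric fibre, including the generic one, has a unique $A_1$ point, the singular locus of $\pi$ should be a section $\Sigma \subset Z$ through the two singular points. I would therefore blow up $Z$ along this section instead of running an MMP as in Proposition \ref{p simul resol 1}. The plan is to show that $Z$ is étale (or formally) locally along $\Sigma$ a family of $A_1$ singularities, so that blowing up $\Sigma$ resolves every fibre simultaneously.

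\textbf{Step 1: the singular locus is a section.} The non-smooth locus $\Sigma$ of $\pi$ is closed. It meets each geometric fibre in exactly one point, and it is proper and quasi-finite over $\Spec R$, hence finite. After giving $\Sigma$ a suitable scheme structure (for instance the Fitting/Jacobian ideal, or after passing to the henselisation, which is harmless because $R$ is complete in our application), I would show that $\Sigma \to \Spec R$ is an isomorphism. Over the closed point, an $A_1$ point is a nondegenerate quadratic singularity. Its Jacobian (Tjurina) ideal has colength $1$ in the fibre, so the relative singular scheme is unramified over $\Spec R$ at $P$. It is then an étale neighbourhood, and hence a section since $\kappa$ is algebraically closed and $R$ is henselian.

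\textbf{Step 2: local normal form.} Near $P \in Z_\kappa$, $Z$ is a hypersurface of relative dimension $2$. After completing, $\widehat{\MO}_{Z,P} \simeq R[[x,y,z]]/(f)$. The reduction $\bar f$ is a nondegenerate quadric plus higher order terms, and $\Sigma$ is given by $x=y=z=0$ after translating coordinates along the section. Since the singular locus contains the section, $f \in (x,y,z)^2$. A formal Morse lemma over $R$ then puts $f$ into the form $u\cdot(xy - z^2)$ or $xy - z^2$ up to a unit. This requires that the quadratic part has unit discriminant, which holds because it is nondegenerate mod $\m$. In characteristic $2$ one uses $xy+z^2$-type normal forms with the appropriate half-discriminant. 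This normal-form step is the main obstacle, especially the case $p=2$, where quadratic forms behave differently. I would first try to avoid a full normal form and only use that $f \in I_\Sigma^2$ with the leading form defining a smooth conic over each residue field of $\Sigma$.

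\textbf{Step 3: blow up.} Let $\varphi : Z' := \mathrm{Bl}_\Sigma Z \to Z$. Locally, $Z'$ is the strict transform of $\{f=0\}$ in $\mathrm{Bl}_{(x,y,z)} \mathbb{A}^3_R$. The exceptional divisor is the conic bundle $\{q=0\} \subset \P^2_\Sigma$, which is smooth over $\Sigma \simeq \Spec R$ because the leading form $q$ is nondegenerate on each fibre. A direct chart computation gives that $Z'$ is regular and smooth over $R$ near the exceptional locus; away from $\Sigma$ nothing changes, and there $\pi$ is smooth. Since the blowup centre $\Sigma$ is flat over $R$ and $Z$ is flat, blowing up commutes with base change to fibres. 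Concretely, $f$ restricts to an $A_1$ equation in $\kappa[[x,y,z]]$, and the $R$-flatness of the strict transform can be checked from the chart equations. Hence $Z'_t = \mathrm{Bl}_{P_t} Z_t$ for each geometric point $t$. For an $A_1$ point, $\mathrm{Bl}_{P_t} Z_t$ is the minimal resolution, with exceptional $(-2)$-curve equal to the conic. Finally, $\varphi$ is projective and birational.
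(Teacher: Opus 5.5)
Your blowup argument works, but it takes a genuinely different route from the paper's. The paper never writes down local equations. It takes a log resolution $V\to Z$ of $(Z,S)$ and runs a $(K_V+S_V+E_V-E_0)$-MMP over $Z$ (via the Takamatsu--Yoshikawa mixed-characteristic MMP). Here $E_0$ is the divisor with $a_0=1$ coming from the $A_1$ point of the generic fibre, so the output $Z'$ has a single exceptional divisor and $K_{Z'}=\varphi^*K_Z$. The paper then:
\begin{itemize}
\item uses the different to see that $S'^N\to S$ is crepant;
\item uses the BCM-regularity input of Lemma \ref{l simul resol singularities} to get normality of $S'=Z'_\kappa$;
\item uses upper semicontinuity of fibre dimension to rule out $S'\simeq S$;
\item deduces regularity of $Z'$ from regularity of the Cartier divisor $S'$.
\end{itemize}
Your route is elementary, avoids the MMP and BCM-regularity entirely, and even shows that $Z'\to\Spec R$ is smooth. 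The price is that it uses equisingularity essentially. In Proposition \ref{p simul resol 1} the generic fibre is smooth, so there is no section to blow up and a small modification is needed. That is why the paper builds the MMP argument and simply reuses it here.

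Two steps of your write-up need repair, and both fixes are easy.

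\emph{Step 1 fails for $p=2$.} For $f=z^2+xy$ one gets $(f,f_x,f_y,f_z)=(x,y,z^2)$, which has colength $2$, not $1$. Over $W(k)$ the Jacobian scheme of this family is $\Spec R[z]/(z^2,2z)$, which is neither flat nor unramified over $R$, so it must not be used as the blowup centre. Use the reduced structure instead. Since $\operatorname{char} Q=0$ and $Z_{\ol{Q}}$ has a single singular point, the singular point of $Z_\eta$ is $Q$-rational. Its closure is integral and finite over $R$, and birational to $\Spec R$. Because $R$ is normal, this closure is a section $\Sigma$, and it passes through $P$.

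\emph{Base change in Step 3 needs normal flatness.} Flatness of $Z$ and of $\Sigma$ alone does not make blowing up commute with restriction to the closed fibre; what you need is normal flatness, and you have it.
\begin{itemize}
\item You have $f\in(x,y,z)^2$, and the reduction of its initial form $q$ is a smooth conic. This holds for $A_1$ in every characteristic, e.g.\ $z^2+xy$ when $p=2$.
\item Hence $\operatorname{gr}_{I_\Sigma}\widehat{\MO}_{Z,P}\simeq\widehat{R}[x,y,z]/(q)$, which is $\widehat{R}$-flat.
\item So the exceptional divisor $E$ is a smooth conic bundle over $\Sigma$, hence regular and Cartier in $Z'$, and $Z'$ is regular along $E$. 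Away from $E$, $Z'$ is $Z\setminus\Sigma$, which is smooth over $R$.
\item $Z'_\kappa$ is then a Cartier divisor on a regular scheme and meets $E$ only in the curve $E_\kappa$. It is therefore $S_1$ and generically equal to $Z_\kappa\setminus\{P\}$, hence reduced and equal to the strict transform $\mathrm{Bl}_P Z_\kappa$.
\item Over $\eta$ the corresponding statement is flat base change.
\end{itemize}
No Morse lemma is needed for any of this.
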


\begin{proof}
Set $S := Z_\kappa$. 
Let $\mu : V \to Z$ be a log resolution of $(Z, S)$. 
For the irreducible decomposition $\Ex(\mu) := \bigcup_{i=0}^r E_i$, we set $E_V := \sum_{i=0}^r E_i$. 
As $(Z, S)$ is plt (Lemma \ref{l simul resol singularities}), 
we have 
\[
K_V + S_V + E_V =\mu^*(K_Z+S) + \sum_{i =0}^r a_i E_i, 
\]
where 
$a_i >0$ for every $i \in \{0, ..., r\}$ and $S_V$ is the proper transform of $S$. 
Over the generic point $\eta \in \Spec R$, we have 
\[
K_{V_\eta} + E_{V_\eta} = \mu^*_{\eta} K_{Z_\eta} + \sum a_i E_{i, \eta}. 
\]
Since 
the geometric generic fibre $Z_{\overline{\eta}}$ 
has a unique $A_1$-singularity, 
we may assume that 
$a_0 = 1$  after permuting the indices if necessary. 
We have 
\[
K_V + S_V + (E_V-E_0) =\mu^*(K_Z+S) + \sum_{i \geq 1} a_i E_i
\]
and we run a $(K_V+S_V+E_V-E_0)$-MMP over $Z$, whose existence is guaranteed by \cite[Theorem 6.2]{TY23}. 
Let $Z'$ be the end result. 
Then the induced projective birational morphism 
$\varphi : Z' \to Z$ has a unique $\varphi$-exceptional prime divisor $E'$, 
which is the push-forward of $E_0$. 
Then $\varphi_\eta : Z'_\eta \to Z_\eta$ is nothing 
but the minimal resolution of $Z_\eta$. 

Set $S' := Z'_\kappa$
Let us show that $\varphi_\kappa : S'  \to S$ is the minimal resolution of $S$. 
Since $E'$ is not contained in the closed fibre $Z'_{\kappa}$, 
it follows from $K_{Z'_{\eta}} =\varphi^*_\eta K_{Z_\eta}$ 
that $K_{Z'} = \varphi^*K_Z$ and 
the divisor  $S' = \pi'^*\m = \varphi^*S$ is a prime Cartier divisor. 
For the induced birational morphism $\psi : S'^N \to S$ and the different $D :={\rm Diff}_{S'^N}(0) \geq 0$ \cite[Proposition 4.5]{Kol13}, we get 
\[
K_{S'^N} + D = (K_{Z'}+S')|_{S'^N} = \varphi^*(K_Z+S)|_{S'^N} = \psi^*K_S. 
\]
As $S$ is canonical, we obtain $D=0$ and $\psi : S'^N \to S$ is either an isomorphism or the minimal resolution of $S$. 
Since $(Z', S')$ is plt, $S'$ is normal (Lemma \ref{l simul resol singularities}). 
By the upper semi-continuity of fibre dimensions, 
$\psi : S' = Z'_{\kappa} \to Z_\kappa=S$ is not an isomorphism. 
Therefore, $\psi : S' \to S$ is the minimal resolution of $S$. 

As $S'$ is a regular effective Cartier divisor on $Z'$, $Z'$ is regular around $S'$. 
Since the open subset $Z'_\eta$ is regular as well, $Z'$ is regular everywhere. 
\qedhere



\end{proof}

\begin{thm}\label{t simul resol}
Let $(R, \m)$ be a discrete valuation ring of mixed characteristic such that $R/\m$ is algebraically closed. 
Let $\pi : Z \to \Spec R$ be a flat projective morphism, 
where $Z$ is an integral normal scheme. 
Assume that every geometric fibre $Z_t$ of $\pi$ is a canonical K3 surface satisfying 
one of the following: 
\begin{itemize}
\item $Z_t$ is smooth. 
\item $Z_t$ has a unique singular point and it is of type $A_1$. 
\end{itemize}
Then there exists a projective birational morphism 
$\varphi : Z' \to Z$ from an integral normal scheme $Z'$ 
such that 
\begin{enumerate}
\item $Z'$ is factorial, and 
\item 
if  $t \to \Spec R$  is a geometric point of $\Spec R$, 
then the induced morphism $\varphi_t : Z'_t \to Z_t$ between the fibres is either an isomorphism or the minimal resolution of $Z_t$.  
\end{enumerate}
\end{thm}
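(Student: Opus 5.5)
We reduce Theorem \ref{t simul resol} to Propositions \ref{p simul resol 1} and \ref{p simul resol 2} by a case analysis on the two fibres of $\pi$. Let $\eta$ and $\kappa$ denote the generic and closed points of $\Spec R$. Write $\ol{\eta}$ for a geometric generic point; since $R/\m$ is algebraically closed, the geometric closed fibre is just $Z_\kappa$. Each geometric fibre is either smooth or has exactly one singular point, of type $A_1$. This gives four cases.

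\emph{Case 1: both fibres are smooth.} Then $Z$ is regular. Indeed, $Z_\kappa$ is a regular Cartier divisor, so $Z$ is regular along $Z_\kappa$, and $Z_\eta$ is regular. Hence $Z$ is factorial and we take $\varphi=\mathrm{id}$.

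\emph{Case 2: $Z_{\ol{\eta}}$ is singular and $Z_\kappa$ is smooth.} This cannot happen. Since $\pi$ is flat and proper, the smooth locus of $\pi$ is open and contains the closed fibre, so it is all of $Z$. Thus the generic fibre is also smooth.

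\emph{Case 3: $Z_{\eta}$ is smooth and $Z_\kappa$ has a unique $A_1$-point.} This is exactly the setting of Proposition \ref{p simul resol 1}, which gives a factorial $Z'$ with the required fibrewise behaviour. Geometric smoothness of $Z_\eta$ is what is needed, and it is the hypothesis of this case.

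\emph{Case 4: both geometric fibres have a unique $A_1$-point.} This is exactly the setting of Proposition \ref{p simul resol 2}, which gives a regular, hence factorial, $Z'$ such that each $\varphi_t$ is the minimal resolution.

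For (2) at geometric points $t$ over $\eta$ in Case 3, note that $\varphi_\eta$ is an isomorphism, so its base change $\varphi_t$ is an isomorphism. In Case 4 at the generic point, base change of a minimal resolution along $\ol{\eta}$ is still the minimal resolution: it is crepant, and the exceptional locus is a single $(-2)$-curve. Proposition \ref{p simul resol 2} states this fibrewise, so it can be quoted directly.

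The main obstacle is the bookkeeping in Case 2 and in matching hypotheses. One has to make sure the smooth locus of $\pi$ is open and that smoothness of the closed fibre propagates to the generic fibre. One also has to check that when $Z_{\ol{\eta}}$ has an $A_1$-point, the closed fibre is genuinely singular, so that Proposition \ref{p simul resol 2} rather than Case 1 applies. The case analysis above covers this.
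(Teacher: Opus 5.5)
Your proposal is correct and follows the paper's proof. A smooth closed fibre forces $\pi$ to be smooth (your Cases 1 and 2 together), so $\varphi=\mathrm{id}$ works; otherwise one applies Proposition~\ref{p simul resol 1} or Proposition~\ref{p simul resol 2} according to whether the generic fibre is smooth or singular, with the only cosmetic difference being that you check regularity of $Z$ via the regular Cartier divisor $Z_\kappa$ rather than directly from smoothness of $\pi$.
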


\begin{proof}
Set $\kappa :=R/\m$. 
If the fibre $Z_\kappa$ over $\kappa$ is smooth, 
then $\pi : Z \to \Spec R$ is a smooth morphism, and hence the assertion holds for 
the identity morphism $\varphi := {\rm id}_Z$. 
Hence we assume that $Z_\kappa$ is singular. 
If $Z_\eta$ is smooth (resp.\ singular), then the assertion follows from 
Proposition \ref{p simul resol 1} (resp.\ Proposition \ref{p simul resol 2}). 
\end{proof}

\section{Special Mukai divisors on K3 surfaces}
\label{s special Mukai div}


\subsection{Special Mukai divisors and minimal resolutions}



\begin{dfn}\label{d Mukai div}
We work over an algebraically closed field of arbitrary characteristic. 
Fix integers $r, s \geq 2$ and let $(S, H)$ be a 
quasi-polarised canonical K3 surface of genus $g = r \cdot s$. 
We say that $M$ is a {\em weakly special Mukai divisor of type $(r, s)$} 
if $M$ is a Cartier divisor on $S$ such that 
\begin{enumerate}
\item $M \cdot H = (r-1)(s+1)$, 
\item $h^0(\MO_S(M))=r$, and 
\item $H^1(\MO_S(M))= H^2(\MO_S(M))=0$. 
\end{enumerate}
We say that $M$ is a {\em special Mukai divisor of type $(r, s)$} 
if $M$ is a weakly special Mukai divisor of type $(r, s)$ such that $|M|$ is base point free. 
\end{dfn}

As we will see, base point freeness of $|M|$ is automatic in our applications. 

\begin{lem}\label{l wMukai div SD}
We work over an algebraically closed field of arbitrary characteristic. 
Fix integers $r, s \geq 2$ and let $(S, H)$ be a BN-general 
quasi-polarised canonical K3 surface of genus $g = r \cdot s$. 
Let $M$ be a Cartier divisor on $S$. 
Then $M$ is a weakly special Mukai divisor of type $(r, s)$ if and only if 
$H-M$ is a weakly special Mukai divisor of type $(s, r)$. 
\end{lem}

\begin{proof}
Assume that $M$ is a weakly special Mukai divisor of type $(r, s)$. 
By symmetry, it is enough to prove that the divisor $H-M$ 
satisfies the conditions (1)--(3) of Definition \ref{d Mukai div}. 
We have 
\[
(H - M) \cdot H = 2rs - 2 - (r - 1)(s + 1) = (r + 1)(s - 1) > 0
\]
In particular, Definition \ref{d Mukai div}(1) holds for $H-M$. 
Moreover, we get $h^2(\MO_S(H-M)) = h^0(\cO_S(M - H)) = 0$. 
The natural exact sequence
\begin{equation*}
0 \to \cO_S(M - H) \to \cO_S(M) \to \cO_C(M\vert_C) \to 0
\end{equation*}
shows that $h^0(\cO_C(M\vert_C)) \ge h^0(\cO_S(M)) = r$.
It holds that 
\[
h^1(\cO_C(M\vert_C))  =h^0(\cO_C(M\vert_C))  
- \deg(\cO_S(M)\vert_C) + g-1 \geq r  -(r - 1)(s + 1) +rs -1=s. 
\]
The above exact sequence implies that
\begin{equation*}
h^0(\cO_S(H - M)) = h^2(\cO_S(M - H)) = h^1(\cO_C(M\vert_C)) \ge s.
\end{equation*}
By BN-generality of $S$, we get 
$rs = g \geq h^0(M) h^0(H-M) \geq rs$, which implies 
$h^0(H-M) =s$, and hence Definition \ref{d Mukai div}(2) holds for $H-M$. 
It follows from  the Riemann--Roch theorem that 
\[
s -h^1(\MO_S(H-M)) = \chi(\MO_S(H-M)) = 2 + \frac{1}{2} (H-M)^2 
\]
\[
= 2+ \frac{1}{2}M^2 
+ \frac{1}{2}(H^2 -2H \cdot M) = \chi(\MO_S(M)) +
(H - M)\cdot H - \frac{1}{2}H^2 
\]
\[
= r + (r+1)(s-1) - \frac{1}{2} (2rs-2) = s. 
\]
Therefore, we get $H^1(\MO_S(H-M))=0$. Thus (3) holds. 
\end{proof}

\begin{lem}\label{l wMukai minuscule}
We work over an algebraically closed field of arbitrary characteristic. 
Fix integers $r, s \geq 2$ and let $(S, H)$ be a BN-general 
quasi-polarised smooth K3 surface of genus $g = r \cdot s$ 
such that $E := \bE(H)$ is a prime divisor 
$($cf.\ Subsection \ref{ss notation}$($\ref{def exc locus mathbbE}$))$. 
Let $M$ be a weakly special Mukai divisor of type $(r, s)$. 
Then $M$ is minuscule, that is, $M \cdot E \in \{1, 0, -1\}$. 
\end{lem}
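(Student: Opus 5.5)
The plan is to compare $M$ with $M' := M + aE$ for $a = \pm 1$ and contradict BN-generality of $(S,H)$. Since $E = \bE(H)$ is a prime divisor on the smooth K3 surface $S$ with $H \cdot E = 0$, the Hodge index theorem gives $E^2 < 0$. Adjunction then gives $E^2 = -2$, so $E \simeq \P^1$. Set $m := M \cdot E$ and suppose, for a contradiction, that $|m| \geq 2$. Let $a := 1$ if $m \geq 2$ and $a := -1$ if $m \leq -2$, so that $am - a^2 = |m| - 1 \geq 1$.

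\emph{Step 1: the new divisors are admissible for BN-generality.} Because $H \cdot E = 0$, we have $M' \cdot H = M \cdot H = (r-1)(s+1)$. This lies strictly between $0$ and $H^2 = 2rs - 2$, using $r,s \geq 2$. Hence $M' \not\sim 0$ and $M' \not\sim H$. It also follows that $H^0(-M') = 0$ and $H^0(M' - H) = 0$, since $H$ is nef and big and both divisors have negative $H$-degree. By Serre duality, $h^2(M') = 0$ and $h^2(H - M') = 0$.

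\emph{Step 2: Riemann--Roch lower bounds.} Since $h^2$ vanishes for both divisors, $h^0(M') \geq \chi(M')$ and $h^0(H - M') \geq \chi(H - M')$. Using $E^2 = -2$ and $\chi(M) = r$ (from the weakly special conditions), we get
\[
\chi(M') = 2 + \tfrac12 (M + aE)^2 = \chi(M) + am - a^2 = r + am - a^2 \geq r + 1.
\]
For the complement, $(H - M) \cdot E = -m$, so $(H - M - aE)^2 = (H - M)^2 + 2am - 2a^2$. The computation in Lemma \ref{l wMukai div SD} gives $\chi(H - M) = s$, hence
\[
\chi(H - M') = s + am - a^2 \geq s + 1.
\]

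\emph{Step 3: contradiction.} Combining the two bounds,
\[
h^0(M')\, h^0(H - M') \geq (r+1)(s+1) > rs = g.
\]
This contradicts the BN-generality of $(S,H)$ applied to the Cartier divisor $M'$. Therefore $|m| \leq 1$, i.e.\ $M \cdot E \in \{1, 0, -1\}$.

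The only real point to check is that $E$ is a $(-2)$-curve, so that the Riemann--Roch shift is exactly $am - a^2$. This follows from the Hodge index theorem together with adjunction on the smooth K3 surface $S$. The rest is bookkeeping of Euler characteristics.
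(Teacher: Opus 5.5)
Your proof is correct. It reaches the same contradiction as the paper (twist $M$ by $\pm E$ and violate BN-generality), but you bound the two $h^0$'s differently.

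The paper first invokes Lemma \ref{l wMukai div SD}, itself a consequence of BN-generality, to reduce to $M\cdot E\geq 2$. It then works along $E\simeq\P^1$:
\begin{itemize}
\item the sequence $0\to\MO_S(M)\to\MO_S(M+E)\to\MO_E(M+E)\to 0$, together with $H^1(\MO_S(M))=0$, gives $h^0(M+E)>r$;
\item since $(H-M)\cdot E\leq -2$, $E$ is a fixed component of $|H-M|$, so $h^0(H-M-E)=h^0(H-M)=s$.
\end{itemize}

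You instead use only Riemann--Roch and the $h^2$-vanishing forced by $0<M'\cdot H<H^2$. This handles both signs of $M\cdot E$ at once and bypasses the duality lemma; you only borrow the purely arithmetic identity $\chi(H-M)=s$ from its proof. It also gives the stronger bound $h^0(M')\,h^0(H-M')\geq (r+|m|-1)(s+|m|-1)$.

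Your route shows that only the numerical data $M\cdot H=(r-1)(s+1)$ and $\chi(M)=r$ matter, so on such $(S,H)$ every class with these invariants is minuscule. The paper's route uses the individual cohomological conditions $H^1(\MO_S(M))=0$ and $h^0(H-M)=s$. In exchange, it fits into the duality framework of Lemma \ref{l wMukai div SD}, which the paper needs anyway for Proposition \ref{p weakly is not weakly}.
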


\begin{proof}
It follows from Lemma \ref{l wMukai div SD} 
that $H-M$ is a weakly special Mukai divisor of type $(s, r)$. 
By $(H -M) \cdot E = -M \cdot E$, it suffices to show that $M \cdot E \leq 1$. 
Suppose $M \cdot E \geq 2$. 
Then $(M + E) \cdot E  \geq 2 + (-2)= 0$, which implies 
$H^0(\MO_E(M+E)) >0$. 
The exact sequence
\begin{equation*}
0 \to \cO_S(M) \to \cO_S(M +E) \to \cO_E(M + E) \to 0,
\end{equation*}
together with $H^1(\MO_S(M))=0$ (Definition \ref{d Mukai div}), 
implies that $h^0(\cO_S(M + E)) > h^0(\cO_S(M)) = r$.

On the other hand, we have $(H - M) \cdot E  = - M \cdot E \leq -2$, 
which implies $H^0(\cO_E(H - M)) = 0$. 
The exact sequence
\begin{equation*}
0 \to \cO_S(H - M - E) \to \cO_S(H - M) \to \cO_E(H - M) \to 0
\end{equation*}
implies that $h^0(\cO_S(H - M - E)) = h^0(\cO_S(H - M)) = s$.
We conclude that
\begin{equation*}
h^0(\cO_S(M + E)) \cdot h^0(\cO_S(H - M - E)) > rs = g, 
\end{equation*}
contradicting  BN generality of~$(S,H)$. 
Therefore, $M$ is minuscule.
\end{proof}

\begin{prop}\label{p weakly is not weakly}
We work over an algebraically closed field of arbitrary characteristic. 
Fix integers $r, s \geq 2$ and let $(S, H)$ be a BN-general 
quasi-polarised smooth K3 surface of genus $g = r \cdot s$ 
such that $E := \bE(H)$ is either empty or a prime divisor. 
Take a weakly special Mukai divisor $M$ on $S$ of type $(r, s)$. 
Let $\mu : S \to \ol{S}$ be the birational contraction such that 
$\Ex(\mu)=E$. 
Then the following hold:  
\begin{enumerate}
\item 
$|M|$ is base point free, that is, 
$M$ is a special Mukai divisor $M$ on $S$ of type $(r, s)$. 
\item 
The push-forward $\ol{M} := \mu_* M$ is 
a special Mukai divisor of type $(r, s)$ on $\ol{S}$ 
satisfying $M= \mu^* \ol{M}$. 
\end{enumerate}
\end{prop}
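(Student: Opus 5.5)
Part (1) is a base point freeness statement on a smooth K3 surface, and I would attack it through the fixed part of $|M|$. First, $M$ is effective since $h^0(M)=r\geq 2$. Write $M = M' + B$, where $M'$ is the mobile part and $B$ the fixed part. By the K3 surface results already used in the proof of Proposition \ref{p weak BN generality} (\cite[Proposition 3.6 and Proposition 3.10]{FanoI}), $|M'|$ is base point free. So it suffices to show $B=0$, and I would split into two cases: either $M'$ is a multiple of an elliptic pencil, or $M'$ is big. The first step uses BN-generality: since $h^0(M')=h^0(M)=r$ and $h^0(H-M') \geq h^0(H-M)=s$ (here $h^0(H-M)=s$ by Lemma \ref{l wMukai div SD}), BN-generality forces $h^0(H-M')=s$. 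The same holds for $H-M$, which is a weakly special Mukai divisor of type $(s,r)$.

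Next I would argue numerically. Riemann--Roch with $H^1(M)=H^2(M)=0$ gives $M^2 = 2r-4$. If $M'$ is big and nef, then $h^1(M')=0$, so $h^0(M')=\frac12 M'^2+2$ forces $M'^2=M^2$. Expanding $M^2=(M'+B)^2$ then gives $2M'\cdot B + B^2=0$. Using $h^0(M'+B)=h^0(M')$ and the standard fact that each component $\Gamma$ of the fixed part is a $(-2)$-curve, I want to reach a contradiction via the pair $(M', H-M')$. The plan is to show $\chi(H-M') > s$ when $B\neq 0$, and then use $h^2(H-M')=0$ to get $h^0(H-M') > s$, contradicting the equality above. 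Concretely, $\chi(H-M') - \chi(H-M) = \frac12\bigl((H-M')^2-(H-M)^2\bigr) = B\cdot(H-M) + \frac12 B^2 = B\cdot H - B\cdot M' - B^2 - \frac12 B^2 + \dots$, and the sign has to be controlled. This is where the minuscule property (Lemma \ref{l wMukai minuscule}) and $H\cdot B>0$ for components not equal to $E$ should enter. In the elliptic case $M'=(r-1)F$, where $F$ is an elliptic curve, BN-generality gives $h^0(F)\,h^0(H-F)\leq g$, and I expect this to contradict $H\cdot F$ small, via a similar Riemann--Roch computation. I expect this fixed-part analysis to be the main obstacle. The first thing I would try is the cleaner route of \cite[Lemma 2.4]{Sha}: show directly that $B\neq0$ produces a divisor $M''$ with $h^0(M'')\,h^0(H-M'')>g$, by moving a fixed component $\Gamma$ with $\Gamma\cdot M\leq -1$ from $M$ to $H-M$.

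For (2), the key input is Lemma \ref{l wMukai minuscule}. If $E$ is empty the statement is trivial, so assume $E$ is a $(-2)$-curve. Then $M\cdot E\in\{-1,0,1\}$. I would first show that $M\cdot E=0$. Since $|M|$ is base point free, $M\cdot E\geq 0$, and likewise $(H-M)\cdot E=-M\cdot E\geq0$ by applying (1) to $H-M$ via Lemma \ref{l wMukai div SD}. Hence $M\cdot E=0$, so $M=\mu^*\ol M$ for a Cartier divisor $\ol M=\mu_*M$ on $\ol S$ (an $A_1$ point with $M\cdot E=0$ descends).

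Then the conditions of Definition \ref{d Mukai div} transfer. The intersection number is preserved: $\ol M\cdot \ol H = M\cdot H$, where $H=\mu^*\ol H$. The cohomology is preserved because $R\mu_*\MO_S=\MO_{\ol S}$, so the projection formula gives $H^i(\ol S,\ol M)=H^i(S,M)$. Finally, $|\ol M|$ is base point free because $|M|=\mu^*|\ol M|$ is base point free.
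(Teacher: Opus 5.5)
\paragraph{The gap is in part (1), in the configuration $M\cdot E=-1$.}
Your deduction of (2) from (1) is valid: if $M$ and $H-M$ are both base point free, then $M\cdot E\ge 0$ and $-M\cdot E=(H-M)\cdot E\ge 0$, so $M$ descends. It is also cleaner than the paper's Steps 2--5. The problem is (1), and it is not a matter of controlling signs.

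If you complete your computation, you get $h^1(M')=\chi(M)-\chi(M')=M'\cdot B+\tfrac12B^2$, hence $\chi(H-M')-s=H\cdot B-h^1(M')$. Since $\chi(H-M')\le h^0(H-M')=s$, this gives $H\cdot B\le h^1(M')$.
\begin{itemize}
\item In the big case, $h^1(M')=0$ forces $B=kE$. Then $M'\cdot E=k$ and $M\cdot E=-k$, and Lemma \ref{l wMukai minuscule} gives $k=1$.
\item In the elliptic case, BN-generality applied to $F$ gives $F\cdot H\ge\frac{rs}{2}+1$. This forces $r=2$ and $H\cdot B=0$, and again $B=E$.
\end{itemize}
So your method proves base point freeness whenever $M\cdot E\ge 0$. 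That is more elementary than the paper's Step 3, which needs BN-generality of $C\in|H|$.

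It leaves exactly one configuration: $M=M'+E$ with $M'\cdot E=1$, i.e.\ $M\cdot E=-1$, which the minuscule lemma allows. Here neither of your routes can work. On one hand $\chi(H-M')=s$. On the other, moving $E$ across gives $h^0(M-E)\,h^0(H-M+E)=rs=g$, because $\MO_E(H-M+E)\simeq\MO_{\P^1}(-1)$. Your argument for (2) inherits this gap, since whenever $M\cdot E=\pm1$ one of $M$, $H-M$ is in this configuration.

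The paper aims a separate mechanism at exactly this case. After replacing $M$ by $H-M$ (Step 1), it shows $\MO_{\ol{S}}(\mu_*M)$ is reflexive and globally generated. It then declares this sheaf invertible by Lemma \ref{l bpf implies Cartier} and concludes $M\cdot E\in 2\Z$ (Steps 2, 4, 5).

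\paragraph{This configuration genuinely occurs, so the statement as written is false.}
Let $S$ be a complex K3 surface with $\Pic S=\Z H\oplus\Z E\oplus\Z M'$, where $H^2=16$, $E^2=-2$, $M'^2=2$, $H\cdot E=0$, $H\cdot M'=8$ and $E\cdot M'=1$. This is an even lattice of signature $(1,2)$, so it is a Picard lattice by Morrison's theorem. After acting by the Weyl group and, if needed, replacing $(E,M')$ by $(-E,M'+E)$, you may assume $H$ is nef and $E$ is effective.
\begin{itemize}
\item All $H$-degrees lie in $8\Z$, and the only roots in $H^\perp$ are $\pm E$, so $\bE(H)=E$.
\item $M'$ is nef and base point free with $h^0(M')=3$.
\item $(S,H)$ is BN-general of genus $9$. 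Indeed, $h^0(D),h^0(H-D)\ge 2$ forces $D\cdot H=8$. The Hodge index theorem then bounds the squares of the mobile parts of $D$ and $H-D$ by $2$, so $h^0(D),h^0(H-D)\le 3$.
\end{itemize}
Now $M:=M'+E$ is a weakly special Mukai divisor of type $(3,3)$: $h^i(M)=h^i(M')$ because $\MO_E(M)\simeq\MO_{\P^1}(-1)$. But $E\subset\Bs|M|$, which violates (1). Likewise $M'$, with $M'\cdot E=1$, violates (2).

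\paragraph{Where the paper's proof breaks.}
The corresponding error is Lemma \ref{l bpf implies Cartier}. A globally generated reflexive rank-one sheaf at an $A_1$ point need not be invertible; $\MO_{\P(1,1,2)}(1)$ is an example. In the surface above, $N:=H-M$ is base point free with $N\cdot E=1$. So $\MO_{\ol{S}}(\mu_*N)$ is globally generated (Steps 2 and 4 apply verbatim), yet $\mu_*N$ is not Cartier. A correct version of the statement has to allow $M\cdot E=\pm1$ and work with the reflexive sheaf $\mu_*\MO_S(M)$ on $\ol{S}$ rather than exclude this case.
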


\begin{proof}
In what follows, we only treat the case when $E$ is a prime divisor, as otherwise the proof is simpler. 

\setcounter{step}{0}

\begin{step}\label{s1 weakly is not weakly}
In order to prove the assertion of Proposition \ref{p weakly is not weakly}, we may assume that $M \cdot E \geq 0$. 
\end{step}

\begin{proof}[Proof of Step \ref{s1 weakly is not weakly}]
Assume that $M \cdot E \leq 0$. 
For $M' := H-M$, we get  $M' \cdot E = (H-M ) \cdot E = -M \cdot E \geq 0$. 
As we are assuming that the assertion holds for the case when $M \cdot E \geq 0$, 
we get $M' = \mu^*\mu_*M'$, which implies that $M' \cdot E  =0$. 
We then get $M \cdot E = -M' \cdot E =0$. 
This completes the proof of Step \ref{s1 weakly is not weakly}.
\end{proof}

In what follows, we assume $M \cdot E \geq 0$.

\begin{step}\label{s2 weakly is not weakly}
It holds that $R^{>0}\mu_*\MO_S(M)= R^{>0}\mu_*\MO_S(-M)=0$ and $\mu_*\MO_S(M) = \MO_{\ol{S}}(\mu_*M)$. 
\end{step}

\begin{proof}[Proof of Step \ref{s2 weakly is not weakly}]
By Lemma \ref{l wMukai minuscule}, we have $M \cdot E \in \{-1, 0, 1\}$. 
Then it holds that  $(\pm M - (K_S +\Delta)) \cdot E \geq -1 - \frac{1}{2} (-2) =0$ 
for $\Delta :=  \frac{1}{2}E$. 
Hence we get $R^{>0}\mu_*\MO_S(M)= R^{>0}\mu_*\MO_S(-M)=0$ 
by the birational Kawamata-Viehweg vanishing theorem for surfaces \cite[Theorem 10.4]{Kol13}. 
We then obtain  
\[
R\cHom (\mu_*\MO_S(M), \MO_{\ol{S}}) 
\simeq R\mu_* R\cHom(\MO_S(M), \mu^!\MO_{\ol{S}}) 
\]
\[
\simeq R\mu_* R\cHom(\MO_S(M), \MO_S) \simeq R\mu_*\MO_S(-M) \simeq \mu_*\MO_S(-M).  
\]
Thus $\mu_*\MO_S(M)$ is maximal Cohen-Macaulay. 
Then it is isomorphic to its double dual, which is nothing but $\MO_S(\mu_*M)$. 
This completes the proof of Step \ref{s2 weakly is not weakly}.
\end{proof}

\begin{step}\label{s3 weakly is not weakly}
$|M|$ is base point free. 
\end{step}

\begin{proof}[Proof of Step \ref{s3 weakly is not weakly}]
By $H^{<2}(\MO_S(M-H)) \simeq H^{>0}(\MO_S(H-M))^\vee=0$ (Lemma \ref{l wMukai div SD}), the exact sequence 
\[
0 \to \MO_S(M -H) \to \MO_S(M) \to \MO_C(M|_C) \to 0. 
\]
induces  $H^0(\MO_S(M)) \xrightarrow{\simeq} H^0(\MO_C(M))$. 
Similarly, we get $H^0(\MO_S(H-M)) \xrightarrow{\simeq} H^0(\MO_C(K_C-M))$. 
Since $C$ is BN-general \cite[Theorem 1.5]{Sha}, the equality 
\[
h^0(\MO_C(M)) h^0(\MO_C(K_C-M)) =h^0(\MO_S(M))h^0(\MO_S(H-M)) = rs =g, 
\]
together with \cite[the proof of Lemma 2.4]{Sha}, implies that $|M|_C|$ is base point free. 
Therefore, 
$|M|$ is base point free around $C$. 
As $\mu_*(C)$ is ample, 
 a curve $\Gamma$ contained in the 
the base locus $\Bs|M|$ must be $E$. 
This, together with $M \cdot E \geq 0$, implie that $M$ is nef.

If $|M|$ is not base point free, 
then \cite[Theorem 3.16(2)(5)]{FanoI} implies that 
$\Bs |M| = E$ and $M \cdot E = g-2 \geq 2$. 
This contradicts $M \cdot E \in \{ -1, 0, 1\}$. 
Therefore, $|M|$ is base point free. 
This completes the proof of Step \ref{s3 weakly is not weakly}.
\end{proof}

\begin{step}\label{s4 weakly is not weakly}
$\ol{M}$ is Cartier and $|\ol{M}|$ is base point free. 
\end{step}

\begin{proof}[Proof of Step \ref{s4 weakly is not weakly}]
By Step \ref{s3 weakly is not weakly}, 
we can find two members $M_1, M_2 \in |M|$ satisfying $M_1 \cap M_2 \cap E =\emptyset$. 
Then the Koszul sequence 
\[
0 \to \MO_S(-M) \to \MO_S \oplus \MO_S \to \MO_S(M) \to 0 
\]
induced by $M_1$ and $M_2$ is exact around $E$. 
By $R^1\mu_*\MO_S(-M)=0$ (Step \ref{s2 weakly is not weakly}), 
the induced sequence 
\[
0 \to \mu_*\MO_S(-M) \to \MO_{\ol{S}} \oplus \MO_{\ol{S}} 
\to \mu_*\MO_{S}(M) \to 0 
\]
is exact around the singular point $\mu(E)$. 
Then  the sheaf $\MO_{\ol{S}}(\mu_*M)$, 
which is equal to  $\mu_*\MO_S(M)$  (Step \ref{s2 weakly is not weakly}), 
is  globally generated  around the singular point $\mu(E)$. 
As $|M|$ is base point free (Step \ref{s3 weakly is not weakly}), 
$\MO_S(\mu_*M)$ is globally generated outside the exceptional locus  $\mu(E)$. 
Therefore, $\MO_{\ol{S}}(\ol{M})$ is globally generated. 
By Lemma \ref{l bpf implies Cartier} below, $\ol{M}$ is Cartier. 
This completes the proof of Step \ref{s4 weakly is not weakly}.
\end{proof}

\begin{step}\label{s5 weakly is not weakly}
It holds that $M = \mu^*\ol{M}$.     
\end{step}

\begin{proof}[Proof of Step \ref{s5 weakly is not weakly}]
Since $\ol{M}$ is Cartier, we have 
$M = \mu^*\ol{M} + nE$ for some $n  \in \Z$. 
As $M$ is minuscule (Lemma \ref{l wMukai minuscule}), 
the following holds: 
\[
\{ 1, 0, -1\} \ni M \cdot E = 
(\mu^*\ol{M} + n E) \cdot E = 
-2n \in 2\Z. 
\] 
Therefore, we obtain $n=0$, as required. 
This completes the proof of Step \ref{s5 weakly is not weakly}.
\end{proof}

By Step \ref{s3 weakly is not weakly} and Step \ref{s5 weakly is not weakly}, (1) and (2) hold, respectively. 
This completes the proof of Proposition \ref{p weakly is not weakly}. 
\end{proof}

\begin{lem}\label{l bpf implies Cartier}
We work over an algebraically closed field of arbitrary characteristic. 
Let $T$ be a projective normal surface and let $D$ be a Weil divisor such that $\MO_T(D)$ is globally generated. 
Then $D$ is Cartier. 
\end{lem}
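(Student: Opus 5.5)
The claim is local at each point, so the plan is to show that at every point $P\in T$, generated sections already give a local generator of $\MO_T(D)$. Pick $P \in T$. Since $\MO_T(D)$ is globally generated, there is a section $\sigma \in H^0(T, \MO_T(D))$ that does not vanish at $P$. More precisely, its image in the fibre $\MO_T(D)\otimes k(P)$ is nonzero. The section $\sigma$ corresponds to an effective Weil divisor $D' \sim D$ with $D' = \mathrm{div}(\sigma) + D$. We first aim to show $P \notin \Supp D'$. Then $D'$ is Cartier near $P$, being zero there, so $D$ is Cartier near $P$. Since $P$ is arbitrary, $D$ is Cartier.

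The key step is the implication ``$\sigma(P)\neq 0$ in the fibre $\Rightarrow$ $P \notin \Supp D'$''. Write $\mathcal F := \MO_T(D)$, a reflexive rank one sheaf, and let $\sigma$ define the injection $\MO_T \to \mathcal F$, $1 \mapsto \sigma$. By Nakayama, the image of $\sigma$ in $\mathcal F \otimes k(P)$ being nonzero means $\sigma$ is part of a minimal generating set of $\mathcal F_P$. On the other hand, $\mathcal F \simeq \MO_T(D')$, and under this isomorphism $\sigma$ goes to the section $1 \in \MO_T(D')$, with $\MO_T \subset \MO_T(D')$ because $D' \geq 0$.

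It is cleaner to argue directly that $\MO_{T,P} \cdot \sigma = \mathcal F_P$. Consider the cokernel $\mathcal Q := \mathcal F/\MO_T\sigma$. It is supported on $\Supp D'$, which has codimension one. Suppose $P \in \Supp D'$. Then near $P$ the divisor $D'$ is nonzero effective. Take a prime component $\Gamma$ through $P$. At the generic point of $\Gamma$, $T$ is regular, and there $\mathcal F_\Gamma = \mathfrak m_\Gamma^{-a}$ with $a\ge1$, while $\sigma$ corresponds to $1$, which lies in $\mathfrak m_\Gamma^{\,0}$ and so is not a generator. Hence $\sigma$ does not generate $\mathcal F$ at the generic point of $\Gamma$, and thus not at $P$, since generation is an open condition. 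Now, $\sigma$ generating $\mathcal F_P$ is equivalent, by Nakayama, to its image in the fibre being a basis, provided $\mathcal F_P$ is cyclic. But $\mathcal F_P$ may need several generators, so a nonzero image in the fibre does not by itself force $\sigma$ to generate.

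This is the main obstacle, and it is where global generation (not just a single section) must be used. Choose two general sections $\sigma_1,\sigma_2$. Global generation gives that $\mathcal F_P$ is generated by finitely many global sections, and the set of sections vanishing in the fibre at $P$ is a proper subspace. I would instead use the following. Choose $\sigma_1$ with nonzero fibre value at $P$, and then $D'_1 := \mathrm{div}(\sigma_1)+D$. If $P \in D'_1$, use the other sections. Since $\mathcal F$ is globally generated, the linear system $|D|$ of effective Weil divisors has empty base locus, so some member $D'$ of $|D|$ avoids $P$. Indeed, if every $D'\in|D|$ contained $P$, then every global section would lie in the subsheaf $\MO_T(D)$-sections vanishing at $P$. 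Concretely, for any section $\tau$ with $P\in \Supp(\mathrm{div}\tau + D)$, $\tau$ lies in $\mathfrak m_P\mathcal F_P$. To see this, near $P$ write $\mathcal F_P=\{f : \mathrm{div} f + D \ge 0\}$. Then $\tau/\tau'$ for any other $\tau'\in\mathcal F_P$ has poles only along $\mathrm{div}\tau+D$, so $\tau'\,\tau^{-1}$ is not in $\MO_{T,P}$ unless it lies in $\mathfrak m_P$-multiples, which needs checking. Global sections would then lie in $\mathfrak m_P\mathcal F_P$, contradicting generation.

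Once a $D'\in|D|$ with $P\notin\Supp D'$ is found, $\MO_T(D)\simeq\MO_T(D')$ is trivial near $P$, which finishes the proof. Normality is used throughout to identify $\MO_T(D)$ with divisorial functions.
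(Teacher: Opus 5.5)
Your analysis up to the obstacle is correct. For $\sigma\in H^0(T,\MO_T(D))$ one has $P\notin\Supp(D+\operatorname{div}\sigma)$ if and only if $\sigma$ generates $\mathcal{F}_P$. A nonzero image of $\sigma$ in $\mathcal{F}\otimes k(P)$ does not give this when $\mathcal{F}_P$ is not cyclic.

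The gap is the step you use to get around this: that global generation of $\MO_T(D)$ forces the linear system $|D|$ to be base point free. Equivalently, you need that a section $\tau$ with $P\in\Supp(\operatorname{div}\tau+D)$ lies in $\mathfrak{m}_P\mathcal{F}_P$. You mark this as ``needs checking''. It is false, and it cannot be repaired, because the statement itself fails.

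Take $T=\mathbb{P}(1,1,2)=\operatorname{Proj} k[u,v,w]$. This is the quadric cone $xz=y^2$ in $\mathbb{P}^3$, whose vertex $P=[0:0:1]$ is an $A_1$ point. Let $D=\{u=0\}$, a ruling through $P$. Then $\MO_T(D)=\MO_T(1)$ and $H^0(T,\MO_T(1))=\langle u,v\rangle$.

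\begin{itemize}
\item On $D_+(u)$ and $D_+(v)$ the sheaf is generated by $u$, resp.\ $v$.
\item On $D_+(w)$, every monomial of odd degree is divisible by $u$ or $v$. So $M:=(k[u,v,w^{\pm 1}])_1=uR+vR$ with $R=k[u^2/w,uv/w,v^2/w]$.
\end{itemize}

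Hence $\MO_T(D)$ is globally generated. However, $\MO_T(1)\otimes k(P)=M/\mathfrak{n}M$ with $\mathfrak{n}=(u^2/w,uv/w,v^2/w)$, and $\mathfrak{n}M$ lives in total $(u,v)$-degree at least $3$. So $u$ and $v$ remain linearly independent in the fibre. The stalk therefore needs two generators, and $D$ is not Cartier. Correspondingly, every member $\{au+bv=0\}$ of $|D|$ passes through $P$, even though $u\notin\mathfrak{m}_P\MO_T(1)_P$. Nothing here depends on the characteristic.

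The paper's one-line proof has exactly the same hole. It asserts that some $s$ has $D+\operatorname{div}(s)$ disjoint from $\operatorname{Sing}(T)$. This conflates non-vanishing of $s$ in the fibre with $s$ generating the stalk, and in the example no such $s$ exists.

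What is true is the lemma under a stronger hypothesis: $|D|$ is base point free as a linear system of Weil divisors, i.e.\ at every point the stalk of $\MO_T(D)$ is generated by a single global section. Your final paragraph, and the paper's argument, do prove this version.

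Note also that Step \ref{s4 weakly is not weakly} of Proposition \ref{p weakly is not weakly} only produces two local generators at an $A_1$ point. That is precisely the local picture of the example: the push-forward of a fibre of $\mathbb{F}_2$ to the quadric cone. So the Cartier conclusion there needs a separate argument.
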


\begin{proof}
There exists a section $s \in H^0(T, \MO_T(D))$ 
whose zero locus $Z(t)$ is disjoint from the singular locus of $T$. 
In other words, the Weil divisor 
\[
D' := D + {\rm div}(s)
\]
does not contain any singular points of $T$. 
Then $D'$ is Cartier, and hence so is $D$. 
\end{proof}

\begin{cor}\label{c exist Mukai div up to resol}
We work over an algebraically closed field of arbitrary characteristic. 
Fix integers $r, s \geq 2$ and let $(S, H)$ be a BN-general 
quasi-polarised smooth K3 surface of genus $g = r \cdot s$ 
such that $E := \bE(H)$ is a prime divisor. 
Let $\mu : S \to \ol{S}$ be the birational contraction of $E$ 
and let $\ol{H}$ be an ample Cartier divisor on $\ol{S}$ satisfying 
$\mu^*\ol{H} \sim H$. 
Then 
$(S, H)$ has a special Mukai divisor  of type $(r, s)$ if and only if 
$(\ol{S}, \ol{H})$ has a special Mukai divisor  of type $(r, s)$.  
\end{cor}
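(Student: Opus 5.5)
The corollary has two directions; the plan is to move special Mukai divisors back and forth along $\mu$, using Proposition \ref{p weakly is not weakly} for one direction and a direct cohomology comparison for the other. Throughout I use that $\ol{S}$ is a canonical K3 surface (the contraction of a $(-2)$-curve $E$ produces an $A_1$ point), that $(\ol{S}, \ol{H})$ is quasi-polarised of the same genus $g$, and that $R\mu_*\MO_S = \MO_{\ol{S}}$ because $\ol{S}$ has rational singularities.

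\emph{($\Rightarrow$)} Let $M$ be a special Mukai divisor of type $(r,s)$ on $(S,H)$. It is in particular weakly special. The hypotheses of Proposition \ref{p weakly is not weakly} hold: $(S,H)$ is BN-general, and $E=\bE(H)$ is a prime divisor. Hence Proposition \ref{p weakly is not weakly}(2) directly gives that $\ol{M} := \mu_*M$ is a special Mukai divisor of type $(r,s)$ on $(\ol{S}, \ol{H})$. This direction is formal.

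\emph{($\Leftarrow$)} Let $\ol{M}$ be a special Mukai divisor of type $(r,s)$ on $(\ol{S}, \ol{H})$, and set $M := \mu^*\ol{M}$, which is Cartier. I check the conditions of Definition \ref{d Mukai div} in turn.
\begin{enumerate}
\item By the projection formula, $M\cdot H = \mu^*\ol{M}\cdot \mu^*\ol{H} = \ol{M}\cdot\ol{H} = (r-1)(s+1)$.
\item Also by the projection formula, $R\mu_*\MO_S(M) \simeq \MO_{\ol{S}}(\ol{M}) \otimes R\mu_*\MO_S = \MO_{\ol{S}}(\ol{M})$. Therefore $H^j(S, \MO_S(M)) \simeq H^j(\ol{S}, \MO_{\ol{S}}(\ol{M}))$ for all $j$. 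This gives $h^0(M) = r$ and $H^1(M) = H^2(M) = 0$.
\item Base point freeness of $|M|$ follows because $|\ol{M}|$ is base point free and the pullback of a base point free linear system is base point free, with $H^0(M) = H^0(\ol{M})$.
\end{enumerate}
So $M$ is special on $(S,H)$. Alternatively, once $M$ is known to be weakly special, Proposition \ref{p weakly is not weakly}(1) also gives base point freeness.

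The only step needing care is the vanishing $R^1\mu_*\MO_S = 0$, i.e.\ rationality of the $A_1$ singularity, in arbitrary characteristic. It is standard: RDPs are rational in every characteristic. It also follows from the birational Kawamata--Viehweg vanishing used in Step \ref{s2 weakly is not weakly} of the proof of Proposition \ref{p weakly is not weakly}, applied with $\Delta = \frac12 E$, since $(0 - (K_S + \Delta))\cdot E = 1 \geq 0$. I would cite that argument. Beyond this, I expect no real obstacle.
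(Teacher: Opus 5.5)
Your proof is correct and follows the same route as the paper. The pullback $\mu^*\ol{M}$ gives the direction from $(\ol{S},\ol{H})$ to $(S,H)$; the paper only asserts this, and your projection-formula argument using $R\mu_*\MO_S=\MO_{\ol{S}}$ supplies the omitted details. Proposition \ref{p weakly is not weakly}(2), applied to the push-forward $\mu_*M$, gives the converse direction.
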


\begin{proof}
If $N$ is a special Mukai divisor on $(\ol{S}, \ol{H})$ of type $(r, s)$, 
then its pullback $\mu^*N$ is a special Mukai divisor  on $(S, H)$ of type $(r, s)$. 
Conversely, if $M$ is  a special Mukai divisor on $S$ of type $(r, s)$, 
then its push-forward $\mu_*M$ is a special Mukai divisor on $\ol{S}$ of type $(r, s)$ (Proposition \ref{p weakly is not weakly}(2)). 
\end{proof}

\subsection{Existence of K3 surfaces having special Mukai divisors}

\begin{lem}\label{l Mukai div limit}
Let $(R, \m)$ be a discrete valuation ring of mixed characteristic 
such that $\kappa := R/\m$ is algebraically closed. 
Fix integers $r, s \geq 2$. 
Let $\pi : Y \to \Spec R$ be a flat projective morphism, 
where $Y$ is an integral normal scheme. 
Let $H$ be a $\pi$-nef and $\pi$-big Cartier divisor on $Y$. 
Assume that the following hold: 
\begin{enumerate}
\item $Y$ is factorial. 
\item 
For every geometric point $t \to \Spec R$, 
$(Y_t, H|_{Y_t})$ is a  BN-general canonical quasi-polarised 
K3 surface such that either 
\begin{itemize}
\item $Y_t$ is smooth, or 
\item $Y_t$ has a unique singular point and it is of type $A_1$. 
\end{itemize}
\item There exists a Cartier divisor $M$ on the generic fibre $Y_\eta$ 
whose base change $M_{\ol{\eta}}$ 
is a  weakly special Mukai divisor of type $(r, s)$ on 
the geometric generic fibre $(Y_{\ol{\eta}}, H_{\ol{\eta}})$. 
\end{enumerate}
Then the fibre $(Y_{\kappa}, H_{\kappa})$ over $\kappa$ has a weakly special Mukai divisor of type $(r, s)$. 
\end{lem}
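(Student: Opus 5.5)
We follow the specialisation argument sketched in the introduction. Take the Zariski closure $\cM$ in $Y$ of the Weil divisor $M$ on $Y_\eta$; since $Y$ is factorial by (1), $\cM$ is Cartier. Set $M_\kappa := \cM|_{Y_\kappa}$. This is a Cartier divisor on $Y_\kappa$, and $\cM|_{Y_\eta} = M$. We will show that $M_\kappa$ is a weakly special Mukai divisor of type $(r, s)$ on $(Y_\kappa, H_\kappa)$, checking the three conditions of Definition \ref{d Mukai div} in turn.

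For condition (1), intersection numbers are constant in flat projective families, because Euler characteristics of $\MO(a\cM + bH)$ on the fibres are constant. Hence $M_\kappa \cdot H_\kappa = M_{\ol{\eta}} \cdot H_{\ol{\eta}} = (r-1)(s+1)$. The same invariance gives $M_\kappa^2 = M_{\ol\eta}^2$ and $H_\kappa^2 = 2rs-2$, and by Riemann--Roch on canonical K3 surfaces we get $\chi(\MO_{Y_\kappa}(M_\kappa)) = \chi(\MO_{Y_{\ol\eta}}(M_{\ol\eta})) = r$.

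For condition (2), upper semicontinuity of $h^0$ for the flat coherent sheaves $\MO_Y(\cM)$ and $\MO_Y(H-\cM)$ gives $h^0(M_\kappa) \geq r$. For $H-\cM$, the closed fibre satisfies $h^0(H_\kappa - M_\kappa) \geq h^0(H_{\ol\eta}-M_{\ol\eta})$. By Lemma \ref{l wMukai div SD} applied on the BN-general geometric generic fibre, the latter equals $s$. Note that $M_\kappa \not\sim 0$ and $M_\kappa \not\sim H_\kappa$, since $0 < M_\kappa\cdot H_\kappa < H_\kappa^2$. BN-generality of $(Y_\kappa, H_\kappa)$ from (2) then gives
\[
rs \leq h^0(M_\kappa)\, h^0(H_\kappa - M_\kappa) \leq g = rs,
\]
so $h^0(M_\kappa) = r$ and $h^0(H_\kappa - M_\kappa) = s$.

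For condition (3), Serre duality on the Gorenstein surface $Y_\kappa$ with $K \sim 0$ gives $h^2(M_\kappa) = h^0(-M_\kappa)$. This vanishes because $-M_\kappa \cdot H_\kappa < 0$ and $H_\kappa$ is nef and big; a nonzero effective divisor could have intersection $0$ with $H_\kappa$ but never a negative one. Then $h^1(M_\kappa) = h^0 + h^2 - \chi = r + 0 - r = 0$.

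Main obstacle. The delicate point is the first step: $\cM$ restricted to $Y_\kappa$ must be a well-defined Cartier divisor, and the invariance of $\chi$ and the semicontinuity must apply to it. Factoriality of $Y$ handles this: $\MO_Y(\cM)$ is then invertible, hence flat over $R$, and its restriction is a line bundle. One should also check that the intersection numbers on the possibly singular fibre (with at most an $A_1$ point) are computed via $\chi$ of Cartier divisors, so the constancy argument is valid.
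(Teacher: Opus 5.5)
Your proposal is correct and follows essentially the same route as the paper. The closure of $M$ is Cartier by factoriality, and condition (1) and $\chi(M_\kappa)=r$ come from flat invariance. Upper semicontinuity combined with BN-generality of the closed fibre forces $h^0(M_\kappa)=r$; you usefully make explicit two points the paper leaves implicit, namely that $h^0(H_{\ol{\eta}}-M_{\ol{\eta}})=s$ comes from Lemma \ref{l wMukai div SD} and that $M_\kappa\not\sim 0, H_\kappa$. Finally, $h^2(M_\kappa)=0$ follows from $M_\kappa\cdot H_\kappa>0$, which then forces $h^1(M_\kappa)=0$.
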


\begin{proof}
Let $M_Y$ be the closure of $M$, which is a Cartier divisor on $Y$ by (1). 
Set $M_\kappa := (M_Y)|_{Y_\kappa}$. 
It suffices to show that $M_\kappa$ satisfies the conditions (1)--(3) of 
Definition \ref{d Mukai div}. 
Definition \ref{d Mukai div}(1) holds for $M_\kappa$ 
by the invariance of intersection numbers under flat families. 
The upper semi-continuity \cite[Chapter III, Theorem 12.8]{Har77} implies that  
\[
h^0(\MO_{Y_\kappa}(M_\kappa)) \geq 
h^0(\MO_{Y_{\ol{\eta}}}(M_{\ol{\eta}})) =r, \quad 
h^0(\MO_{Y_\kappa}(H|_{Y_\kappa}- M_\kappa)) \geq 
h^0(\MO_{Y_{\ol{\eta}}}(H|_{Y_{\ol{\eta}}}- M_{\ol{\eta}})) =s
\]
If $h^0(\MO_{Y_\kappa}(M_\kappa)) >r$, 
then we would get 
$h^0(\MO_{Y_\kappa}(M_\kappa))h^0(\MO_{Y_\kappa}(H|_{Y_\kappa}- M_\kappa))> rs =g$, contradicting BN-generality. 
Hence Definition \ref{d Mukai div}(2) holds for $M_\kappa$. 
By $M_\kappa \cdot H_{\kappa} =(r-1)(s+1)>0$, we get $H^2(\MO_{Y_\kappa}(M_{\kappa}))=0$. 
Since $\chi(-)$ is invariant under flat families \cite[Chapter III, Theorem 9.9]{Har77}, we get 
\[
r = \chi(\MO_{Y_{\ol{\eta}}}(M_{\ol{\eta}})) = \chi(\MO_{Y_\kappa}(M_{Y_\kappa})) 
= r -h^1(\MO_{Y_\kappa}(M_{Y_\kappa})). 
\]
Therefore, Definition \ref{d Mukai div}(3) holds for $M_\kappa$. 
\end{proof}

\begin{thm}\label{t special Mukai div}
Let $X$ be a prime Fano threefold of genus $g \geq 8$. 
Fix integers $r$ and $s$ satisfying $g = rs$ and $s \geq r\geq 2$. 
Then there exist a canonical K3 surface $S \in |-K_X|$ and 
a special Mukai divisor $M$ on 
the polarised canonical K3 surface $(S, -K_X|_S)$ of type $(r, s)$ 
such that either $S$ is smooth, or $S$ has a unique singular point and it is of type $A_1$. 
\end{thm}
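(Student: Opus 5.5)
We follow the strategy sketched in the introduction: lift $X$ to characteristic zero, use the existence of special Mukai divisors there, and specialise back using Lemma \ref{l Mukai div limit}.

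\textbf{Lift and choose a pencil.} By \cite[Theorem A]{KTLift1}, there is a $W(k)$-lift $\cX$ of $X$; after a finite extension $R$ of $W(k)$ we may assume $-K_{\cX}$ is relatively very ample. The geometric generic fibre $X_{\ol{Q}}$ is then a prime Fano threefold of genus $g$ in characteristic zero. We take a Lefschetz pencil $\Lambda$ in $|-K_X|$ whose members are either smooth or have exactly one $A_1$ singularity, with smooth base curve $C$, and lift it to a pencil $\Lambda_R$ on $\cX$. By Proposition \ref{p general lift}, and after enlarging $R$, we arrange the following. A very general point of $\Lambda$ lifts to a very general point of $\Lambda_Q$. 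Moreover, in characteristic zero the corresponding member $S_{\ol{Q}}$ satisfies the conclusion of \cite[Proposition 5.5]{BKM24}: its minimal resolution carries a special Mukai divisor of type $(r,s)$. By Corollary \ref{c exist Mukai div up to resol}, $S_{\ol{Q}}$ itself then carries one.

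It may be cleaner to pick instead a point of $\Lambda_{\ol{Q}}$ where BKM's divisor exists. Such points are dense, being a countable intersection of nonempty opens or a specific locus. We then specialise that $\ol{Q}$-point to a $k$-point of $\Lambda$ over a suitable $R$. Either way, the output is a family $\pi\colon\cS\to\Spec R$ of members of $|-K_{\cX}|$. Its fibres are canonical K3 surfaces that are smooth or have a unique $A_1$ point. After a further finite extension of $R$, a weakly special Mukai divisor on $S_{\ol{Q}}$ descends to a Cartier divisor $M_Q$ on $S_Q$.

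\textbf{Factorial model and specialisation.} Next we apply Theorem \ref{t simul resol} to $\cS$, which is normal because its fibres are normal and it is flat over a DVR. This gives a factorial $\cS'\to\cS$ whose geometric fibres are either isomorphic to those of $\cS$ or are their minimal resolutions. We put $H:=$ the pullback of $-K_{\cX}$, which is $\pi$-nef and $\pi$-big. On fibres that are not resolved, BN-generality is Theorem \ref{t main2}(1) (base-changed to $\ol{Q}$ via \cite{BKM24} in characteristic zero). On resolved fibres, BN-generality is inherited, since $h^0$ of divisors on the minimal resolution agrees with $h^0$ on the contraction after pushing forward. The hypothesis of Lemma \ref{l Mukai div limit}(2) is the fibre condition on the resolved model $\cS'$; I expect this step to need care. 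Its fibres are smooth, or are the original $A_1$ surface. In the resolved case the exceptional curve $E$ is a single $(-2)$-curve, which matches the hypothesis of Proposition \ref{p weakly is not weakly}. Pulling $M_Q$ back to $\cS'_Q$ keeps it weakly special, since it is a pullback. Lemma \ref{l Mukai div limit} then produces a weakly special Mukai divisor on $\cS'_k$.

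\textbf{Conclusion and main obstacle.} Proposition \ref{p weakly is not weakly} shows this divisor is special. If $\cS'_k\to S$ is the minimal resolution, the same proposition pushes it forward to a special Mukai divisor on $S\in|-K_X|$, which is smooth or has one $A_1$ point. The main obstacle is the first step: producing, on a lift, a member over $Q$ carrying a Mukai divisor whose specialisation is a member of the required type (smooth or a single $A_1$) inside a Lefschetz-type family. This amounts to controlling the pencil simultaneously in both characteristics. I would try to handle it by choosing the pencil very generally, so that its lift is a Lefschetz pencil on $X_{\ol{Q}}$ and all members on both sides have at worst one $A_1$ point. The BKM member is then obtained as a very general member of the lifted pencil.
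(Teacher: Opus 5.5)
Your skeleton is the paper's: lift by \cite[Theorem A]{KTLift1}, take a member over $\ol{Q}$ carrying the divisor of \cite[Proposition 5.5]{BKM24}, specialise, pass to the factorial model of Theorem \ref{t simul resol}, and finish with Corollary \ref{c exist Mukai div up to resol}, Lemma \ref{l Mukai div limit} and Proposition \ref{p weakly is not weakly}. However, the mechanism you lead with, and return to in your final sentence, is false: the BKM member is never a (very) general member of the pencil. In characteristic zero, a very general member $S_{\ol{Q}}$ of a Lefschetz pencil in $|-K_{X_{\ol{Q}}}|$ has $\Pic S_{\ol{Q}} = \Z H$ (Noether--Lefschetz, via irreducibility of monodromy on the vanishing cohomology). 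Since $0<(r-1)(s+1)<2g-2=H^2$, no divisor $M$ with $M\cdot H=(r-1)(s+1)$ exists on it. The members produced by \cite[Proposition 5.5]{BKM24} are Noether--Lefschetz special points of the pencil. They form a countable set, not a countable intersection of nonempty opens, and density is irrelevant anyway. So applying Proposition \ref{p general lift} to points of the pencil is the wrong use of it: a very general $k$-member lifts to a $Q$-member of Picard rank one, where Lemma \ref{l Mukai div limit} has nothing to specialise.

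What works is your fallback, which is exactly the paper's route, provided genericity is imposed on the pencil rather than on the member. After reducing to uncountable $k$, apply Proposition \ref{p general lift} to the Grassmannian of pencils in $|-K_{\cX}|$. This gives $\cV$ with $\cV_k$ and $\cV_{\ol{Q}}$ both very general, hence Lefschetz by \cite[Theorem 3.6]{KTLift1}. Lifting a fixed Lefschetz pencil from characteristic $p$, as in your first paragraph, does not obviously make $\cV_{\ol{Q}}$ Lefschetz. You need that both for \cite[Proposition 5.5]{BKM24} and for the generic fibre in Theorem \ref{t simul resol}. Then take the specific $\ol{Q}$-member where the BKM divisor lives, enlarge $R$ so that it and the divisor are defined over $Q$, and extend to an $R$-point of $\cV \simeq \P^1_R$ by the valuative criterion. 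The closed fibre is some member of the Lefschetz pencil $\cV_k$, hence smooth or with one $A_1$ point, whichever member it is.

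Finally, your check of hypothesis (2) of Lemma \ref{l Mukai div limit} is not right as written (the paper leaves this check implicit). Theorem \ref{t main2}(1) only covers smooth members, an unresolved closed fibre may keep its $A_1$ point, and push-forwards of Cartier divisors from the resolution need not be Cartier. Instead, take a smooth $C=S\cap S'\in|H|$ missing the singular point. Write $C=S'\cap S''$ with $S''$ a smooth member of the pencil spanned by $S$ and $S'$; then Theorem \ref{t main2}(2) makes $C$ BN-general. Applying \cite[Theorem 2.10(1)]{BKM24} on the minimal resolution yields BN-generality of the resolution and hence of $(S,H)$.
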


\begin{proof}
By standard argument, the problem is reduced to the case when $k$ is uncountable. 
Let a $R$ be a finite extension of $W(k)$. 
In what follows, we will replace $R$ by a further finite extension of $R$. 
Set $Q :=K(R)$. 
We have a lift $\mathcal X \subset \P^{g+1}_R$ of $X \subset \P^{g+1}_k$ over $R$ \cite[Theorem A]{KTLift1}. 
Then there exists a (family of) pencils 
$\cV \subset \P(H^0(\cX, -K_{\cX}))$ such that 
$V := \cV_k \subset \P(H^0(X, -K_X))$ and 
$V_{\ol{Q}} := \cV_{\ol{Q}} \subset \P(H^0(X_{\ol{Q}}, -K_{X_{\ol{Q}}}))$ 
are very general pencils (Proposition \ref{p general lift}). 
In particular, each of $V$ and $V_{\ol{Q}}$ is a Lefschetz pencil 
\cite[Theorem 3.6]{KTLift1}. 

Replacing $R$ by a finite extension if necessary, 
we can find a member $Z_Q$ of $V_Q$ (i.e., a $Q$-rational point of $V_Q$) 
and a Cartier divisor $M_{Q}$ on $Z_Q$ such that 
the pullback $M_{\ol{Q}}$ on the  base change 
$(Z_{\ol{Q}}, -K_{X_{\ol{Q}}}|_{Z_{\ol{Q}}})$  
is a special Mukai divisor of type $(r, s)$ \cite[Proposition 5.5]{BKM24}. 
Let $\pi : Z \to \Spec R$ be the flat projective morphism obtained by the specialisation. 
Then there exists a projective birational morphism $\varphi : Z' \to Z$ from an integral normal scheme $Z'$ as in Theorem \ref{t simul resol}.


Since 
 $Z_{\ol{Q}}$ has 
a weakly special Mukai divisor of type $(r, s)$, so does $Z'_{\ol{Q}}$ by Corollary \ref{c exist Mukai div up to resol}. 
Then 
Lemma \ref{l Mukai div limit} implies that 
$Z'_k$ has a weakly special Mukai divisor of type $(r, s)$, and hence so does $S := Z_k$ again by 
Corollary \ref{c exist Mukai div up to resol}. 
\qedhere

\end{proof}

\begin{rem}
Our proof of Theorem \ref{t special Mukai div} might suggest an  alternative way to prove 
the corresponding statement in characteristic zero. 
For example, assume that there exists a family $\cX \to U$ of prime Fano theefolds of fixed genus 
such that $U$ is irreducible and  the conclusion of Theorem \ref{t special Mukai div} holds for general fibres. 
Then the proof of Theorem \ref{t special Mukai div} might imply the conclusion for all fibres. 
\end{rem}

\section{Mukai bundles on Fano threefolds}\label{s Mukai bdl X}

\subsection{Existence of Mukai bundles}

\begin{dfn}\label{d Mukai bdl X}
Let $X$ be a prime Fano threefold $X$ of genus $g$. 
Take integers $r$ and $s$ satisfying $g=rs$, $r \geq 2$ and $s \geq 2$. 
A {\em Mukai bundle} $\cU_X$ on $X$ of type $(r,s)$ is 
a vector bundle satisfying the following properties: 
\begin{enumerate}
\item 
$\rank(\cU_X) = r$ and $c_1(\cU_X) = K_X$. 
\item 
$H^i(X,\cU_X) = 0$ for every $i \geq 0$.
\item 
$\cU_X^\vee$ is globally generated. 
\item 
$\dim H^0(X, \cU_X^\vee)=r+s$ and $H^{>0}(X, \cU_X^\vee) = 0$.
\end{enumerate}
For a Mukai bundle $\cU_X$ on $X$ of type $(r, s)$, 
we define $\U_X^\perp$ by the following exact sequence: 
\[
0 \to \U_X^\perp \to H^0(X, \cU_X^\vee) \otimes \MO_X 
\to \cU_X^\vee \to 0, 
\]
where 
$H^0(X, \cU_X^\vee) \otimes \MO_X 
\to \cU_X^\vee$ is the induced restriction homomorphism. 
\end{dfn}

\begin{lem}\label{l perp is Mukai}
Let $X$ be a prime Fano threefold $X$ of genus $g$. 
Take integers $r$ and $s$ satisfying $g=rs$, $r \geq 2$, and $s \geq 2$. 
Let $\cU_X$ be a Mukai bundle on $X$ of type $(r,s)$. 
Then $\cU_X^\perp$ is a Mukai bundle on $X$ of type $(s, r)$. 
\end{lem}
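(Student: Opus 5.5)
We check the four conditions of Definition \ref{d Mukai bdl X} for $\cU_X^\perp$ with the roles of $r$ and $s$ exchanged, working from the defining sequence
\[
0 \to \cU_X^\perp \to H^0(X, \cU_X^\vee) \otimes \MO_X \to \cU_X^\vee \to 0. \qquad (\dagger)
\]

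\emph{Step 1: local freeness, rank and $c_1$.} The map in $(\dagger)$ is surjective because $\cU_X^\vee$ is globally generated. A surjection of vector bundles has locally free kernel, so $\cU_X^\perp$ is a vector bundle. Its rank is $(r+s)-r = s$, and $c_1(\cU_X^\perp) = -c_1(\cU_X^\vee) = c_1(\cU_X) = K_X$. This gives condition (1).

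\emph{Step 2: cohomology vanishing.} Take the long exact cohomology sequence of $(\dagger)$. By construction, $H^0(H^0(\cU_X^\vee)\otimes\MO_X) \to H^0(\cU_X^\vee)$ is the identity, which is an isomorphism. We have $H^i(\MO_X)=0$ for $i>0$, since $X$ is Fano (Kodaira vanishing is known for Fano threefolds in positive characteristic, cf.\ \cite{FanoI}). We also have $H^{>0}(\cU_X^\vee)=0$. Together these give $H^i(\cU_X^\perp)=0$ for every $i$, which is condition (2).

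\emph{Step 3: the dual sequence.} Dualize $(\dagger)$ to get
\[
0 \to \cU_X \to H^0(\cU_X^\vee)^\vee \otimes \MO_X \to (\cU_X^\perp)^\vee \to 0.
\]
Here $(\cU_X^\perp)^\vee$ is a quotient of a trivial bundle, so it is globally generated; this is condition (3). Taking cohomology and using $H^i(\cU_X)=0$ for all $i$ (condition (2) for $\cU_X$), we get $H^0((\cU_X^\perp)^\vee) \simeq H^0(\cU_X^\vee)^\vee$, which has dimension $r+s$. We also get $H^i((\cU_X^\perp)^\vee) \simeq H^i(\MO_X)^{\oplus(r+s)} = 0$ for $i>0$. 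This is condition (4).

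\emph{Main point to check.} The only input not taken from the hypotheses is $H^{>0}(X,\MO_X)=0$ for a Fano threefold in characteristic $p$, which the paper treats as known. Everything else is formal diagram chasing. We also note that the identification of the evaluation map in the dual sequence is canonical, so the dual of $(\dagger)$ is exactly the sequence defining $(\cU_X^\perp)^\perp$ up to this identification. That identification is not needed for the lemma, however.
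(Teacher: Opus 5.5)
Your proposal is correct and follows the paper's proof essentially step for step. You read off local freeness, rank and $c_1$ from the defining sequence, get $H^\bullet(X,\cU_X^\perp)=0$ from $H^0(\Ev)$ being an isomorphism together with $H^{>0}(\MO_X)=H^{>0}(\cU_X^\vee)=0$, and obtain conditions (3)--(4) from the dualized sequence using $H^\bullet(X,\cU_X)=0$; you also make explicit the global generation of $(\cU_X^\perp)^\vee$ as a quotient of a trivial bundle, which the paper leaves implicit.
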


\begin{proof}
We have an exact sequence 
\[
0 \to \cU_X^\perp \to H^0(X, \cU_X^\vee) \otimes \MO_X 
\xrightarrow{\Ev} \cU_X^\vee \to 0. 
\]
As $H^0(X, \cU_X^\vee) \otimes \MO_X$ and $\cU_X^\vee$ are vector bundles, so is $\cU_X^\perp$. 
It suffices to check that  (1)-(4) of Definition \ref{d Mukai bdl X} holds for $\U_X^\perp$.

We have $c_1(\cU_X^\perp) = -c_1(\cU_X^\vee) = K_X$ 
and $\rank \cU_X^\perp = h^0(X, \cU_X^\vee)  -\rank \cU_X^\vee = (r+s)-r =s$. 
Thus Definition \ref{d Mukai bdl X}(1) holds  for $\U_X^\perp$. 
Note that $H^0(\Ev)$ is an isomorphism. 
This, together with $H^{>0}(X, \MO_X)= H^{>0}(X, \cU_X^\vee)=0$, implies that $H^i(X, \cU_X^\perp)=0$ 
for every $i \geq 0$. 
Thus Definition \ref{d Mukai bdl X}(2) holds  for $\U_X^\perp$.

Dualising the above exact sequence, we obtain 
\[
0 \to \cU_X \to H^0(X, \cU_X^\vee)^\vee \otimes \MO_X 
\to (\cU_X^\perp)^\vee \to 0. 
\]
As $H^{\bullet}(X, \cU_X)=0$ and $H^{>0}(X, \MO_X)=0$, 
we get $H^{>0}(X, (\cU_X^\perp)^\vee) =0$ and 
$h^0(X, (\cU_X^\perp)^\vee) = \dim H^0(X, \cU_X^\vee)^\vee =s+r$. 
Thus Definition \ref{d Mukai bdl X}(3) and Definition \ref{d Mukai bdl X}(4) hold  for $\U_X^\perp$. 
\end{proof}

\begin{thm}\label{t Mukai bdl X}
Let $X$ be a prime Fano threefold $X$ of genus $g \geq 8$. 
Take integers $r$ and $s$ satisfying $g=rs$, $r \geq 2$, and $s \geq 2$.
Then there exists a Mukai bundle $\cU_X$ on $X$ of type $(r,s)$. 
\end{thm}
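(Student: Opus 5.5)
By Lemma \ref{l perp is Mukai}, a Mukai bundle of type $(r,s)$ yields one of type $(s,r)$ via $\cU_X \mapsto \cU_X^\perp$. It therefore suffices to treat the case $s \geq r \geq 2$. In that case, Theorem \ref{t special Mukai div} provides a canonical K3 surface $S \in |-K_X|$ with at worst one $A_1$ point. On $(S,H)$, $H := -K_X|_S$, it also provides a special Mukai divisor $M$ of type $(r,s)$. I would define $\cU_X := F_{X,S,\MO_S(M)}$ by
\[
0 \to \cU_X \to H^0(S,M)\otimes \MO_X \to i_*\MO_S(M) \to 0,
\]
which makes sense because $|M|$ is base point free. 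The plan is to verify (1)--(4) of Definition \ref{d Mukai bdl X} for this $\cU_X$.

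First, I would check that the construction goes through for a singular canonical $S$. Since $S$ is a Cartier divisor on the smooth threefold $X$, the arguments of Proposition \ref{p ele tf} (after \cite{Fri98}) only use that $i_*\MO_S(M)$ has homological dimension one. Hence $\cU_X$ is a vector bundle of rank $h^0(M)=r$ with $c_1(\cU_X)=-S=K_X$, which gives (1). For (2), I take cohomology of the defining sequence. The map $H^0(X,\cU_X) \to H^0(M)\otimes H^0(\MO_X) \to H^0(S,M)$ is an isomorphism, and $H^{>0}(X,\MO_X)=0$ because $X$ is Fano with $K_X$ anti-ample, using Kawamata--Viehweg or the known vanishing for Fano threefolds in \cite{FanoI}. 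Combined with $H^1(S,M)=H^2(S,M)=0$, this gives $H^j(X,\cU_X)=0$ for all $j$.

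For (3) and (4), I dualize the sequence. As in the proof of Proposition \ref{p ele tf}, Grothendieck duality with $\omega_S \simeq \MO_S$ gives
\[
0 \to H^0(M)^\vee \otimes \MO_X \to \cU_X^\vee \to i_*\MO_S(H-M) \to 0 .
\]
The divisor $H-M$ is a weakly special Mukai divisor of type $(s,r)$ by Lemma \ref{l wMukai div SD}, which is available because $S$ is BN-general. When $S$ is singular, I would transfer BN-generality and this duality through the minimal resolution. Then $h^0(H-M)=s$ and $H^{>0}(H-M)=0$. Taking cohomology and using $H^{>0}(\MO_X)=0$ yields $h^0(\cU_X^\vee)=r+s$ and $H^{>0}(\cU_X^\vee)=0$, which is (4). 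For global generation in (3), two facts suffice: $H^1(\MO_X)=0$ makes $H^0(\cU_X^\vee)\to H^0(H-M)$ surjective, and $|H-M|$ is base point free. The latter follows from Proposition \ref{p weakly is not weakly} applied to $H-M$ on the minimal resolution, pushed down via Corollary \ref{c exist Mukai div up to resol}. Alternatively, Proposition \ref{p ele tf}(6) gives (3) directly, with the same base point freeness input.

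I expect the main obstacle to be the bookkeeping for a singular $S$ with one $A_1$ point. Specifically, one must confirm that BN-generality of $(S,H)$ holds there, since Theorem \ref{t main2} is stated only for smooth $S$. One must also confirm that the duality $\cExt^1(i_*\MO_S(M),\MO_X)\simeq \MO_S(H-M)$ remains valid, which it does because $S$ is Gorenstein. For the first point, I would avoid BN-generality on $S$ entirely. Instead, I would use Lemma \ref{l Mukai div limit} as in the proof of Theorem \ref{t special Mukai div}, where the equality $h^0(H-M)=s$ was already obtained on the factorial model, and pass it to $S$ by Proposition \ref{p weakly is not weakly}(2).
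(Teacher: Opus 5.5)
Your proposal is essentially the paper's proof. You reduce to $s\ge r$ via Lemma \ref{l perp is Mukai}, take $\cU_X:=F_{X,S,M}$ for the pair $(S,M)$ of Theorem \ref{t special Mukai div}, read off (1)--(2) from the defining sequence, and get (3)--(4) from its dual together with Lemma \ref{l wMukai div SD}. Your explicit appeal to Proposition \ref{p weakly is not weakly} for base point freeness of $|H-M|$ fills in a step the paper leaves tacit.

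One correction to your last paragraph. Routing through Lemma \ref{l Mukai div limit} and Proposition \ref{p weakly is not weakly} does not avoid BN-generality of the singular $S$ or its minimal resolution, because both results assume it. That input instead comes from Theorem \ref{t main2}(2), applied to a smooth curve $C\in|H|$ missing the $A_1$ point. Such a $C$ is cut out by a general smooth $S'\in|-K_X|$, and it equals $S_1\cap S'$ for a general smooth member $S_1$ of the pencil spanned by $S$ and $S'$. Restricting to $C$ then bounds $h^0(M)h^0(H-M)$ on both $S$ and its minimal resolution.
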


\begin{proof}
By Lemma \ref{l perp is Mukai}, we may assume $s \geq r \geq 2$. 
Take a member $S \in |-K_X|$ and a special Mukai divisor $M$ on 
a polarised K3 surface 
$(S, -K_X|_S)$ of type $(r, s)$ as in Theorem \ref{t special Mukai div}. 
Set $\cU_X := F_{X, S, M}$ (Definition \ref{d ele tf}), 
which is defined by the following exact sequence: 
\[
0 \to \cU_X \to H^0(S, \MO_S(M)) \otimes \MO_X \xrightarrow{e} \MO_S(M) \to 0. 
\]
Then Definition \ref{d Mukai bdl X}(1) follows from Proposition \ref{p ele tf}. 
Note that we have $H^{>0}(S, M) =0$ and $H^{>0}(X, \MO_X)=0$. 
Since $H^0(e)$ is an isomorphism, we get $H^\bullet (X,\cU_X) = 0$, i.e., 
Definition \ref{d Mukai bdl X}(2) holds. 
Dualising the above exact sequence, we get 
\[
0 \to H^0(S, \MO_S(M))^{\vee} \otimes \MO_X \to \cU_X^{\vee} \to 
\MO_S(H -M) \to 0. 
\]
As $\MO_S(H-M)$ is globally generated, 
so is $\cU_X^{\vee}$ by $H^1(\MO_X)=0$. 
Thus Definition \ref{d Mukai bdl X}(3) holds.  
As $H-M$ is a special Mukai divisor of type $(s, r)$ (Lemmna \ref{l wMukai div SD}), 
we get 
$H^{>0}(S, \MO_S(H-M))=0$ and $h^0(S, \MO_S(H-M))=s$ 
(Definition \ref{d Mukai div}). 
This, together with $H^{>0}(X, \MO_X)=0$ and $h^0(S, \MO_S(M))=r$, 
implies $H^{>0}(X, \cU_X^\vee) =0$ and $h^0(X, \cU_X^\vee)=r+s$. 
Thus  Definition \ref{d Mukai bdl X}(4) holds. 
\end{proof}

\begin{rem}\label{r perpX vs perpS}
Let $X$ be a prime Fano threefold of genus $g \geq 8$. 
Let $\cU_X$ be a Mukai bundle of type $(r, s)$, 
where $r$ and $s$ are integers satisfying $g=rs$, $r \geq 2$, and $s \geq 2$. Take a  member $S$ of $|-K_X|$. 
Set $\cU_S := \cU_X|_S$ and we define $\cU_S^\perp$ by the exact sequence 
\[
0 \to \cU_S^\perp \to H^0(S, \cU_S^\vee) \otimes \MO_S 
\to \cU_S^\vee \to 0. 
\]
Then $\cU_X^\perp|_S \simeq \cU_S^\perp$. 
Indeed, this follows from 
$H^0(X, \cU_X^\vee) \xrightarrow{\simeq} H^0(S, \cU_S^\vee)$ and the exact sequence $0 \to \cU_X^\perp|_S \to H^0(X, \cU_X^\vee) \otimes \MO_S \to \cU_X^\vee|_S \to 0$. 
\end{rem}

\subsection{Slope stability}






\begin{prop}\label{p Mukai bdl stable}
Let $X$ be a prime Fano threefold of genus $g$. 
Let $\cU_X$ be a Mukai bundle of type $(r, s)$, 
where $r$ and $s$ are integers satisfying $g=rs$, $r \geq 2$, and $s \geq 2$.
Then $\cU_X$ is $\mu$-stable. 
\end{prop}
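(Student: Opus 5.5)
The plan is to apply Lemma \ref{l stablity criterion} directly on $X$, with $\kappa = k$ and $H = -K_X$. A prime Fano threefold is smooth and projective, and $\Pic X = \Z K_X = \Z(-K_X)$ with $-K_X$ ample. So the only work is to check the three hypotheses of that lemma for $E := \cU_X$.

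The checks go as follows.
\begin{enumerate}
\item The first hypothesis, $c_1(\cU_X) = K_X = -(-K_X)$, is Definition \ref{d Mukai bdl X}(1).
\item The second hypothesis asks that $\cU_X^\vee$ be globally generated at the generic point. This follows from the stronger statement that $\cU_X^\vee$ is globally generated everywhere, which is Definition \ref{d Mukai bdl X}(3).
\item The third hypothesis, $H^0(X, \cU_X) = 0$, is part of Definition \ref{d Mukai bdl X}(2).
\end{enumerate}
Lemma \ref{l stablity criterion} then gives that $\cU_X$ is $\mu_{-K_X}$-stable. Since $\rho(X) = 1$, this is the same as $\mu$-stability in the sense of Subsection \ref{ss notation}.

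I do not expect a real obstacle here. The substance sits in Lemma \ref{l stablity criterion} and in Lemma \ref{l trivial criterion}, which it uses. The point to watch is that the destabilising subsheaf $F$ produced there is reflexive of some rank $m < \rank \cU_X$. Its determinant $c_1(F) = sH$ must have $s \geq 0$, and then $F$ embeds into a trivial bundle through the dual of the generically surjective map $\MO_X^{\oplus n} \to \cU_X^\vee$. Lemma \ref{l trivial criterion} then forces $F$ to be trivial, contradicting $H^0(\cU_X) = 0$. All of this is already proved in general, so the proof of the proposition amounts to the verification above.
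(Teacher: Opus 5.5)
Your proposal is correct and is exactly the paper's argument. The paper's proof is the single line ``follows from Definition~\ref{d Mukai bdl X} and Lemma~\ref{l stablity criterion}''. Your check of the three hypotheses with $H=-K_X$, using conditions (1)--(3) of the definition, is precisely what that line leaves implicit.
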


\begin{proof}
The assertion follows from  Definition \ref{d Mukai bdl X} and Lemma \ref{l stablity criterion}. 
\qedhere

\end{proof}

\begin{prop}\label{p restriction stable}
Let $X$ be a prime Fano threefold of genus $g$. 
Let $\cU_X$ be a Mukai bundle of type $(r, s)$, 
where $r$ and $s$ are integers satisfying $g=rs$, $r \geq 2$, and $s \geq 2$.
For a field extension $\kappa/k$, 
let $S$ be a smooth member of $|-K_{X_{\kappa}}|$. 
Set $\cU_S := \cU_X|_S$. 
Then the following hold. 
\begin{enumerate}
\item 
$\cU_S^\vee$ is globally generated, $H^0(S, \cU_S)=0$, and $v(\cU_S) = (r, K_X|_S, s)$
\item 
If $\Pic(S) = \Z (K_X|_S)$, then $\cU_S$ is $\mu$-stable. 
\end{enumerate}
\end{prop}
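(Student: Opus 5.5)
For (1), I will restrict the defining properties of $\cU_X$ to $S$, working over $\kappa$ after base change; all hypotheses and cohomology vanishings are preserved by flat base change. Global generation of $\cU_S^\vee = \cU_X^\vee|_S$ is immediate from Definition \ref{d Mukai bdl X}(3).

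For $H^0(S,\cU_S)=0$, I will use the exact sequence
\[
0 \to \cU_X(K_X) \to \cU_X \to \cU_S \to 0.
\]
Since $H^0(X,\cU_X)=H^1(X,\cU_X)=0$, it is enough to show $H^1(X,\cU_X(K_X))=0$. By Serre duality, $H^1(X,\cU_X(K_X)) \simeq H^2(X,\cU_X^\vee)^\vee$, and this vanishes by Definition \ref{d Mukai bdl X}(4).

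For the Mukai vector, the rank is $r$ and $c_1(\cU_S)=K_X|_S$. To get $s(\cU_S)$ I will compute $\chi(S,\cU_S)$ and use $\chi = \rank + s$ from the preliminaries. The same sequence gives
\[
\chi(S,\cU_S)=\chi(X,\cU_X)-\chi(X,\cU_X(K_X)) = 0-\chi(X,\cU_X(K_X)).
\]
By Serre duality on the threefold, $\chi(X,\cU_X(K_X))=-\chi(X,\cU_X^\vee)=-(r+s)$, using Definition \ref{d Mukai bdl X}(4). Hence $\chi(S,\cU_S)=r+s$, so $s(\cU_S)=s$.

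For (2), I will apply Lemma \ref{l stablity criterion} on $S$ with $H:=-K_X|_S$, which generates $\Pic S$ and is ample. The three hypotheses are:
\begin{itemize}
\item $c_1(\cU_S)=-H$, which holds by (1);
\item $\cU_S^\vee$ is globally generated, hence generically globally generated;
\item $H^0(S,\cU_S)=0$, which is part of (1).
\end{itemize}
The lemma then gives $\mu$-stability.

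The only point needing care is the duality step $H^1(\cU_X(K_X))\simeq H^2(\cU_X^\vee)^\vee$ together with the Euler characteristic bookkeeping; I expect no real obstacle there.
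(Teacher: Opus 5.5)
Your proposal is correct and follows the paper's proof: the same restriction sequence with Serre duality $H^1(X,\cU_X(K_X))\simeq H^2(X,\cU_X^\vee)^\vee=0$ for $H^0(S,\cU_S)=0$, and Lemma \ref{l stablity criterion} with $H=-K_X|_S$ for (2). The one small difference is in obtaining $\chi(S,\cU_S)=r+s$: you use additivity of $\chi$ and Serre duality on $X$, whereas the paper first proves $H^i(X,\cU_X^\vee)\simeq H^i(S,\cU_S^\vee)$ from $H^\bullet(X,\cU_X^\vee(K_X))=0$ and then applies Serre duality on $S$. Both routes are fine; the paper's also records $h^0(S,\cU_S^\vee)=r+s$ and $H^{>0}(S,\cU_S^\vee)=0$, which is reused later in the uniqueness arguments.
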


\begin{proof}
Let us show (1). 
As $\cU_X^{\vee}$ is globally generated, so is $\cU_S^{\vee} (= \cU_X^{\vee}|_S)$. 
By $H^0(\cU_X) = 0$ and $H^1(\cU_X(K_X)) = H^2(\cU_X^{\vee})=0$, 
the exact sequence 
\[
0 \to \cU_X(K_X) \to \cU_X \to \cU_S \to 0
\]
implies $H^0(S, \cU_S)=0$. 
Clearly, $\rank(\cU_S) = \rank(\cU_X) = r$ and $c_1(\cU_S) = c_1(\cU_X)|_S = K_X|_S$. 
Note that we have 
$H^i(X, \cU_X^{\vee}(K_X)) \simeq H^{3-i}(X, \cU_X)^{\vee} =0$. 
By 
\[
0 \to \cU_X^\vee(K_X) \to \cU_X^\vee \to \cU_S^\vee \to 0, 
\]
we get $H^i(X, \cU_X^{\vee}) \xrightarrow{\simeq} H^i(S, \cU_S^\vee)$ for every $i$. 
Thus $H^{>0}(S, \cU_S^\vee)=0$ and $h^0(S, \cU_S^\vee)=r+s$. 
The Riemann--Roch theorem and Serre duality imply
\[
r+s = \chi(S, \cU_S^\vee) = \chi(S, \cU_S) = 
2r + \frac{1}{2} c_1(\cU_S)^2 -c_2(\cU_S) 
\]
\[
= 2r + \frac{1}{2} c_1(\cU_S)^2 + (s(\cU_S) -r - \frac{1}{2}c_1(\cU_S)^2) 
= r+ s(\cU_S). 
\]
Hence (1) holds. 
The assertion (2) follows from (1) and Lemma \ref{l stablity criterion}. 
\end{proof}

\subsection{Uniqueness of Mukai bundles}

\begin{prop}\label{p uniqueness Mukai bdl0}
Let $X$ be a prime Fano threefold of genus $g$. 
Take integers $r$ and $s$ satisfying $g=rs$, $r \geq 2$, $s \geq 2$, and $(r, s) \neq (2, 2)$. 
Let $\cU_1$ and $\cU_2$ be vector bundles  such that the following hold: 
\begin{enumerate}
\item $h^0(X, \cU_1^\vee) =r+s$, 
$H^0(X, \cU_1^\vee(K_X))=0$, 
$\cU_1^\vee$ is globally generated, 
and 
$\cU_1^\perp$ is $\mu$-stable for the vector bundle $\cU_1^\perp$ defined by the exact sequence 
\[
0 \to \cU_1^\perp \to H^0(X, \cU_1^\vee) \otimes \MO_X \to \cU_1^\vee \to 0. 
\]
\item $\cU_2$ is a Mukai bundle of type $(r, s)$. 
\item There exists a smooth member $S \in |-K_X|$ such that $\cU_1|_S \simeq \cU_2|_S$. 
\end{enumerate}
Then $\cU_1 \simeq \cU_2$. 
\end{prop}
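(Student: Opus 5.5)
The plan is to pass to the perpendicular bundles and build a morphism $\cU_1^\vee \to \cU_2^\vee$ extending the given isomorphism on $S$. Then a slope argument should force this morphism to be an isomorphism on $X$.

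Set $V := H^0(X, \cU_1^\vee)$, so $\dim V = r+s$ by (1). First I would check that restriction $V \to H^0(S, \cU_1^\vee|_S)$ is an isomorphism.
\begin{itemize}
\item It is injective by the sequence $0 \to \cU_1^\vee(K_X) \to \cU_1^\vee \to \cU_1^\vee|_S \to 0$ and the hypothesis $H^0(X, \cU_1^\vee(K_X))=0$.
\item By (3) we have $\cU_1^\vee|_S \simeq \cU_2^\vee|_S$. The proof of Proposition \ref{p restriction stable} gives $h^0(S, \cU_2^\vee|_S) = r+s$, and more precisely $H^0(X, \cU_2^\vee) \xrightarrow{\simeq} H^0(S, \cU_2^\vee|_S)$.
\item Comparing dimensions, the restriction map for $\cU_1^\vee$ is bijective.
\end{itemize}
Consider the composite $V \otimes \MO_S \to \cU_1^\vee|_S \xrightarrow{\simeq} \cU_2^\vee|_S$. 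It is determined by $r+s$ sections of $\cU_2^\vee|_S$, and each of these lifts uniquely to a section of $\cU_2^\vee$ on $X$. This produces a morphism $\psi : V \otimes \MO_X \to \cU_2^\vee$ whose restriction to $S$ is the given composite.

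Next I would show that $\psi$ kills $\cU_1^\perp$. The composite $\cU_1^\perp \hookrightarrow V\otimes \MO_X \xrightarrow{\psi} \cU_2^\vee$ restricts to zero on $S$, because on $S$ it factors through the evaluation map $V \otimes \MO_S \to \cU_1^\vee|_S$, which kills $\cU_1^\perp|_S$. Hence the composite factors through $\cU_2^\vee(-S) = \cU_2^\vee(K_X)$. So it suffices to prove
\[
\Hom(\cU_1^\perp, \cU_2^\vee(K_X)) = 0.
\]
This is the step I expect to be the main point, and it is where $(r,s)\neq(2,2)$ enters.
\begin{itemize}
\item $\cU_1^\perp$ is $\mu$-stable by (1).
\item $\cU_2$ is $\mu$-stable by Proposition \ref{p Mukai bdl stable}, hence so are $\cU_2^\vee$ and its twist $\cU_2^\vee(K_X)$.
\item Writing $d := (-K_X)^3$, the slopes with respect to $-K_X$ are
\[
\mu(\cU_1^\perp) = -\frac{d}{s}, \qquad \mu(\cU_2^\vee(K_X)) = \frac{d}{r} - d.
\]
\end{itemize}
The first inequality needed is that $-1/s > 1/r - 1$, which is equivalent to $1/r + 1/s < 1$. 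This holds for $r,s\geq 2$ unless $r=s=2$.

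The first slope uses that $c_1(\cU_1^\perp) = K_X$ and $\rank \cU_1^\perp = s$. For these I use the following.
\begin{itemize}
\item $\cU_1$ has rank $r$, since $\cU_1|_S \simeq \cU_2|_S$.
\item $c_1(\cU_1)|_S = K_X|_S$. Since $\Pic X = \Z K_X$ and restriction $\Pic X \to \Pic S$ is injective (intersect with $(-K_X)^2$), we get $c_1(\cU_1) = K_X$.
\end{itemize}
Since a morphism from a stable sheaf to a stable sheaf of strictly smaller slope vanishes, the displayed Hom is zero. Hence $\psi$ descends to a morphism $\varphi : \cU_1^\vee \to \cU_2^\vee$ whose restriction to $S$ is the given isomorphism.

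Finally, $\det\varphi$ is a section of $\det\cU_2^\vee \otimes (\det\cU_1^\vee)^{-1} \simeq \MO_X$. It is nonzero because it is nonvanishing on $S$, so it is a nonzero constant and $\varphi$ is an isomorphism. Dualising gives $\cU_1 \simeq \cU_2$.
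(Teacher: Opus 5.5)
Your proof is correct and is essentially the paper's argument: lift the evaluation map on $S$ using $H^0(X,\cU_2^\vee)\simeq H^0(S,\cU_2^\vee|_S)$, note that the composite $\cU_1^\perp\to\cU_2^\vee$ vanishes on $S$ and so factors through $\cU_2^\vee(K_X)$, then kill it by stability using $1/r+1/s<1$. The only difference is the last step: you use that $\det\varphi\in H^0(X,\MO_X)$ is nonzero, whereas the paper uses that $\alpha$ is an isomorphism and $\cU_2^\vee$ is globally generated to get a surjection $\cU_1^\vee\twoheadrightarrow\cU_2^\vee$ between bundles of equal rank; in your variant the bijectivity of $H^0(X,\cU_1^\vee)\to H^0(S,\cU_1^\vee|_S)$, and hence the hypothesis $H^0(X,\cU_1^\vee(K_X))=0$, is not actually needed.
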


\begin{proof}
By (2) and (3), 
we have $\rank \cU_1 = \rank \cU_2=r$, 
and both  
$\cU_1$ and $\cU_2$ are $\mu$-stable of the same slope (Proposition \ref{p Mukai bdl stable}). 
Fix an isomorphism $\varphi : \cU_1^\vee|_S \xrightarrow{\simeq} \cU_2^\vee|_S$, whose existence is ensured by (3).


By the induced isomorphism $H^0(X, \cU_2^\vee) \xrightarrow{\simeq} H^0(S, \cU_2^\vee|_S)$, 
there exists a unique linear map 
$\alpha\colon
H^0(X,\cU_1^\vee)
\to 
H^0(X,\cU_2^\vee)
$ 
completing the following commutative diagram: 
\[
\begin{tikzcd}
H^0(X,\cU_1^\vee)
\arrow[r,"\alpha"]
\arrow[d]
&
H^0(X,\cU_2^\vee)
\arrow[d, "\simeq"]
\\
H^0(S,\cU_1^\vee|_S)
\arrow[r,"\simeq"', "H^0(\varphi)"]
&
H^0(S,\cU_2^\vee|_S).
\end{tikzcd}
\]
It follows from $H^0(X, \cU_1^\vee(K_X))=0$ 
that the left vertical restriction map 
$H^0(X, \cU_1^\vee) \to H^0(S, \cU_1^\vee|_S)$ is injective. 
This, together with $h^0(X, \cU_1^{\vee}) = r+s =h^0(X, \cU_2^{\vee})$, implies that this restriction map and $\alpha$ are isomorphisms. 
Then we get  the following commutative diagram: 
\[
\begin{tikzcd}[column sep={between origins,11.6em}]
\cU_1^\perp
\arrow[r,hook,"\iota"]
\arrow[d]
\arrow[rrr, bend left=17, "\psi"']
&
H^0(X,\cU_1^\vee)\otimes\cO_X
\arrow[r,"\alpha\otimes\operatorname{id}_{\MO_X}", "\simeq"']
\arrow[d]
&
H^0(X,\cU_2^\vee)\otimes\cO_X
\arrow[r,two heads,"\pi"]
\arrow[d]
&
\cU_2^\vee
\arrow[d]
\\
\cU_1^\perp|_S
\arrow[r,hook,"\iota|_S"]
\arrow[rdd, "0", bend right]
&
H^0(X,\cU_1^\vee)\otimes\cO_S
\arrow[r,"\alpha \otimes\operatorname{id}_{\MO_S}", "\simeq"']\arrow[d, "\simeq"]
&
H^0(X,\cU_2^\vee)\otimes\cO_S
\arrow[r,two heads,"\pi|_S"] \arrow[d, "\simeq"]
&
\cU_2^\vee|_S\\
&H^0(S,\cU_1^\vee|_S)\otimes\cO_S
\arrow[r,"H^0(\varphi) \otimes\operatorname{id}_{\cO_S}", "\simeq"']
\arrow[d]
&
H^0(S,\cU_2^\vee|_S)\otimes\cO_S
\arrow[d]
\\
&\cU_1^\vee|_S
\arrow[r,"\varphi"]
&
\cU_2^\vee|_S 
\arrow[ruu, equal, bend right]
\end{tikzcd}
\]
It holds that $\psi|_S=0$, because 
the second horizontal sequence is obtained by taking the restriction of the first one to $S$.
Therefore, $\psi$ factors through 
$\widetilde{\psi}\colon
\cU_1^\perp
\longrightarrow
\cU_2^\vee(K_X)$. 
If $\psi=0$, then $\psi$ induces a surjection $\cU_1^\vee \twoheadrightarrow \cU_2^\vee$, which is automatically an isomorphism.  
Thus it is enough to prove that $\wt{\psi}=0$. 
Since {$\cU_1^\perp$} and $\cU_2^\vee(K_X)$ are $\mu$-stable, 
it suffices to show $\mu_{-K_X}(\cU_1^\perp)>\mu_{-K_X}(\cU_2^\vee(K_X))$, which follows from 
$-\frac{1}{s} > -\frac{r-1}{r}$, 
$\mu_{-K_X}(\cU_1^\perp)
=-\frac{(-K_X)^3}{s}$, and 
\[
\mu_{-K_X}(\cU_2^\vee(K_X))
= \frac{ c_1( \cU_2^\vee(K_X)) \cdot (-K_X)^2}{r} = 
- \frac{r-1}{r} (-K_X)^3. 
\]




\end{proof}

\begin{lem}\label{l Mukai vs Laz}
Let $S$ be a smooth K3 surface over a field $\kappa$ such that 
$\Pic S = \Z H$ for an ample Cartier divisor $H$. 
Set $g := \frac{1}{2} H^2 +1$. 
Take integers $r$ and $s$ satisfying $g=rs$, $r \geq 2$, and $s \geq 2$. 
For each $i \in \{1, 2\}$, 
let $\cV_i$ be a $\mu$-stable vector bundle with $v(\cV_i) = (r, -H, s)$. 
Then $\cV_1 \simeq \cV_2$. 
\end{lem}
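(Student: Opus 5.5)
This is Mukai's rigidity argument: compute $\chi(\cV_1,\cV_2)$ to produce a nonzero map between the two bundles, then use $\mu$-stability with equal slopes to conclude it is an isomorphism. The base field $\kappa$ need not be algebraically closed, so I will keep track of $\kappa$-dimensions and never use $\dim \Hom = 1$.

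\textbf{Step 1 (Euler pairing).} Set $\chi(\cV_1,\cV_2) := \sum_i (-1)^i \dim_\kappa \Ext^i(\cV_1,\cV_2)$. By Riemann--Roch on the K3 surface $S$, which holds over any field by flat base change to $\ol{\kappa}$, this pairing equals minus the Mukai pairing:
\[
\chi(\cV_1,\cV_2) = -\langle v(\cV_1), v(\cV_2)\rangle = c_1(\cV_1)\cdot c_1(\cV_2) - r_1 s_2 - r_2 s_1 .
\]
For $v = (r,-H,s)$ this gives $H^2 - 2rs = 2g - 2 - 2g = -2$, so $\chi(\cV_1,\cV_2) = 2$. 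The main thing to verify here is the sign convention, i.e.\ that $v(E)=(\rank E, c_1, s(E))$ with $s = \rank E + \mathrm{ch}_2$ is exactly Mukai's convention. It is, because $\chi(E) = \rank E + s(E) = -\langle v(\cO), v(E)\rangle$ with $v(\cO) = (1,0,1)$.

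\textbf{Step 2 (nonzero homomorphism).} By Serre duality ($\omega_S \cong \cO_S$), $\dim \Ext^2(\cV_1,\cV_2) = \dim \Hom(\cV_2,\cV_1)$. Hence
\[
\dim \Hom(\cV_1,\cV_2) + \dim \Hom(\cV_2,\cV_1) \geq 2 > 0 ,
\]
and by symmetry we may assume there is a nonzero $f : \cV_1 \to \cV_2$.

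\textbf{Step 3 (stability forces an isomorphism).} This is the key step. Both bundles have slope $\mu = -H^2/r$, and since $\Pic S = \Z H$ with $\gcd(r, -1) = 1$, no proper saturated subsheaf can have the same slope. Let $I = \Im f \subset \cV_2$, a nonzero torsion-free sheaf.
\begin{itemize}
\item If $f$ is not injective, then $\rank I < r$. Stability of $\cV_1$ gives $\mu(I) > \mu(\cV_1)$ (as $I$ is a quotient), while stability of $\cV_2$ gives $\mu(I) < \mu(\cV_2)$ (passing to the saturation if needed), which is a contradiction.
\item So $f$ is injective with $\rank I = r$. Then $c_1(\cV_2) - c_1(\cV_1) = 0$ shows the cokernel is supported in codimension at least $2$.
\item Since $v(\cV_1) = v(\cV_2)$, we have $c_2(\cV_1) = c_2(\cV_2)$, so the cokernel has length $0$ and $f$ is an isomorphism.
\end{itemize}
The argument works directly over $\kappa$; no base change to $\ol{\kappa}$ is needed.
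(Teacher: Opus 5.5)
Your argument is correct and is essentially the standard Mukai rigidity argument that the paper invokes by citing \cite[Lemma 3.2]{BKM24}: $\chi(\cV_1,\cV_2)=-v^2=2$, Serre duality then gives a nonzero morphism in one direction, and $\mu$-stability together with equal rank, $c_1$ and $c_2$ forces that morphism to be an isomorphism. You work with $\kappa$-dimensions and with stability against subsheaves defined over $\kappa$, rather than assuming $\dim\Hom=1$; this is what makes the argument valid over a non-closed field, as needed for the generic member $T$ in the paper's application.
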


\begin{proof}
The same argument as in \cite[Lemma 3.2]{BKM24} works. 
\end{proof}

\begin{prop}\label{p uniqueness Mukai bdl}
Let $X$ be a prime Fano threefold of genus $g \geq 6$. 
Let $\cU_1$ and $\cU_2$ be Mukai bundles of type $(r, s)$, 
where $r$ and $s$ are integers satisfying $g=rs$, $r \geq 2$, and $s \geq 2$. 
Then $\cU_1 \simeq \cU_2$. 
\end{prop}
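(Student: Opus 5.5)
The plan is to reduce to Proposition \ref{p uniqueness Mukai bdl0}, taking $\cU_1$ and $\cU_2$ to be the two given Mukai bundles.

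\textbf{Checking hypothesis (1).} Since $g \geq 6$, we have $(r,s) \neq (2,2)$. For the Mukai bundle $\cU_1$:
\begin{itemize}
\item[(a)] $h^0(\cU_1^\vee)=r+s$, and $\cU_1^\vee$ is globally generated, by Definition \ref{d Mukai bdl X}.
\item[(b)] By Serre duality, $H^0(X,\cU_1^\vee(K_X)) \simeq H^3(X,\cU_1)^\vee = 0$.
\item[(c)] By Lemma \ref{l perp is Mukai}, $\cU_1^\perp$ is a Mukai bundle of type $(s,r)$, hence $\mu$-stable by Proposition \ref{p Mukai bdl stable}.
\end{itemize}
Hypothesis (2) holds for $\cU_2$ by assumption.

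\textbf{The remaining hypothesis (3).} We must find a smooth $S \in |-K_X|$ with $\cU_1|_S \simeq \cU_2|_S$. By Proposition \ref{p restriction stable}(1), both restrictions have Mukai vector $(r, K_X|_S, s)$. If $\Pic S = \Z(K_X|_S)$, both are $\mu$-stable by Proposition \ref{p restriction stable}(2), and Lemma \ref{l Mukai vs Laz} (with $H=-K_X|_S$) gives the isomorphism.

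\textbf{Main obstacle.} Over $k$, a smooth $S$ with Picard rank one need not exist; the paper's own remark notes failures of this kind, e.g.\ supersingular hyperplane sections. The plan is to imitate Proposition \ref{p generic Lefschetz}.
\begin{itemize}
\item Take the generic member $T$ of a Lefschetz pencil, base-changed to $\kappa = k(t^{1/p^\infty})$. Then $T$ is a smooth member of $|-K_{X_\kappa}|$ with $\Pic T = \Z(K_X|_T)$.
\item Proposition \ref{p restriction stable} is stated for smooth members over any field extension, so it applies to $T$.
\item Lemma \ref{l Mukai vs Laz} is stated over an arbitrary field $\kappa$, so it yields $\cU_1|_T \simeq \cU_2|_T$ over $\kappa$.
\end{itemize}

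\textbf{Concluding over $\kappa$ and descending.} Apply Proposition \ref{p uniqueness Mukai bdl0} to $X_\kappa$ and $T$. Its proof works verbatim over $\kappa$: it uses only cohomology vanishing (which is compatible with flat base change) and stability. Stability of $\cU_i$ over $\kappa$ follows from Lemma \ref{l stablity criterion}, which holds over any field since $\Pic X_\kappa = \Z K_X$. This gives $\cU_{1,\kappa} \simeq \cU_{2,\kappa}$.

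Finally, descend to $k$. $\Hom(\cU_1,\cU_2)\otimes_k\kappa = \Hom(\cU_{1,\kappa},\cU_{2,\kappa})$ is nonzero, so $\Hom(\cU_1,\cU_2)\neq 0$. A nonzero map between $\mu$-stable bundles of the same rank and slope is an isomorphism, so $\cU_1 \simeq \cU_2$.

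Alternatively, the descent can be done via the isomorphism $\cU_{1,\kappa}\simeq\cU_{2,\kappa}$ being defined over a finitely generated subextension and then specialising. The stability argument is cleaner, so that is the route I would take.
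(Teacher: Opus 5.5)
Your proposal is correct and follows essentially the paper's route. Like the paper, you verify hypothesis (1) of Proposition \ref{p uniqueness Mukai bdl0}, obtain $\cU_1|_T \simeq \cU_2|_T$ on a generic anticanonical member $T$ with $\Pic T = \Z(K_X|_T)$ via Lemma \ref{l Mukai vs Laz}, and then pass between $k$ and a larger field. Your variations are harmless, and slightly more explicit than the paper: you take the generic member of a pencil, so that Proposition \ref{p generic Lefschetz} actually justifies $\Pic T = \Z(K_X|_T)$, rather than the generic member of all of $|-K_X|$. You also apply Proposition \ref{p uniqueness Mukai bdl0} directly over $\kappa$ and descend via $\Hom(\cU_1,\cU_2)\neq 0$ and stability, rather than passing to $\ol{\kappa}$.
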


\begin{proof}
After replacing $X$ by a base change to a larger algebraically closed field, 
it is enough to find  a smooth member $S \in |-K_X|$ such that 
$\cU_1|_S \simeq \cU_2|_S$  (Proposition \ref{p uniqueness Mukai bdl0}).  
Lemma \ref{l Mukai vs Laz} is applicable for the generic member $T$ of $|-K_X|$ and the base field $\kappa$ of $T$, so that 
$\cU_1|_T \simeq \cU_2|_T$. 
Hence the base change $S :=T \times_\kappa \ol{\kappa}$ of $T$ to the algebraic closure $\ol{\kappa}$ is a required member of $|-K_{X_{\ol{\kappa}}}|$. 
\qedhere

\end{proof}

\begin{lem}\label{l BKM25 3.3}
Let $S$ be a smooth K3 surface over a field $\kappa$ such that 
$\Pic S = \Z H$ for an ample Cartier divisor $H$. 
Set $g := \frac{1}{2} H^2 +1$. 
Let $h : S \to \Gr(r, r+s)$ be a morphism such that 
$h^*\MO_{\Gr(r, r+s)}(1) \simeq \MO_S(H)$, 
where  $r$ and $s$ are integers satisfying $g=rs$, $r \geq 2$, and $s \geq 2$. 
Assume that $h(S)$ does not lie in the sub-Grassmannians
\[
\Gr(r, r+s-1) \subset \Gr(r, r+s), \qquad or \qquad 
\Gr(r-1, r+s-1) \subset \Gr(r, r+s). 
\]
Let $\cU$ $($resp.\ $\cQ$$)$ is the universal subbundle 
$($resp.\ the universal quotient bundle$)$ on $\Gr(r, r+s)$. 
Then the following hold: 
\begin{enumerate}
\item $h^*\cU$ and $h^*\cQ$ are $\mu$-stable vector bundles. 
\item $v(h^*\cU) =(r, -H, s)$ and $v(h^*\cQ) = (s, H, r)$. 
\end{enumerate}
\end{lem}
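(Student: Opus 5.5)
We pull back the universal sequence $0 \to \cU \to V\otimes \MO \to \cQ \to 0$ on $\Gr(r, r+s)$, where $V$ is the $(r+s)$-dimensional space with $\Gr(r,r+s)$ the Grassmannian of $r$-dimensional subspaces of $V$. Set $\cV := h^*\cU$ and $\cW := h^*\cQ$, so we have $0 \to \cV \to V \otimes \MO_S \to \cW \to 0$ on $S$. The two degeneracy hypotheses will be translated into cohomological statements. The hyperplane sections $V^\vee$ map to $H^0(S, \cV^\vee)$ and to global sections of the pulled back dual; if $h(S)\subset \Gr(r, r+s-1)$ (every subspace lies in a fixed hyperplane), some nonzero functional $V\to\kappa$ restricts to zero on $\cV$, so we get $H^0(S,\cW^\vee)\ne 0$. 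Thus the hypothesis gives $V^\vee \hookrightarrow H^0(\cV^\vee)$, i.e.\ $H^0(S, \cW^\vee)=0$. Dually, $h(S)\subset\Gr(r-1,r+s-1)$ means a fixed vector lies in every subspace, i.e.\ $H^0(S,\cV)\neq 0$; so the hypothesis gives $H^0(S,\cV)=0$.

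For (2), we have $c_1(\cV)=-H$ since $\det\cU^\vee=\MO(1)$, and $c_1(\cW)=H$. For the $s$-component we compute $c_2$ via the Schubert class: $c_2(\cQ)$ and $c_2(\cU^\vee)$ are Schubert cycles with $c_1^2=c_2(\cQ)+c_2(\cU^\vee)$. It is cleaner to argue via Euler characteristics. We have $\chi(\cV)+\chi(\cW)=2(r+s)$, and $v(\cV)+v(\cW)=(r+s,0,r+s)$. Since $\cV^\vee,\cW$ are globally generated with $\Pic S=\Z H$, we plan to use stability from (1) to get the Mukai vector: a stable, hence simple, bundle has $v^2\ge -2$. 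With $v(\cV)=(r,-H,a)$ we get $v(\cW)=(s,H,r+s-a)$, and
\[
v(\cV)^2 = 2g-2-2ra \geq -2, \qquad v(\cW)^2 = 2g-2-2s(r+s-a)\geq -2.
\]
The first gives $ra\le g=rs$, i.e.\ $a\le s$, and the second gives $s(r+s-a)\le rs$, i.e.\ $a\ge s$. Hence $a=s$, as claimed. Over a nonclosed $\kappa$ we first pass to $\ol\kappa$; simplicity is what is needed, and $\Hom(\cV,\cV)$ is a field $\kappa'$. If $\kappa'\ne\kappa$, we decompose as in Corollary \ref{c semisimple1.5} and apply the inequality to summands. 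I expect this to be a nuisance but to be handled as before; alternatively we assume $\kappa$ perfect $C_1$ as in the applications.

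For (1), we apply Lemma \ref{l stablity criterion} to $\cV$: $c_1(\cV)=-H$ with $\Pic S=\Z H$, $\cV^\vee$ is globally generated (a quotient of $V^\vee\otimes\MO$), and $H^0(\cV)=0$ by the second hypothesis. Hence $\cV$ is $\mu$-stable. For $\cW$, we apply the same lemma to $\cW^\vee$: $c_1(\cW^\vee)=-H$, $\cW$ is globally generated, and $H^0(\cW^\vee)=0$ by the first hypothesis. So $\cW^\vee$ is stable, and hence so is $\cW$. The main obstacle is the translation of the sub-Grassmannian conditions into these vanishings. That step is elementary but must be done carefully with the correct conventions (sub versus quotient). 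The Mukai vector step is then as above.
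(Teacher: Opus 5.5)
Your route is the one the paper invokes when it defers to \cite[Lemma 3.3]{BKM25}. The two non-containment hypotheses are exactly $H^0(S,h^*\cU)=0$ and $H^0(S,(h^*\cQ)^\vee)=0$. The implication you actually need is the converse of the one you wrote, but it is equally immediate from $H^0(S,(h^*\cQ)^\vee)=\ker(V^\vee\to H^0(S,h^*\cU^\vee))$. Lemma \ref{l stablity criterion}, applied to $h^*\cU$ and to $(h^*\cQ)^\vee$, then gives (1). Part (2) follows from $v(h^*\cU)+v(h^*\cQ)=(r+s,0,r+s)$ together with $v^2\geq -2$ for both bundles, and your arithmetic ($a\leq s$ from the first inequality, $a\geq s$ from the second) is correct. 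Over an algebraically closed field, or whenever both endomorphism algebras equal $\kappa$, this is a complete proof.

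What is not settled is the non-closed case you postpone, and here it is not a formality. The lemma is stated over an arbitrary $\kappa$, and Proposition \ref{p uniqueness Mukai bdl1} applies it to the generic member of $|-K_X|$. Its base field $K(|-K_X|)$ has transcendence degree $g+1$ over $k$ and is neither perfect nor $C_1$, so your fallback ``assume $\kappa$ perfect $C_1$ as in the applications'' does not match how the lemma is used.

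Over a general $\kappa$, $\mu$-stability only makes $D:=\Hom(\cV,\cV)$ a finite-dimensional division algebra; your claim that it is a field uses $\Br=0$, i.e.\ a $C_1$-type hypothesis. Serre duality then gives only $-v(\cV)^2=\chi(\cV,\cV)\leq 2\dim_\kappa D$, i.e.\ $r(a-s)\leq \dim_\kappa D-1$. This forces $a\leq s$ only once you know $\dim_\kappa D\leq r$. Symmetrically, with $\mathcal{W}:=h^*\cQ$, you need $\dim_\kappa\Hom(\mathcal{W},\mathcal{W})\leq s$ to get $a\geq s$. ``Apply the inequality to summands'' does not supply these bounds.

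For a perfect $C_1$ field the argument does close. There $D$ is a field of degree $n$, and the $n$ conjugate summands of Corollary \ref{c semisimple1.5} have equal rank, so $n\mid r$ and $r(a-s)\leq n-1<r$; the same works for $\mathcal{W}$. So you have two options:
\begin{enumerate}
\item restrict the lemma to perfect $C_1$ fields, and run the application with the perfection of the generic member of a general pencil, for which $\Pic T=\Z H$ by Proposition \ref{p generic Lefschetz};
\item give a separate bound on $\dim_\kappa\Hom(\cV,\cV)$ and $\dim_\kappa\Hom(\mathcal{W},\mathcal{W})$ valid over arbitrary $\kappa$.
\end{enumerate}
The paper's one-line proof does not address this point either.
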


\begin{proof}
The same argument as in \cite[Lemma 3.3]{BKM25} works. 
\end{proof}

\begin{prop}\label{p uniqueness Mukai bdl1}
Let $X$ be a prime Fano threefold of genus $g \geq 6$. 
Let $f : X \to \Gr(r, r+s)$ be a morphism such that $f^*\MO_{\Gr(r, r+s)}(1) \simeq \MO_X(-K_X)$, 
where $r$ and $s$ are integers satisfying $g=rs$, $r \geq 2$, and $s \geq 2$. 
Assume that 
$f(X)$ does not lie in the sub-Grassmannians
\[
\Gr(r, r+s-1) \subset \Gr(r, r+s)\qquad or \qquad 
\Gr(r-1, r+s-1) \subset \Gr(r, r+s). 
\]
Set $\cU_1 := f^*\cU$ and let $\cU_2$ be a Mukai bundle of type $(r, s)$, 
where $\cU$ denotes the universal subbundle $\cU$ on $\Gr(r, r+s)$. 
Then $\cU_1 \simeq \cU_2$. 
\end{prop}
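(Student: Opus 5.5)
The plan is to apply Proposition \ref{p uniqueness Mukai bdl0}, with $\cU_1 := f^*\cU$ in the role of the bundle satisfying (1) and $\cU_2$ the given Mukai bundle. This requires checking hypotheses (1) and (3) of that proposition. As in the proof of Proposition \ref{p uniqueness Mukai bdl}, we first pass to a larger algebraically closed field and work with the generic member $T$ of $|-K_X|$ over its field of definition $\kappa$. The desired smooth member is then $S := T\times_\kappa\ol{\kappa}$.

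\textbf{Hypothesis (3).} We need an isomorphism $\cU_1|_T \simeq \cU_2|_T$.
\begin{itemize}
\item By the Lefschetz-type result for the generic member (as in Proposition \ref{p generic Lefschetz}, using \cite{Tan24} and primitivity, Theorem \ref{t primitive -K_X|_S}), we have $\Pic T = \Z H$ with $H = -K_X|_T$.
\item Since $f(X)$ avoids the two sub-Grassmannians, so does $f(T)$. This holds because $T$ spans the same linear span as $X$ under the Plücker embedding: the restriction $H^0(X,-K_X)\to H^0(T,H)$ is injective, so a linear condition vanishing on $T$ vanishes on $X$.
\item Lemma \ref{l BKM25 3.3} then shows that $h^*\cU = \cU_1|_T$ is $\mu$-stable with $v = (r,-H,s)$.
\item Proposition \ref{p restriction stable} gives that $\cU_2|_T$ is $\mu$-stable with the same Mukai vector.
\item Lemma \ref{l Mukai vs Laz} now yields $\cU_1|_T \simeq \cU_2|_T$, and base change to $\ol{\kappa}$ gives (3).
\end{itemize}

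\textbf{Hypothesis (1).} Four conditions on $\cU_1$ are needed.
\begin{itemize}
\item \emph{Global generation.} $\cU_1^\vee = f^*\cU^\vee$ is globally generated, being a pullback of a globally generated bundle.
\item \emph{Vanishing of $H^0(X,\cU_1^\vee(K_X))$.} A nonzero section would restrict to a nonzero map $\MO_X(-K_X)\to\cU_1^\vee$. This contradicts stability: $\cU_1^\vee$ has slope $\frac{1}{r}(-K_X)^3 < (-K_X)^3$. Here stability of $\cU_1$ follows from $\cU_1|_S\simeq\cU_2|_S$ being stable, or more simply from $\cU_1|_T$ being stable, since restriction of a destabilising sheaf destabilises.
\item \emph{Dimension count $h^0(X,\cU_1^\vee)=r+s$.} Compare with $\cU_2$ via the restriction sequences. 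Since $\cU_1^\vee|_S\simeq\cU_2^\vee|_S$, Proposition \ref{p restriction stable}(1) gives $h^0(S,\cU_1^\vee|_S)=r+s$. The restriction $H^0(X,\cU_1^\vee)\to H^0(S,\cU_1^\vee|_S)$ is injective by the previous vanishing, so $h^0\le r+s$. For the reverse inequality, the $r+s$ sections pulled back from $\Gr(r,r+s)$ are linearly independent, because $f(X)$ is not contained in $\Gr(r,r+s-1)$.
\item \emph{Stability of $\cU_1^\perp$.} By the dimension count, $\cU_1^\perp$ is the pullback $f^*\cQ^\vee$ (up to the usual identification of the tautological sequence). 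Its restriction to $T$ is $h^*\cQ^\vee$, which is $\mu$-stable by Lemma \ref{l BKM25 3.3}. A destabilising subsheaf on $X$ would restrict to one on the generic member $T$, so $\cU_1^\perp$ is $\mu$-stable.
\end{itemize}

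\textbf{Remaining issues.} Proposition \ref{p uniqueness Mukai bdl0} excludes $(r,s)=(2,2)$, but this case has $g=4<6$ and so does not occur. We expect the main obstacle to be stability transfer from the generic member $T$ back to $X$, together with the verification that $f(T)$ still avoids the sub-Grassmannians. Both reduce to the fact that restriction to a generic hyperplane section preserves destabilising subsheaves and linear spans. This fact should be spelled out with care: the restriction of a saturated subsheaf to the generic member remains a subsheaf with the same rank and $c_1\cdot H$.
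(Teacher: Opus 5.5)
\textbf{Gap: why $f(T)$ avoids the sub-Grassmannians.} Your overall architecture matches the paper's: the generic member $T$, Lemma \ref{l BKM25 3.3} and Lemma \ref{l Mukai vs Laz} on $T$, the $h^0$ count via the pulled-back sections, $\cU_1^\perp\simeq (f^*\cQ)^\vee$, and then Proposition \ref{p uniqueness Mukai bdl0}. However, the step everything else depends on, that $f(T)$ avoids the sub-Grassmannians, is justified incorrectly.

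The restriction $H^0(X,\MO_X(-K_X))\to H^0(T,H)$ is not injective. From $0\to\MO_X\to\MO_X(-K_X)\to\MO_T(H)\to 0$, its kernel is the line spanned by the equation $s_T$ of $T$. So $f(T)$ spans only a hyperplane of the linear span of $f(X)$, and a linear condition can vanish on $T$ without vanishing on $X$.

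Concretely, $f(T)\subset\Gr(r,\Ker\tau)$ means that the section $f^*\tau$ of $\cU_1^\vee$ vanishes along $T$. This section is nonzero on $X$ by hypothesis, so $f^*\tau\in s_T\cdot H^0(\cU_1^\vee(K_X))$ with a nonzero factor. Excluding this needs $H^0(X,\cU_1^\vee(K_X))=0$, which you only obtain from stability of $\cU_1$. Both of your routes to that stability (via $\cU_1|_T$, or via $\cU_1|_S\simeq\cU_2|_S$) go through Lemma \ref{l BKM25 3.3} applied to $T$, so the argument is circular. The principle in your last paragraph, that restricting to a generic hyperplane section preserves linear spans, is false. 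Genericity of $T$ has to be used in an essential way.

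\textbf{The missing idea: an incidence dimension count} (the paper's Step 1).
\begin{itemize}
\item For each hyperplane $W\subset V$, $f^{-1}(\Gr(r,W))$ is a proper closed subset of $X$ by hypothesis; likewise $f^{-1}(\Gr(r-1,V/L))$ for each line $L\subset V$.
\item Every member of $|-K_X|$ is a prime divisor because $\Pic X=\Z K_X$. Hence each such proper closed subset contains only finitely many members.
\item These sub-Grassmannians are parametrised by $\P(V^\vee)$, respectively $\P(V)$, both of dimension $r+s-1$. Since $r+s-1<rs+1=\dim|-K_X|$, the incidence loci in $|-K_X|\times\P^{r+s-1}$ cannot dominate $|-K_X|$.
\item Therefore a general member, and hence the geometric generic member $T_{\ol{\kappa}}$, avoids every sub-Grassmannian, including those defined only over $\ol{\kappa}$.
\end{itemize}
With this step replaced, the rest of your proposal goes through as in the paper. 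This includes your stability transfer from $T$ to $X$ and your remark that $(r,s)=(2,2)$ is excluded by $g\geq 6$.
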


\begin{proof}
Let  $\cU_1^\perp$ be the vector bundle defined by the exact sequence $0 \to \cU_1^\perp \to H^0(X, \cU_1^\vee) \otimes \MO_X \to \cU_1^\vee \to 0$. 

\setcounter{step}{0}

\begin{step}\label{s1 uniqueness Mukai bdl1}
For a general member $S \in |-K_X|$, 
its image $f(S)$ does not lie in the sub-Grassmannians
\[
\Gr(r, r+s-1) \subset \Gr(r, r+s), \qquad or \qquad 
\Gr(r-1, r+s-1) \subset \Gr(r, r+s). 
\]
\end{step}

\begin{proof}[Proof of Step \ref{s1 uniqueness Mukai bdl1}]
We have $f : X \to \Gr(r, r+s)  =\Gr(r, V)$ for $V := k^{\oplus (r+s)}$. 

For a hyperplane $W \subset V$, we have the sub-Grassmannian $\Gr(r,W)\subset \Gr(r,V)$. 
Such sub-Grassmannians are parametrised by $\mathbb P(V^\vee) (\simeq \P^{r+s-1})$. 
Consider the incidence closed subset
\[
I_1
:=
\left\{
(S,W)\in |-K_X| \times\mathbb P(V^\vee)
\mathrel{}\middle|\mathrel{}
f(S)\subset\Gr(r,W)
\right\}, 
\]
where $|-K_X| \simeq \P^{g+1}$. 
For every $W \in \P(V^{\vee})$, 
$f^{-1}(\Gr(r, W))$ is a proper closed subset of $X$, and hence the induced morphism $\pi : I_1 \to \P(V^{\vee})$ is finite. 
Therefore, $\dim I_1 \leq \dim  \P(V^{\vee}) = r+s-1$. 
Since  
\[
\dim I_1 \leq r+s -1 < rs+1 =g+1 = \dim \P^{g+1} = \dim |-K_X|, 
\]
a general member $S \in |-K_X|$ is not contained in 
any sub-Grassmannian of the form $\Gr(r, W)$.

The same argument is applicable  to sub-Grassmannians of the form
$\Gr(r-1,V/L)\subset\Gr(r,V)$, 
where $L\subset V$ is a one-dimensional subspace. 
Indeed, such sub-Grassmannians 
are parametrised by $\mathbb P(V) (\simeq \P^{r+s-1})$.  
For a fixed $L \subset V$, the inverse image
$f^{-1}(\Gr(r-1,V/L))$ 
is a proper closed subset of $X$ by assumption, 
and hence contains only finitely many members of $|-K_X|$. 
Thus the corresponding incidence closed subset 
$I_2$ satisfies $\dim I_2 \leq r+s-1$. 
This completes the proof of Step \ref{s1 uniqueness Mukai bdl1}.    
\end{proof}

\begin{step}\label{s2 uniqueness Mukai bdl1}
$\cU_1$ and 
$f^*\cQ$
are $\mu$-stable. 
After replacing $X$ by the base change to a larger algebraically closed field, there exists a smooth member $S \in |-K_X|$ satisfying $\cU_1|_S \simeq \cU_2|_S$. 
\end{step}

\begin{proof}[Proof of Step \ref{s2 uniqueness Mukai bdl1}]
Let $T$ be the generic member of $|-K_X|$. 
We have $\Pic T = \Z H$  for $H :=-K_X|_T$. 
Moreover, the conclusion of Step \ref{s1 uniqueness Mukai bdl1} holds also for $T$. 
Hence Lemma \ref{l BKM25 3.3} is applicable for $T$, so that 
$f^*\cU|_T$ and $f^*\cQ|_T$ are $\mu$-stable. 
In particular, both $\cU_1 =f^*\cU$ and $f^*\cQ$ are $\mu$-stable.

Since both $\cU_1|_T = f^*\cU|_T$ and $\cU_2|_T$ are $\mu$-stable vector bundles with $v(\cU_1|_T) = v(\cU_2|_T)$ (Lemma \ref{l BKM25 3.3}), 
we get $\cU_1|_T \simeq \cU_2|_T$ (Lemma \ref{l Mukai vs Laz}). 
Therefore, the assertion holds by setting $S :=T \times_{\kappa} \ol{\kappa}$, which is the base change of $T$ to the algebraic closure $\ol{\kappa}$, where $\kappa$ denotes the base field of $T$. 
This completes the proof of Step \ref{s2 uniqueness Mukai bdl1}. 
\end{proof}

\begin{step}\label{s3 uniqueness Mukai bdl1}
$H^0(X, \cU_1^\vee(K_X))=0$, $h^0(X, \cU_1^\vee) =r+s$, and 
$\cU_1^{\perp}$ is $\mu$-stable. 
\end{step}

\begin{proof}[Proof of Step \ref{s3 uniqueness Mukai bdl1}]
We have $\Hom(\cU_1, \MO_X(K_X))=0$, 
because 
$\cU_1$ and $\MO_X(K_X)$ are $\mu$-stable and 
\[
\mu_{-K_X}(\cU_1)
=
-\frac{(-K_X)^3}{r}
>
-(-K_X)^3
=
\mu_{-K_X}(\MO_X(K_X)).
\]
Therefore, $H^0(X, \cU_1^\vee(K_X)) \simeq \Hom(\cU_1, \MO_X(K_X)) =0$. 
This, together with 
$h^0(S, \cU_1^\vee|_S) =h^0(S, \cU_2^\vee|_S) =h^0(X, \cU^\vee_2) =r+s$ and the exact sequence 
\[
0 \to \cU_1^\vee(K_X) \to \cU_1^\vee \to \cU_1^\vee|_S \to 0, 
\]
implies $h^0(X,\cU_1^\vee)\leq r+s$.

The exact sequence $0 \to
f^*\cQ^\vee
\to
(\MO_X^{\oplus(r+s)})^\vee \xrightarrow{\alpha}
f^*\cU^\vee
\to
0$ induces 
a linear map $H^0(\alpha): 
(k^{\oplus(r+s)})^\vee
\to
H^0(X,\cU_1^\vee)$. 
Note that $H^0(\alpha)$ is injective, as otherwise 
$f(X)$ would be contained in the sub-Grassmannian
\[
\Gr(r, \Ker(\tau))\subset\Gr(r,r+s)
\]
defined by a nonzero element 
$\tau \in 
(k^{\oplus(r+s)})^\vee$ satisfying $H^0(\alpha)(\tau)=0$. 
Then it holds that $h^0(X,\cU_1^\vee)\geq r+s$. 
Therefore,  $h^0(X,\cU_1^\vee) = r+s$ and $(f^*\cQ)^\vee \simeq \cU_1^{\perp}$. 
Thus $\cU_1^{\perp}$ is $\mu$-stable. 
This completes the proof of Step \ref{s3 uniqueness Mukai bdl1}. 
\end{proof}

\begin{step}\label{s4 uniqueness Mukai bdl1}
$\cU_1 \simeq \cU_2$. 
\end{step}

\begin{proof}[Proof of Step \ref{s4 uniqueness Mukai bdl1}]
By standard argument, we may replace $X$ by the base change to a larger algebraically closed field. 
Then we may assume that there exists a smooth member $S \in |-K_X|$ satisfying $\cU_1|_S \simeq \cU_2|_S$ (Step \ref{s2 uniqueness Mukai bdl1}). 
This, together with Step \ref{s3 uniqueness Mukai bdl1}, implies that 
Proposition \ref{p uniqueness Mukai bdl0} is applicable, so that 
$\cU_1 \simeq \cU_2$. 
This completes the proof of Step \ref{s4 uniqueness Mukai bdl1}. 
\end{proof}
Step \ref{s4 uniqueness Mukai bdl1} completes the proof of 
Proposition \ref{p uniqueness Mukai bdl1}. 
\end{proof}

\bibliographystyle{skalpha}
\bibliography{reference.bib}

@article{BKM24,
  author =        {Bayer, Arend and Kuznetsov, Alexander and Macrì, Emanuele},
  journal =       {arXiv:2402.07154},
  title =         {Mukai bundles on {F}ano threefolds},
  year =          {2024},
}

@article{BKM25,
  author =        {Bayer, Arend and Kuznetsov, Alexander and Macrì, Emanuele},
  journal =       {arXiv:2501.16157},
  title =         {Mukai models of {F}ano varieties},
  year =          {2025},
}

@book {CTS21,
    AUTHOR = {Colliot-Th\'{e}l\`ene, Jean-Louis and Skorobogatov, Alexei N.},
     TITLE = {The {B}rauer-{G}rothendieck group},
    SERIES = {Ergebnisse der Mathematik und ihrer Grenzgebiete. 3. Folge. A
              Series of Modern Surveys in Mathematics [Results in
              Mathematics and Related Areas. 3rd Series. A Series of Modern
              Surveys in Mathematics]},
    VOLUME = {71},
 PUBLISHER = {Springer, Cham},
      YEAR = {2021},
     PAGES = {xv+453},
      ISBN = {978-3-030-74247-8; 978-3-030-74248-5},
   MRCLASS = {14F22 (14E08 14G05 14G12 14K05)},
  MRNUMBER = {4304038},
MRREVIEWER = {Thomas Benedict Williams},
       DOI = {10.1007/978-3-030-74248-5},
       URL = {https://doi-org.utokyo.idm.oclc.org/10.1007/978-3-030-74248-5},
}

@article{DPS94,
  author  = {Demailly, Jean-Pierre and Peternell, Thomas and Schneider, Michael},
  title   = {Compact complex manifolds with numerically effective tangent bundles},
  journal = {J. Algebraic Geom.},
  volume  = {3},
  number  = {2},
  year    = {1994},
  pages   = {295--345},
}

@book {Fri98,
    AUTHOR = {Friedman, Robert},
     TITLE = {Algebraic surfaces and holomorphic vector bundles},
    SERIES = {Universitext},
 PUBLISHER = {Springer-Verlag, New York},
      YEAR = {1998},
     PAGES = {x+328},
      ISBN = {0-387-98361-9},
   MRCLASS = {14J60 (14-01 14J15 32J15 57R55)},
  MRNUMBER = {1600388},
MRREVIEWER = {I.\ Dolgachev},
       DOI = {10.1007/978-1-4612-1688-9},
       URL = {https://doi-org.kyoto-u.idm.oclc.org/10.1007/978-1-4612-1688-9},
}

@book {Har77,
    AUTHOR = {Hartshorne, Robin},
     TITLE = {Algebraic geometry},
    SERIES = {Graduate Texts in Mathematics, No. 52},
 PUBLISHER = {Springer-Verlag, New York-Heidelberg},
      YEAR = {1977},
     PAGES = {xvi+496},
      ISBN = {0-387-90244-9},
   MRCLASS = {14-01},
  MRNUMBER = {0463157},
MRREVIEWER = {Robert Speiser},
}

@article {Isk77,
    AUTHOR = {Iskovskih, V. A.},
     TITLE = {Fano threefolds. {I}},
   JOURNAL = {Izv. Akad. Nauk SSSR Ser. Mat.},
  FJOURNAL = {Izvestiya Akademii Nauk SSSR. Seriya Matematicheskaya},
    VOLUME = {41},
      YEAR = {1977},
    NUMBER = {3},
     PAGES = {516--562, 717},
      ISSN = {0373-2436},
   MRCLASS = {14J10 (14M20 14N05)},
  MRNUMBER = {463151},
MRREVIEWER = {Miles Reid},
}

@article {Isk78,
    AUTHOR = {Iskovskih, V. A.},
     TITLE = {Fano threefolds. {II}},
   JOURNAL = {Izv. Akad. Nauk SSSR Ser. Mat.},
  FJOURNAL = {Izvestiya Akademii Nauk SSSR. Seriya Matematicheskaya},
    VOLUME = {42},
      YEAR = {1978},
    NUMBER = {3},
     PAGES = {506--549},
      ISSN = {0373-2436},
   MRCLASS = {14J10 (14J30 14M20 14N05)},
  MRNUMBER = {503430},
MRREVIEWER = {Miles Reid},
}

@incollection {IP99,
    AUTHOR = {Iskovskikh, V. A. and Prokhorov, Yu. G.},
     TITLE = {Fano varieties},
 BOOKTITLE = {Algebraic geometry, {V}},
    SERIES = {Encyclopaedia Math. Sci.},
    VOLUME = {47},
     PAGES = {1--247},
 PUBLISHER = {Springer, Berlin},
      YEAR = {1999},
   MRCLASS = {14J45 (14E07 14F22 14K30)},
  MRNUMBER = {1668579},
MRREVIEWER = {Takao Fujita},
}

@article{KT-primitive,
  author =        {Kanemitsu, Akihiro and Tanaka, Hiromu},
  journal =       {preprint},
  title =         {Prime Fano threefolds in positive characteristic},
  year =          {2026},
}

@article {KTTWYY1,
    AUTHOR = {Kawakami, Tatsuro and Takamatsu, Teppei and Tanaka, Hiromu and
              Witaszek, Jakub and Yobuko, Fuetaro and Yoshikawa, Shou},
     TITLE = {Quasi-{$F$}-splittings in birational geometry},
   JOURNAL = {Ann. Sci. \'Ec. Norm. Sup\'er. (4)},
  FJOURNAL = {Annales Scientifiques de l'\'Ecole Normale Sup\'erieure.
              Quatri\`eme S\'erie},
    VOLUME = {58},
      YEAR = {2025},
    NUMBER = {3},
     PAGES = {665--748},
      ISSN = {0012-9593,1873-2151},
   MRCLASS = {14E05 (13A35 14G17)},
  MRNUMBER = {4962159},
}

@article{KTY-Fedder2,
  author =        {Kawakami, Tatsuro and Takamatsu, Teppei and Yoshikawa, Shou},
  journal =       {arXiv:2511.17270},
  title =         {Fedder type criteria for quasi-{F}-splitting {II}},
  year =          {2025},
}

@article{KTLift1,
  author =        {Kawakami, Tatsuro and Tanaka, Hiromu},
  journal =       {preprint available at arXiv:2503.10236v1},
  title =         {Liftability and vanishing theorems for {F}ano threefolds in positive characteristic {I}},
  year =          {2025},
}

@article {CS24,
    AUTHOR = {\v{C}esnavi\v{c}ius, K\k{e}stutiss and Scholze, Peter},
     TITLE = {Purity for flat cohomology},
   JOURNAL = {Ann. of Math. (2)},
  FJOURNAL = {Annals of Mathematics. Second Series},
    VOLUME = {199},
      YEAR = {2024},
    NUMBER = {1},
     PAGES = {51--180},
      ISSN = {0003-486X,1939-8980},
   MRCLASS = {14F20 (14F22 14F30 14H20 18G90)},
  MRNUMBER = {4681144},
MRREVIEWER = {Shizhang\ Li},
       DOI = {10.4007/annals.2024.199.1.2},
       URL = {https://doi-org.kyoto-u.idm.oclc.org/10.4007/annals.2024.199.1.2},
}

@book {Kol13,
    AUTHOR = {Koll\'{a}r, J\'{a}nos},
     TITLE = {Singularities of the minimal model program},
    SERIES = {Cambridge Tracts in Mathematics},
    VOLUME = {200},
      NOTE = {With a collaboration of S\'{a}ndor Kov\'{a}cs},
 PUBLISHER = {Cambridge University Press, Cambridge},
      YEAR = {2013},
     PAGES = {x+370},
      ISBN = {978-1-107-03534-8},
   MRCLASS = {14E30 (14B05)},
  MRNUMBER = {3057950},
MRREVIEWER = {Tommaso De Fernex},
       DOI = {10.1017/CBO9781139547895},
       URL = {https://doi-org.utokyo.idm.oclc.org/10.1017/CBO9781139547895},
}

@book{Lam01,
  author    = {Lam, T. Y.},
  title     = {A First Course in Noncommutative Rings},
  series    = {Graduate Texts in Mathematics},
  volume    = {131},
  edition   = {Second},
  publisher = {Springer},
  year      = {2001}
}

@article {Laz86,
    AUTHOR = {Lazarsfeld, Robert},
     TITLE = {Brill-{N}oether-{P}etri without degenerations},
   JOURNAL = {J. Differential Geom.},
  FJOURNAL = {Journal of Differential Geometry},
    VOLUME = {23},
      YEAR = {1986},
    NUMBER = {3},
     PAGES = {299--307},
      ISSN = {0022-040X,1945-743X},
   MRCLASS = {14H10 (14J28)},
  MRNUMBER = {852158},
MRREVIEWER = {Ziv\ Ran},
       URL = {http://projecteuclid.org.kyoto-u.idm.oclc.org/euclid.jdg/1214440116},
}

@article {MS21,
    AUTHOR = {Ma, Linquan and Schwede, Karl},
     TITLE = {Singularities in mixed characteristic via perfectoid big
              {C}ohen-{M}acaulay algebras},
   JOURNAL = {Duke Math. J.},
  FJOURNAL = {Duke Mathematical Journal},
    VOLUME = {170},
      YEAR = {2021},
    NUMBER = {13},
     PAGES = {2815--2890},
      ISSN = {0012-7094,1547-7398},
   MRCLASS = {14G45 (13A35 14B05 14D10 14F18)},
  MRNUMBER = {4312190},
MRREVIEWER = {Ana\ Bravo},
       DOI = {10.1215/00127094-2020-0082},
       URL = {https://doi-org.kyoto-u.idm.oclc.org/10.1215/00127094-2020-0082},
}

@article {MSTWW,
    AUTHOR = {Ma, Linquan and Schwede, Karl and Tucker, Kevin and Waldron,
              Joe and Witaszek, Jakub},
     TITLE = {An analogue of adjoint ideals and {PLT} singularities in mixed
              characteristic},
   JOURNAL = {J. Algebraic Geom.},
  FJOURNAL = {Journal of Algebraic Geometry},
    VOLUME = {31},
      YEAR = {2022},
    NUMBER = {3},
     PAGES = {497--559},
      ISSN = {1056-3911,1534-7486},
   MRCLASS = {14G45 (13A35)},
  MRNUMBER = {4484548},
MRREVIEWER = {Sotiris\ Karanikolopoulos},
       DOI = {10.1090/jag/797},
       URL = {https://doi-org.kyoto-u.idm.oclc.org/10.1090/jag/797},
}

@article {MM81,
    AUTHOR = {Mori, Shigefumi and Mukai, Shigeru},
     TITLE = {Classification of {F}ano {$3$}-folds with {$B_{2}\geq 2$}},
   JOURNAL = {Manuscripta Math.},
  FJOURNAL = {Manuscripta Mathematica},
    VOLUME = {36},
      YEAR = {1981/82},
    NUMBER = {2},
     PAGES = {147--162},
      ISSN = {0025-2611},
   MRCLASS = {14J30 (14J10)},
  MRNUMBER = {641971},
MRREVIEWER = {Mary Schaps},
       DOI = {10.1007/BF01170131},
       URL = {https://doi.org/10.1007/BF01170131},
}

@incollection {MM83,
    AUTHOR = {Mori, Shigefumi and Mukai, Shigeru},
     TITLE = {On {F}ano {$3$}-folds with {$B_{2}\geq 2$}},
 BOOKTITLE = {Algebraic varieties and analytic varieties ({T}okyo, 1981)},
    SERIES = {Adv. Stud. Pure Math.},
    VOLUME = {1},
     PAGES = {101--129},
 PUBLISHER = {North-Holland, Amsterdam},
      YEAR = {1983},
   MRCLASS = {14J30},
  MRNUMBER = {715648},
MRREVIEWER = {I. Dolgachev},
       DOI = {10.2969/aspm/00110101},
       URL = {https://doi.org/10.2969/aspm/00110101},
}

@article {MM03,
    AUTHOR = {Mori, Shigefumi and Mukai, Shigeru},
     TITLE = {Erratum: ``{C}lassification of {F}ano 3-folds with {$B_2\geq
              2$}'' [{M}anuscripta {M}ath. {\bf 36} (1981/82), no. 2,
              147--162; {MR}0641971 (83f:14032)]},
   JOURNAL = {Manuscripta Math.},
  FJOURNAL = {Manuscripta Mathematica},
    VOLUME = {110},
      YEAR = {2003},
    NUMBER = {3},
     PAGES = {407},
      ISSN = {0025-2611},
   MRCLASS = {14J45 (14E30 14J30)},
  MRNUMBER = {1969009},
       DOI = {10.1007/s00229-002-0336-2},
       URL = {https://doi.org/10.1007/s00229-002-0336-2},
}

@article {Muk89,
    AUTHOR = {Mukai, Shigeru},
     TITLE = {Biregular classification of {F}ano {$3$}-folds and {F}ano
              manifolds of coindex {$3$}},
   JOURNAL = {Proc. Nat. Acad. Sci. U.S.A.},
  FJOURNAL = {Proceedings of the National Academy of Sciences of the United
              States of America},
    VOLUME = {86},
      YEAR = {1989},
    NUMBER = {9},
     PAGES = {3000--3002},
      ISSN = {0027-8424},
   MRCLASS = {14J30 (14J35 14J40)},
  MRNUMBER = {995400},
MRREVIEWER = {A. S. Tikhomirov},
       DOI = {10.1073/pnas.86.9.3000},
       URL = {https://doi-org.utokyo.idm.oclc.org/10.1073/pnas.86.9.3000},
}

@article {Muk10,
    AUTHOR = {Mukai, Shigeru},
     TITLE = {Curves and symmetric spaces, {II}},
   JOURNAL = {Ann. of Math. (2)},
  FJOURNAL = {Annals of Mathematics. Second Series},
    VOLUME = {172},
      YEAR = {2010},
    NUMBER = {3},
     PAGES = {1539--1558},
      ISSN = {0003-486X,1939-8980},
   MRCLASS = {14H45 (14C20 14H51 14M15)},
  MRNUMBER = {2726093},
MRREVIEWER = {Raquel\ Mallavibarrena},
       DOI = {10.4007/annals.2010.172.1539},
       URL = {https://doi-org.kyoto-u.idm.oclc.org/10.4007/annals.2010.172.1539},
}

@article{Sha,
  author =        {Shatova, Irina},
  journal =       {arXiv:2601.14709v1},
  title =         {Brill--Noether generality of curves and K3 surfaces},
  year =          {2026},
}

@article {Sho79a,
    AUTHOR = {\v{S}okurov, V. V.},
     TITLE = {The existence of a line on {F}ano varieties},
   JOURNAL = {Izv. Akad. Nauk SSSR Ser. Mat.},
  FJOURNAL = {Izvestiya Akademii Nauk SSSR. Seriya Matematicheskaya},
    VOLUME = {43},
      YEAR = {1979},
    NUMBER = {4},
     PAGES = {922--964, 968},
      ISSN = {0373-2436},
   MRCLASS = {14J30 (14M20)},
  MRNUMBER = {548510},
MRREVIEWER = {Miles Reid},
}

@article {Sho79b,
    AUTHOR = {\v{S}okurov, V. V.},
     TITLE = {Smoothness of a general anticanonical divisor on a {F}ano
              variety},
   JOURNAL = {Izv. Akad. Nauk SSSR Ser. Mat.},
  FJOURNAL = {Izvestiya Akademii Nauk SSSR. Seriya Matematicheskaya},
    VOLUME = {43},
      YEAR = {1979},
    NUMBER = {2},
     PAGES = {430--441},
      ISSN = {0373-2436},
   MRCLASS = {14J30},
  MRNUMBER = {534602},
MRREVIEWER = {Werner Kleinert},
}

@misc{SP,
  author =        {{The} {Stacks Project Authors}},
  howpublished =  {\url{http://stacks.math.columbia.edu}},
  title =         {\itshape {S}tacks {P}roject},
}

@article {TY23,
    AUTHOR = {Takamatsu, Teppei and Yoshikawa, Shou},
     TITLE = {Minimal model program for semi-stable threefolds in mixed
              characteristic},
   JOURNAL = {J. Algebraic Geom.},
  FJOURNAL = {Journal of Algebraic Geometry},
    VOLUME = {32},
      YEAR = {2023},
    NUMBER = {3},
     PAGES = {429--476},
      ISSN = {1056-3911,1534-7486},
   MRCLASS = {14E30 (14G45)},
  MRNUMBER = {4622257},
MRREVIEWER = {Sotiris\ Karanikolopoulos},
       DOI = {10.1090/jag/813},
       URL = {https://doi-org.kyoto-u.idm.oclc.org/10.1090/jag/813},
}

@article {Tak89,
    AUTHOR = {Takeuchi, Kiyohiko},
     TITLE = {Some birational maps of {F}ano {$3$}-folds},
   JOURNAL = {Compositio Math.},
  FJOURNAL = {Compositio Mathematica},
    VOLUME = {71},
      YEAR = {1989},
    NUMBER = {3},
     PAGES = {265--283},
      ISSN = {0010-437X},
   MRCLASS = {14J30 (14E05)},
  MRNUMBER = {1022045},
MRREVIEWER = {Peter Nielsen},
       URL = {http://www.numdam.org/item?id=CM_1989__71_3_265_0},
}

@article {Tan18b,
    AUTHOR = {Tanaka, Hiromu},
     TITLE = {Behavior of canonical divisors under purely inseparable base
              changes},
   JOURNAL = {J. Reine Angew. Math.},
  FJOURNAL = {Journal f\"{u}r die Reine und Angewandte Mathematik. [Crelle's
              Journal]},
    VOLUME = {744},
      YEAR = {2018},
     PAGES = {237--264},
      ISSN = {0075-4102},
   MRCLASS = {14E30},
  MRNUMBER = {3871445},
MRREVIEWER = {Sung Rak Choi},
       DOI = {10.1515/crelle-2015-0111},
       URL = {https://doi.org/10.1515/crelle-2015-0111},
}

@article {Tan24,
    AUTHOR = {Tanaka, Hiromu},
     TITLE = {Bertini theorems admitting base changes},
   JOURNAL = {J. Algebra},
  FJOURNAL = {Journal of Algebra},
    VOLUME = {644},
      YEAR = {2024},
     PAGES = {64--125},
      ISSN = {0021-8693,1090-266X},
   MRCLASS = {14G17 (14D06)},
  MRNUMBER = {4695615},
MRREVIEWER = {Lei\ Zhang},
       DOI = {10.1016/j.jalgebra.2023.12.038},
       URL = {https://doi-org.kyoto-u.idm.oclc.org/10.1016/j.jalgebra.2023.12.038},
}

@article{FanoI,
  author =        {Tanaka, Hiromu},
  journal =       {arXiv:2308.08121},
  title =         {Fano threefolds in positive characteristic {I}},
  year =          {2023},
}

@article{FanoII,
  author =        {Tanaka, Hiromu},
  journal =       {arXiv:2308.08122},
  title =         {Fano threefolds in positive characteristic {II}},
  year =          {2023},
}

@article{FanoIII,
  author =        {Asai, Masaya and Tanaka, Hiromu},
  journal =       {arXiv:2308.08124},
  title =         {Fano threefolds in positive characteristic {III}},
  year =          {2023},
}

@article{FanoIV,
  author =        {Tanaka, Hiromu},
  journal =       {arXiv:2308.08127},
  title =         {Fano threefolds in positive characteristic {IV}},
  year =          {2023},
}

@article {Yob19,
    AUTHOR = {Yobuko, Fuetaro},
     TITLE = {Quasi-{F}robenius splitting and lifting of {C}alabi-{Y}au
              varieties in characteristic {$p$}},
   JOURNAL = {Math. Z.},
  FJOURNAL = {Mathematische Zeitschrift},
    VOLUME = {292},
      YEAR = {2019},
    NUMBER = {1-2},
     PAGES = {307--316},
      ISSN = {0025-5874,1432-1823},
   MRCLASS = {14J32 (13F35 14G17)},
  MRNUMBER = {3968903},
MRREVIEWER = {Tyler\ L.\ Kelly},
       DOI = {10.1007/s00209-018-2198-7},
       URL = {https://doi-org.kyoto-u.idm.oclc.org/10.1007/s00209-018-2198-7},
}

\end{document}